\documentclass{article}

\usepackage[english]{babel}

\usepackage[letterpaper,top=2cm,bottom=2cm,left=3cm,right=3cm,marginparwidth=1.75cm]{geometry}

\usepackage{amsmath, amssymb}
\usepackage{graphicx}
\usepackage[colorlinks=true, allcolors=blue]{hyperref}

\usepackage{enumerate} % format of items in 'enumerate'
\usepackage{amsfonts}
\usepackage{amsthm}

\usepackage{xcolor} % colors

\usepackage{soul} % \sp
\usepackage{scrextend}

\newtheorem{theorem}{Theorem}
\newtheorem{lemma}[theorem]{Lemma}
\newtheorem{corollary}[theorem]{Corollary}
\newtheorem{claim}[theorem]{\emph{Claim}}
\newtheorem{definition}[theorem]{Definition}
\newtheorem{observation}[theorem]{Observation}
\newtheorem{conj}[theorem]{Conjecture}
\newtheorem{question}[theorem]{Question}

\newenvironment{subproof}{%
  \begin{addmargin}[1em]{0em}\begin{proof}[Proof of claim]%
}{%
  \end{proof}\end{addmargin}\medskip%
}

\renewcommand{\nsim}{\not\sim} % not adjacency relation
\newcommand{\dom}{\text{dom}}

\newcommand{\numsucc}{\text{succ}}

\newcommand{\ex}[1]{\mathbb{E}[#1]}
\renewcommand{\Pr}[1]{\textup{\textbf{Pr}}\left[#1\right]}

\newcommand{\join}{\vee} % complete joint
\newcommand{\minusjedna}{\deg^-} % degree minus 1
\newcommand{\chidp}[1]{\chi_{DP}(#1)} % correspondence chromatic number
\newcommand{\chidpu}{\chi_{DP}}
\newcommand{\exc}{\text{exc}}

\newcommand{\CC}{\mathcal{C}}
\newcommand{\DD}{\mathcal{D}}
\newcommand{\WW}{\mathcal{W}}
\newcommand{\KK}{\mathcal{K}}
\newcommand{\EE}{\mathcal{E}}

\newcommand{\nat}{\mathbb{N}}

\newcommand{\thebound}{3\cdot 10^9}

\title{On Borodin-Kostochka conjecture for correspondence coloring\footnote{An extended abstract of this paper appeared at Eurocomb'25.}}
\author{Zdeněk Dvořák\thanks{Charles University, Prague, Czech Republic.
	\href{mailto:rakdver@iuuk.mff.cuni.cz}{rakdver@iuuk.mff.cuni.cz}.
	Supported by ERC-CZ project LL2328 (Beyond the Four Color Theorem) of the Ministry of Education of Czech Republic.} 
\and Ross J. Kang\thanks{Korteweg--de Vries Institute for Mathematics, University of Amsterdam, the Netherlands. 
    \href{mailto:r.kang@uva.nl}{r.kang@uva.nl} 
    Partially supported by grant OCENW.M20.009 of the Dutch Research Council (NWO) and the Gravitation Programme NETWORKS (024.002.003) of the Dutch Ministry of Education, Culture and Science (OCW).}
\and David Mikšaník\thanks{Charles University, Prague, Czech Republic.
	\href{mailto:miksanik@iuuk.mff.cuni.cz}{miksanik@iuuk.mff.cuni.cz}.
    Supported by the the project GAUK 40324 of the Charles University Grant Agency.}}
\date{}

\begin{document}
\maketitle

\begin{abstract} 
Borodin and Kostochka in 1977 conjectured that if a graph $G$ has maximum
degree $\Delta(G)\ge 9$ and its clique number satisfies $\omega(G)\le
\Delta(G)-1$, then its chromatic number satisfies $\chi(G) \le \Delta(G)-1$. We
prove this statement with respect to a stronger graph coloring parameter, the
correspondence chromatic number $\chi_{DP}$, provided the maximum degree is
sufficiently large. More precisely, we prove that for every integer $\Delta\ge \thebound$, a graph $G$ of maximum degree
at most $\Delta$ satisfies $\chi_{DP}(G) \le \max(\omega(G),\Delta-1)$.
This strengthens earlier results of Reed (1999) for usual chromatic number
and of Choi, Kierstead and Rabern (2023) for list chromatic number.
\end{abstract}

%===================================================================

% Introduction
%\input{sections/01_eurocomb}
%\input{sections/01}

\section{Introduction} \label{section_01}

Here is a common refrain in combinatorics: \emph{How do local constraints influence global structure?}
A classic result of this flavour in graph coloring is the celebrated theorem of Brooks~\cite{Bro41}, which implies that for any $\Delta\ge 3$
and for any graph $G$ of maximum degree at most $\Delta$, if $\omega(G) \leq \Delta$, then the graph~$G$ is $\Delta$-colorable.\footnote{We use $\delta(G)$,
$\Delta(G)$, $\omega(G)$, and $\chi(G)$ to denote the \emph{minimum degree}, the \emph{maximum degree}, the \emph{clique number}, and the \emph{chromatic number} of the graph $G$,
respectively.}
In other words, a local assumption ($\omega(G) \leq \Delta$) gives a bound on the global structure ($\chi(G) \leq \Delta$)
improved over what holds in general without this assumption ($\chi(G) \leq \Delta + 1$). Since $\omega(G) \leq \chi(G)$, this bound on $\chi(G)$ is the best we can obtain
based only on this assumption.\footnote{We remark that $\Delta\ge 3$ is needed to exclude
odd cycles.  Of course, we could replace this condition by including odd cycles as a special case in the outcome.}

Vizing \cite{Viz68} asked for other extremal bounds on $\chi(G)$ given stronger assumptions on $\omega(G)$.
Already the most restrictive case $\omega(G)\le 2$ (i.e., the case of triangle-free graphs) is rather challenging
and has attracted a lot of attention, see e.g.~\cite{BK77,Cat78,Law78,Joh96+a,Mol19,Ber19,DJKP20,HuPi23,KaRo23+}.
For results on other regimes of $\omega(G)$ in terms of the maximum degree $\Delta(G)$, see e.g.~\cite{Joh96+b,BKNP22,DKPS20+,HJK22}.
Our focus in this paper is the other non-trivial extreme case of Vizing's problem, i.e., the case that $\omega(G) = \Delta(G) - 1$.
The most important topic in this case is Borodin-Kostochka Conjecture~\cite{BK77}.

\begin{conj}[Borodin and Kostochka~\cite{BK77}] \label{conj__bk_conjecture}
    For every graph $G$ such that $\Delta(G) \geq 9$, if $\omega(G) \leq \Delta(G) - 1$, then $\chi(G) \leq \Delta(G) - 1$.
\end{conj}

\noindent
The graph $C_5 \boxtimes K_3$ (see Figure~\ref{fig__strong_products}), i.e., the strong product of the cycle on five vertices with the triangle,
is $8$-regular and has clique number $6$, yet its chromatic number is $8$ (since $|V(G)|=15$ and every independent set in $G$ has size at most two).
This shows that the assumption $\Delta(G) \geq 9$ in this conjecture is necessary.
The most substantial progress towards resolving Borodin-Kostochka Conjecture is due to Reed~\cite{Ree99} who showed that a (much) larger lower bound assumption on $\Delta(G)$ is sufficient.

\begin{figure}
\centering
    \includegraphics[scale=0.7]{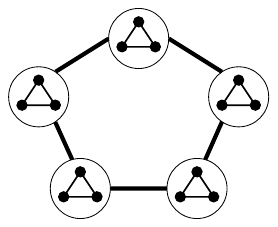}
    \caption{The graph $C_5 \boxtimes K_3$.}
    \label{fig__strong_products}
\end{figure}

\begin{theorem}[Reed~\cite{Ree99}] \label{thm__bk_conjecture_is_true_asym_for_coloring}
    For every graph $G$ such that $\Delta(G) \geq 10^{14}$, if $\omega(G) \leq \Delta(G) - 1$, then $\chi(G) \leq \Delta(G) - 1$.
\end{theorem}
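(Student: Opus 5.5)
We would prove Theorem~\ref{thm__bk_conjecture_is_true_asym_for_coloring} by minimal counterexample, using the probabilistic method together with a dense decomposition. Let $\Delta \ge 10^{14}$ and let $G$ be a counterexample with $\Delta(G)\le\Delta$, $\omega(G)\le \Delta-1$, $\chi(G)\ge\Delta$, and $|V(G)|$ minimum. The first step collects reducible configurations. Every vertex has degree at least $\Delta-1$, since a vertex of smaller degree could be removed and recolored greedily. More importantly, $G$ contains no clique $K$ on $\Delta-1$ vertices such that some vertex outside $K$ has many neighbours in $K$. Near-cliques of this kind are handled by Kostochka-type swapping arguments, in which one precolors $G-K$ and then extends to $K$ via a matching argument (Hall's theorem) on the lists $\{1,\dots,\Delta-1\}$ minus the colors of outside neighbours. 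The second step applies the sparse/dense decomposition of Reed (as in the $\omega,\Delta,\chi$ paper). Fix a small $d$, say $d=\Delta/10^{5}$. Then $V(G)$ partitions into $S$, consisting of vertices whose neighbourhoods span at most $\binom{\Delta}{2}-d\Delta$ edges, and dense sets $D_1,\dots,D_k$. Each $D_i$ has size $\Delta\pm O(d)$, most vertices of $D_i$ have almost all their neighbours inside $D_i$, and there are few edges leaving $D_i$.

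The third step is a semi-random coloring with $\Delta-1$ colors. Each vertex receives a random color, and conflicts are uncolored, using the naive coloring procedure. For the dense sets, each $D_i$ is instead colored by a random injective partial coloring (a random matching of colors) that deliberately creates repeated colors in neighbourhoods. Using the Lov\'asz Local Lemma with Talagrand/Azuma concentration, we will show that after this phase every sparse vertex $v$ sees at least two colors repeated in $N(v)$. This is because $N(v)$ is missing $d\Delta$ edges, so it has $\Omega(d^2)$ expected repeats. Each sparse vertex therefore has strictly more available colors than uncolored neighbours, so it can be finished greedily at the end.

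The main obstacle is the dense sets. The condition $\omega\le\Delta-1$ must be used there to gain the single saved color, because $D_i$ may be a near-clique of size around $\Delta$. Here we would use the configuration analysis from the first step. Since $G$ contains no $K_\Delta$ and no reducible near-clique, either $D_i$ contains two non-adjacent vertices $u,w$ with a common high-degree neighbour, or many vertices of $D_i$ have outside neighbours. In the first case we color $u,w$ the same in the random phase. In the second case we expect outside neighbours to reuse colors. We then complete $D_i$ by showing that the remaining uncolored vertices of $D_i$ (a set of size $O(d)$), with their residual lists, satisfy Hall's condition. The delicate part is making these events hold simultaneously for all $D_i$ through the Local Lemma while keeping the error terms below one color. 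This is why $\Delta$ must be astronomically large, and the constant $10^{14}$ comes out of these estimates.
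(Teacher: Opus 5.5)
Within the paper this statement is not proved directly. It is cited from Reed, and it is also a special case of Theorem~\ref{thm__bk_conjecture_is_true_asym_for_DP_coloring}, since $\chi(G)\le\chidp{G}$ and $10^{14}\ge 3\cdot 10^9$. Your outline follows Reed's original route instead: naive colouring, random injective colourings of dense sets, concentration plus the Local Lemma, and Hall at the end. That route can be made to work, but your proposal only asserts the step that carries nearly all the difficulty, namely finishing the dense sets. You need the event ``the leftover part of $D_i$ cannot be completed'' to have probability small against the inverse of the (polynomial in $\Delta$) dependency degree. It must also hold whatever happens when neighbouring dense sets are completed. Nothing in the proposal bounds this. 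Your dichotomy does not help: its first case is vacuous when $D_i$ is a clique, and one monochromatic non-adjacent pair yields only a single colour coincidence.

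The concrete missing ingredient is quantitative control of outside vertices that see most of a core. Take $D_i=C$, a clique of order $\Delta-1$. Its coloured part uses distinct colours, so the palette left for the uncoloured set $U\subseteq C$ has exactly $|U|$ colours, and Hall must hold with zero slack. Now let $y$ be an outside vertex adjacent to all of $C$ but one vertex. Its colour differs from every colour used on $C$ except possibly one, so it is typically a palette colour. If $y$ sees all of $U$, Hall fails, however other outside neighbours ``reuse colours''.

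What you need is the following:
\begin{itemize}
\item[(i)] A lemma that in a minimal counterexample an outside vertex has only $O(1)$ neighbours in a $(\Delta-1)$-core. The paper proves the bound $85$ (Lemma~\ref{lemma__no_bad_vertex_in_D-1_clique}), plus the fact that for smaller cores each core vertex has at most one tight neighbour (Lemma~\ref{lemma__no_two_vertices_can_see_together_many_vertices_in_max_clique}).
\item[(ii)] A probabilistic argument built on (i) showing each dense set can be finished with failure probability of order $\Delta^{-2}$. In the paper this comes from $\Omega(\Delta)$ pairwise disjoint benefactors per core (Theorem~\ref{thm__disjoint_triples}). These give two core vertices a spare colour, a property that survives any later extension, which also removes the ordering issue between adjacent dense sets that you leave open.
\end{itemize}

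Your first step states (i) only as ``many''. The method you indicate, precolouring $G-K$ and extending to $K$ by Hall, cannot prove it. In a counterexample \emph{no} colouring of $G-K$ extends to $K$, so the precolouring has to be engineered. The paper does this by adding edges among outside neighbours of $C$, using Corollary~\ref{cor__troublemaker_safe} to avoid creating a $K_\Delta$, and applying minimality to the modified graph (Claim~\ref{cl__nomatch}).

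A minor slip: $d\Delta$ non-edges in a sparse neighbourhood only give $\Omega(d)$ expected repeated colours, not $\Omega(d^2)$.
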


One way to approach Borodin-Kostochka Conjecture is by considering weaker statements.
As an example, Cranston and Rabern~\cite{CR15} showed the conclusion $\chi(G) \leq \Delta(G) - 1$ holds under the assumption that $\Delta(G) \geq 13$ and $\omega(G) \leq \Delta(G) - 3$.
Moreover, the conjecture has been confirmed for graphs from various restricted graph classes, see e.g.~\cite{Dhu21, CLLZ24, CR13_claw-free_graphs} for results of this form.

Another approach is to consider strengthened or more general versions of the conjecture,
e.g., to replace the chromatic number by a stronger coloring parameter.
For instance, one can consider \emph{list coloring}, the notion introduced independently by Erd{\H{o}}s, Rubin, and Taylor~\cite{ERT80} and by Vizing~\cite{Viz76}.
In this setting, each vertex has its own list of $k$ colors and we need to choose its color from its list.
The \emph{list chromatic number} $\chi_\ell(G)$ of a graph $G$ then is the minimum integer $k$ such that~$G$ can be properly colored from any such assignment of lists of size $k$
(see below for a more precise definition).  Clearly the list chromatic number of a graph is at least as large as its ordinary chromatic number,
which corresponds to the special case in which the lists must all be the same.  Thus, it is interesting that an analogue of Theorem~\ref{thm__bk_conjecture_is_true_asym_for_coloring}
holds in this more general setting as well.
\begin{theorem}[Choi, Kierstead, and Rabern~\cite{CKR23}] \label{thm__bk_conjecture_is_true_asym_for_list_coloring}
    For every graph $G$ such that $\Delta(G) \geq 10^{20}$, if $\omega(G) \leq \Delta(G) - 1$, then $\chi_\ell(G) \leq \Delta(G) - 1$.
\end{theorem}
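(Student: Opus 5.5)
The plan follows the Reed / Choi–Kierstead–Rabern strategy: study a minimal counterexample, show it must contain many dense near-cliques, then colour it in two phases. A random partial colouring handles the sparse part, and a careful extension argument handles the dense pieces.

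\emph{Minimal counterexample and local structure.} Let $G$ be a counterexample with $\Delta(G)=\Delta\ge 10^{20}$ and $\omega(G)\le\Delta-1$, minimizing $|V(G)|$. Fix a list assignment $L$ with $|L(v)|=\Delta-1$ for which $G$ is not $L$-colourable. By minimality, $G-v$ is $L$-colourable for every vertex $v$. Hence every vertex has degree at least $\Delta-1$: otherwise it has at most $\Delta-2$ coloured neighbours and can be coloured greedily. More strongly, $G$ contains no \emph{degree-choosable} configuration, such as small induced subgraphs in which each vertex $v$ has $|L(v)|$ exceeding its number of neighbours outside the subgraph, and which are not Gallai trees. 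Standard reducible configurations (as in Cranston–Rabern, and the $K_{\Delta-1}$-plus-neighbour lemmas) then give the following. A vertex lying in a clique of size about $\Delta-1$ sees that clique in a tightly controlled way, and no two vertices with $\Delta-2$ common neighbours can be handled cheaply.

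\emph{Dense decomposition.} I will use the Molloy–Reed dense decomposition with a parameter $d=\Delta^{1/2}$ or similar. It partitions $V(G)$ into dense sets $D_1,\dots,D_k$, where each $D_i$ has size $\Delta\pm O(d)$, few edges leaving it, and near-clique structure, together with a sparse remainder $S$. On $S$, each neighbourhood spans at most $\binom{\Delta}{2}-\Omega(\Delta d)$ edges.

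\emph{Random colouring phase.} I will apply the naive colouring procedure: each vertex picks a uniformly random colour from its list and keeps it if no neighbour picked the same colour. For the list setting, I will use equalizing coin flips so that retention probabilities are uniform. By Talagrand's inequality and the Lovász Local Lemma, the following holds with positive probability:
\begin{itemize}
\item every sparse vertex has $\Omega(d)$ colours repeated in its neighbourhood (or retained-colour savings), so it ends with slack;
\item for each dense set $D_i$, the outside neighbours that are coloured block only a bounded fraction of colours;
\item $D_i$ gets a few special pairs of nonadjacent vertices coloured the same, each pair saving a colour.
\end{itemize}
The sparse vertices are then finished last by greedy extension using the slack.

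\emph{Extending into the dense sets.} This is the main obstacle. Take $D_i$ to be a near-clique of size about $\Delta$ in which $\omega\le\Delta-1$ forces some missing edges. I need to colour $D_i$ from lists of size $\Delta-1$ reduced by the colours used on external neighbours. In the ordinary colouring setting, Reed colours a nonadjacent pair identically, which creates one saved colour and then allows a Hall-type matching. With lists, the two vertices of a nonadjacent pair need a common available colour, and an arbitrary list assignment may not provide one.

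My first attempt here is a case analysis on the number of non-edges inside $D_i$ and external edges:
\begin{itemize}
\item \emph{Many non-edges:} pick, in the random phase, a matching of $\Omega(d)$ nonadjacent pairs with common colours, then finish by Hall's theorem on a bipartite graph between uncoloured vertices and colours.
\item \emph{Very few non-edges:} $D_i$ is $K_{\Delta}$ minus a small set, with each vertex having at most one or two outside neighbours. Here I will use a Kierstead-style argument: the kernel and orientation method, or Alon–Tarsi-type counting, on the small defect, combined with the minimality reducible configurations. These configurations show that the outside neighbours of $D_i$ can be recoloured so as to free a colour.
\end{itemize}
The hardest part will be making these Hall-type and alternating-path arguments work uniformly when lists differ between vertices. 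I expect this to require a separate structural lemma about $(\Delta-1)$-list colouring of $K_\Delta$ minus a matching with pendant constraints.
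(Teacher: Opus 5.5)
Your overall plan (minimal counterexample, reducible configurations, sparse/dense decomposition, random partial colouring with the Local Lemma, then extension) follows the Reed / Choi--Kierstead--Rabern template. However, the proposal stops exactly where the theorem is hard. The extension into the dense parts is not proved; you only list candidate tools (Hall's theorem, kernels, Alon--Tarsi, alternating paths, an unstated ``structural lemma'').

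The mechanism you do sketch looks for savings mainly in non-edges inside $D_i$, which is the wrong place. In a minimal counterexample, the reducible configurations force each dense part to be a near-clique $D$ whose core $C$ is a clique of size $\Delta-p$ with $p$ small. This itself needs an argument that every neighbourhood contains either a clique of size $\Delta-\rho$ or $\Omega(\rho\Delta)$ non-edges, plus control of how large maximal cliques intersect; the Molloy--Reed decomposition alone does not give it. Two consequences follow:
\begin{itemize}
\item When $|C|=\Delta-1$, $D$ may be a clique with no internal non-edges, so there are no nonadjacent pairs of $D_i$ to colour alike.
\item Your ``very few non-edges'' case assumes one or two outside neighbours per vertex. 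This fails for cores with $p\ge 2$, whose degree-$\Delta$ vertices have $p+1$ outside neighbours.
\end{itemize}

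Once everything outside $C$ is coloured, a degree-$\Delta$ vertex $v\in C$ can have excess $-1$. So you must create positive excess at (say) two core vertices and colour them last. This can only come from outside neighbours $x$ of $v$ whose colour, together with the colour of some core vertex $a\not\sim x$, blocks at most one colour of $L(v)$. Measuring savings this way, a colour outside $L(v)$ also counts, which removes your worry that two lists need not share a colour. Your fallback, recolouring outside neighbours via reducible configurations, is not justified. Minimality colours $G-H$ for one configuration $H$ at a time. It does not give a simultaneous fix for all dense parts on top of the random colouring that the sparse vertices rely on.

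The paper obtains the stated theorem as an immediate corollary of its correspondence-colouring result, since $\chi_\ell\le\chi_{DP}$ and $10^{20}\ge 3\cdot 10^9$. Its proof supplies the two missing ingredients.

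\begin{itemize}
\item \emph{Attachment lemmas for cores.} Every vertex outside a core of size $\Delta-1$ has at most $85$ neighbours in it (Lemma~\ref{lemma__no_bad_vertex_in_D-1_clique}). Every core vertex has at most one neighbour with more than $7$ neighbours in $C$ (Lemma~\ref{lemma__no_two_vertices_can_see_together_many_vertices_in_max_clique}). Together these give $\Omega(\Delta)$ pairwise disjoint benefactors, namely sets $\{v\}\cup X_v$ with $X_v$ consisting of $2+\deg v-\Delta$ loose outside neighbours of $v\in C$ (Theorem~\ref{thm__disjoint_triples}).
\item \emph{A probabilistic estimate.} Each benefactor succeeds with constant probability: each $x\in X_v$ has $\Omega(\Delta)$ coloured non-neighbours in $C$, each offering an $\Omega(1/\Delta)$ chance of a saving pair. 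Hence at least two benefactors succeed with probability $1-O(\Delta^{-2})$, conditionally on all colours outside $C\cup\bigcup\mathcal{W}_C$. This conditional form is what the lopsided Local Lemma needs.
\end{itemize}

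The paper proves the second ingredient using a random transversal and resampling of a uniformly random colouring (Lemmas~\ref{lemma__can_free_vertex}--\ref{lemma__can_color_core}), rather than naive colouring and Talagrand's inequality. Some version of both steps has to appear in your argument; as it stands, the dense case is open.
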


We continue the investigation along these lines by considering an even stronger coloring
parameter, the \emph{correspondence chromatic number} $\chidpu$.
The main idea of correspondence coloring is to additionally generalize the way colors
of different vertices interact.  This notion was introduced by Dvořák and Postle~\cite{DP18} to solve a list
coloring problem for planar graphs.\footnote{Many papers use the terms \emph{DP-coloring} and \emph{DP-chromatic number}
based on their initials; we prefer to use the terminology they introduced, with the exception of the $\chidpu$
notation.}  This notion also sheds light on which coloring techniques and results are ``local'' (only dependent on
the fact that coloring a vertex prevents us from using at most a single specific color at each of its neighbors)
and which depend on a global coherence of the colors.

More precisely, for a graph $G$, a \emph{list assignment} is a function $L \colon V(G) \rightarrow 2^{\text{colors}}$
assigning to each vertex $v$ a list $L(v)$ of colors that it can use.  We say that $L$ is a \emph{$k$-list assignment} if $|L(v)| \geq k$ holds
for every $v \in V(G)$.  An \emph{$L$-coloring} of $G$ is a proper coloring $\psi$ such that $\psi(v)\in L(v)$ holds for every vertex $v\in V(G)$.
We say that the graph $G$ is \emph{$k$-list colorable} if $G$ has an $L$-coloring for every $k$-list assignment $L$;
and the \emph{list chromatic number} $\chi_\ell(G)$ of the graph $G$ is then defined as the minimum integer $k$ such that $G$ is $k$-list colorable.

In correspondence coloring, we additionally specify which colors in lists of adjacent vertices are in conflict.
Formally, a \emph{correspondence assignment} for $G$ is a pair $(L, \Pi)$, where
\begin{itemize}
\item $L$ is a list assignment for~$G$ and
\item $\Pi$ is a function assigning to each edge $e=uv \in E(G)$ a
(not necessarily perfect) matching $\Pi(e)$ between $\{u\} \times L(u)$ and $\{v\} \times L(v)$.
\end{itemize}
That is, for $e=uv \in E(G)$, $\Pi(e)$ is a set containing pairs $\{c,d\}$, where $c\in \{u\} \times L(u)$ and $d\in \{v\} \times L(v)$,
such that each element of $(\{u\} \times L(u))\cup(\{v\} \times L(v))$ belongs to at most one such pair in $\Pi(e)$.
We say that a correspondence assignment $(L, \Pi)$ is a \emph{$k$-correspondence assignment} if $L$ is a $k$-list assignment.
A function $\varphi$ assigning a color to each vertex of $G$ is an \emph{$(L, \Pi)$-coloring} if
\begin{itemize}
\item $\varphi(v) \in L(v)$ holds for every vertex $v \in V(G)$, and
\item every edge $e=uv\in E(G)$ satisfies $\{(u, \varphi(u)), (v, \varphi(v))\} \notin \Pi(e)$.
\end{itemize}
That is, the matching $\Pi(e)$ describes which colors at the endpoints $u$ and $v$ of the edge $e$ conflict, and~$\varphi$ must assign non-conflicting
colors to adjacent vertices.
We say that the graph $G$ is \emph{$(L, \Pi)$-colorable} if there exists an $(L, \Pi)$-coloring of $G$.
Furthermore, we say that $G$ is \emph{$k$-correspondence colorable} if~$G$ is $(L, \Pi)$-colorable for every $k$-correspondence assignment $(L, \Pi)$ of $G$.
Finally, the \emph{correspondence chromatic number} $\chidp{G}$ of $G$ is defined as the smallest integer $k$ such that $G$ is $k$-correspondence colorable.

For a correspondence assignment $(L, \Pi)$ and an edge $e=uv$, we say that $\Pi(e)$ is the \emph{identity matching} if $\Pi(e)=\{\{(u,\alpha),(v,\alpha)\}:\alpha\in L(u)\cap L(v)\}$.
In the special case that $\Pi$ assigns the identity matching to every edge, the $(L, \Pi)$-colorings are exactly the $L$-colorings of the graph.
If $L$ additionally assigns the same list to every vertex,  the $(L, \Pi)$-colorings are ordinary proper colorings.
This implies that
$$\chi(G)\le \chi_\ell(G)\le \chidp{G}$$
holds for every graph $G$.  Many of the techniques used to bound list chromatic number also
apply in the correspondence coloring setting; e.g., by coloring the vertices greedily one by one, it is easy to see that $$\chidp{G} \leq \Delta(G) + 1.$$

On the other hand, these coloring parameters diverge significantly already on bipartite graphs, i.e., for graphs $G$ with $\chi(G)=2$.
It is a classic result of Erd\H{o}s, Rubin, and Taylor~\cite{ERT80} that $\chi_\ell(K_{n,n}) \sim \log_2 n$ as $n\to\infty$,
and Alon and Krivelevich~\cite{AlKr98} conjectured that $\chi_\ell(G) = O(\log \Delta(G))$ holds for any bipartite graph $G$.
In contrast, \cite{KPV05,Ber16} observed that $\chidp{G} = \Omega(\Delta/\log \Delta)$ holds for \emph{every} $\Delta$-regular graph~$G$ (even for bipartite ones).

Thus, it is not {\em prima facie} intuitive if Theorems~\ref{thm__bk_conjecture_is_true_asym_for_coloring} and \ref{thm__bk_conjecture_is_true_asym_for_list_coloring}
should extend to the correspondence coloring setting.  As our main result, we prove that this is indeed the case.

\begin{theorem} \label{thm__bk_conjecture_is_true_asym_for_DP_coloring}
    For every integer $\Delta\ge \thebound$,
    if $G$ is a graph of maximum degree at most $\Delta$ and clique number at most $\Delta-1$,
    then $\chidp{G} \leq \Delta - 1$.
\end{theorem}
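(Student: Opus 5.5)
We argue by contradiction, following the overall strategy of Reed~\cite{Ree99}. Suppose the statement fails for some $\Delta\ge \thebound$, and take a counterexample $G$ with a $(\Delta-1)$-correspondence assignment $(L,\Pi)$ such that $G$ is not $(L,\Pi)$-colorable and $|V(G)|$ is minimum. Minimality gives $G$ standard critical-graph properties. Every vertex has degree exactly $\Delta$ or $\Delta-1$, since a vertex $v$ with $\deg(v)\le \Delta-2$ can be deleted, and an $(L,\Pi)$-coloring of $G-v$ then extends greedily to $v$. Moreover, any low-degree vertices can be colored last in a degree-choosability argument. The correspondence analogue of the Gallai tree theorem (the degree-choosability characterization for DP-coloring due to Bernshteyn, Kostochka and Pron) tells us that the subgraph induced by vertices of degree $\Delta-1$ is a ``DP-Gallai'' forest of blocks, each being a clique or a cycle. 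We also use the standard fact that a $(\Delta-1)$-clique $K$ in $G$ cannot have too many outside neighbours concentrated on few vertices; otherwise one could color $G-K$ and extend to $K$ by Hall's theorem applied to the leftover lists.

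The main body is a structural decomposition into dense and sparse parts. Fix a small $\varepsilon$, for instance $\varepsilon=10^{-3}$. We partition $V(G)$ into dense sets $D_1,\dots,D_k$ and a sparse remainder $S$, in the style of the Molloy--Reed dense decomposition. Each $D_i$ has size $\Delta\pm\varepsilon\Delta$, and each of its vertices has at most $\varepsilon\Delta$ neighbours outside $D_i$. Each vertex of $S$ has at least $\varepsilon^2\Delta^2/2$ non-edges in its neighbourhood. Because $\omega(G)\le\Delta-1$, the argument then refines to the following claim: every $D_i$ that is ``nearly a $K_\Delta$'' must contain either a non-edge or an external neighbour structure that can be exploited.

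The coloring proceeds in three stages. \textbf{Stage 1} is a random partial coloring, the ``naive coloring procedure'' adapted to correspondence assignments. For each vertex of $S$, and for a carefully chosen set of vertices inside each $D_i$ (for example, pairs of non-adjacent vertices in $D_i$ that can be made to ``share'' a color), we activate and color at random and then uncolor conflicts. For sparse vertices, the key quantity is the number of colors in $L(v)$ that are blocked by two or more neighbours. In the list setting this comes from neighbours receiving the same color. In the correspondence setting, the analogue is that many pairs of non-adjacent neighbours $u,w$ of $v$ receive colors whose matchings along $uv$ and $wv$ hit the same element of $L(v)$. This is exactly the local savings that makes the Molloy--Reed $\Delta-1$ argument local, and its expectation is $\Omega(\varepsilon^2\Delta)$. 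Talagrand's inequality together with the Lov\'asz Local Lemma shows that each sparse vertex ends with at least two more available colors than uncolored neighbours. \textbf{Stage 2} colors the dense sets. In the list setting one uses a repeated color on a non-edge inside $D_i$. For correspondence coloring this is replaced by choosing a non-adjacent pair $u,w\in D_i$ with a common neighbour $x$ and coloring $u,w$ so that both block the same color at $x$, or by coloring an outside neighbour to save a color. A random matching argument, again via the Local Lemma, finds such savings simultaneously for all $D_i$. We then finish $D_i$ with a Hall-type matching argument on the remaining lists. \textbf{Stage 3} colors the remaining sparse vertices greedily.

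We expect the main obstacle to be the near-clique case. This is a set $D_i$ of size $\Delta$ or $\Delta-1$ that is almost complete, where the non-edges inside are very few and external degrees are tiny, for instance a $K_\Delta$ minus a matching or a $K_{\Delta-1}$ with one extra vertex. Here the random savings are not available, and the correspondence matchings can be chosen adversarially, so that no two vertices ever ``agree''. Our first attempt would be a direct reducibility analysis. We would show that each such configuration either is excluded by minimality via an explicit extension lemma (color $G-D_i$, then use an alternating or Hall argument on $D_i$, exploiting one non-edge or one vertex of degree $\Delta-1$), or forces the existence of two near-cliques joined in a rigid way, which can then be handled jointly. The requirement $\Delta\ge\thebound$ enters only through the concentration estimates in Stages 1--2.
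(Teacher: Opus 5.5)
There is a genuine gap, and it sits exactly at the case you flag as the main obstacle: cores $C$ of size about $\Delta-1$. There every vertex of $C$ has only one or two outside neighbours, so all color savings must come from them. You leave this case as ``our first attempt would be a direct reducibility analysis''.

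The one tool you do name for it does not work in this setting. That tool is the ``standard fact'' about a $(\Delta-1)$-clique, proved by coloring $G-K$ and applying Hall's theorem. Coloring a clique from correspondence lists is not a system-of-distinct-representatives problem. In list coloring, an outside vertex $u$ with many neighbours in $K$ deletes the \emph{same} color from all their lists, which is the kind of structure a Hall-type extension exploits. In correspondence coloring, the color of $u$ may block a different color at each neighbour, so concentration buys nothing. Moreover, after coloring $G-K$, the degree-$\Delta$ vertices of $K$ can all have excess $-1$.

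Yet some looseness statement of this kind is indispensable. A saving at $v\in C$ arises from an outside neighbour $x$ of $v$ and a core vertex $a\nsim x$ whose colors block the same color at $v$. If $x$ sees almost all of $C$, such pairs essentially do not exist: under the identity correspondence, adjacent $x$ and $a$ can never block a common color at $v$. So your remark that random savings are ``not available'' really means that this looseness has not been established.

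The paper establishes it by reducibility, in three steps:
\begin{enumerate}
\item $K_t\vee H$ with $t\ge 6$ and $H\in\{\overline{K_3},2K_2,P_4,C_4\}$ is $\minusjedna$-correspondence colorable (Lemma~\ref{lemma__d1_corr_col}). This makes the dense parts $\Delta$-isolated near-cliques (Theorem~\ref{thm__sparse-dense_decomposition}), not merely Molloy--Reed dense sets.
\item Each core vertex has at most one $(C,7)$-tight neighbour (Lemma~\ref{lemma__no_two_vertices_can_see_together_many_vertices_in_max_clique}).
\item For $|C|=\Delta-1$, no outside vertex has more than $85$ neighbours in $C$ (Lemma~\ref{lemma__no_bad_vertex_in_D-1_clique}).
\end{enumerate}
The proof of the third step is not color-and-extend on $G-C$. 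It applies minimality to $G-(C\cup\{u\})$ with two \emph{added} edges carrying straight correspondences. These edges are placed between non-troublemaker pairs so that the clique number stays below $\Delta$ (Corollary~\ref{cor__troublemaker_safe}). It then uses Lemma~\ref{lemma__colocliq} on a $6$-clique found among the other outside neighbours of $N(u)\cap C$.

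Together these steps give $\Omega(\Delta)$ disjoint $(\Delta,85)$-benefactors per core (Theorem~\ref{thm__disjoint_triples}). A core is then finished not by Hall but by securing two vertices of positive excess and coloring greedily (Observation~\ref{obs__greedy}). Note also that your Stage~2 device, a non-adjacent pair blocking the same color at a common neighbour $x$, saves a color only at $x$. It does not save a color at every common neighbour, as a repeated color does in list coloring.

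Second, even if completed, your probabilistic route is unlikely to reach the stated threshold. The paper reports that running the naive coloring procedure with concentration in the correspondence setting gives a bound worse than the $10^{20}$ of Choi, Kierstead and Rabern, far above $3\cdot 10^9$. The paper avoids Talagrand entirely, as follows:
\begin{enumerate}
\item It fixes a set $S$ that contains each vertex independently with probability $\tfrac{11}{60}$, made good for all vertices and cores by the Local Lemma (Lemma~\ref{lemma__extransvers}).
\item It takes a \emph{uniformly random} $(L,\Pi)$-coloring of $G[S]$.
\item Using $f$-invariant resampling (Lemma~\ref{tool__resampling}), it shows that each sparse vertex gets positive excess and each core gets two successful benefactors, each failing with probability at most $\frac{1}{4\Delta^2}$. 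This holds conditionally on any coloring of $S$ outside a set of at most $\tfrac14\Delta$ vertices.
\item The lopsided Local Lemma then combines these events, and this is what yields $\Delta\ge 3\cdot 10^9$.
\end{enumerate}
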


This is equivalent to saying that for every graph $G$ such that $\Delta(G) \geq \thebound$, if $\omega(G) \leq \Delta(G) - 1$, then $\chidp{G} \leq \Delta(G) - 1$,
analogously to the statements of Theorems~\ref{thm__bk_conjecture_is_true_asym_for_coloring} and \ref{thm__bk_conjecture_is_true_asym_for_list_coloring}; however, the formulation
from Theorem~\ref{thm__bk_conjecture_is_true_asym_for_DP_coloring} is more convenient for the proof (since the graphs arising in the inductive argument can have maximum degree less than $\Delta(G)$).

Let us remark that the equality $\chi(G)= \chi_\ell(G)= \chidp{G}$ clearly holds for all graphs $G$ with $\omega(G) = \Delta(G) + 1$.
More interestingly, the parameters also coincide for graphs~$G$ with $\omega(G) = \Delta(G) \geq 3$,
by the analogue of Brooks' theorem for correspondence coloring (see~\cite{Zaj18} for a short proof).
We find the following open question to be interesting (Theorem~\ref{thm__bk_conjecture_is_true_asym_for_DP_coloring} gives $\Delta_0\le \thebound$).

\begin{question} \label{q_dp_analogoue_of_dp_coloring}
    What is the smallest number $\Delta_0$ such that every graph $G$ with $\Delta(G) \geq \Delta_0$ and $\omega(G) = \Delta(G) - 1$ satisfies $\chi(G) = \chi_\ell(G) = \chidp{G}$?
\end{question}

Let us now discuss the main ideas behind our proof of Theorem~\ref{thm__bk_conjecture_is_true_asym_for_DP_coloring}.
At a high level, we proceed analogously to the proofs of Theorems~\ref{thm__bk_conjecture_is_true_asym_for_coloring} in~\cite{Ree99} and
Theorem~\ref{thm__bk_conjecture_is_true_asym_for_list_coloring} in~\cite{CKR23}.
We consider a hypothetical minimal counterexample~$G$ and an arbitrary $(\Delta - 1)$-correspondence assignment $(L, \Pi)$,
and show that~$G$ is $(L, \Pi)$-colorable, thereby proving that no counterexample to Theorem~\ref{thm__bk_conjecture_is_true_asym_for_DP_coloring} exists.

To this end, we first use reducible configuration arguments to obtain structural information about~$G$.  Most importantly, we show that for each clique $C$ of size $\Delta-1$,
no vertex of $V(G)\setminus V(C)$ has more than $85$ neighbors in $C$.  In this part, it is actually useful that we work in the correspondence coloring setting; the proof of Claim~\ref{cl__nomatch}
exploits the fact that we can specify correspondences on edges which we add during a reduction.  Thus, this structural result turns out to be somewhat stronger than
the corresponding lemma of Choi, Kierstead, and Rabern~\cite{CKR23}.

The next part is essentially the same as in both~\cite{Ree99} and~\cite{CKR23}:  We use the structural results to show that the graph $G$
has a \emph{sparse-dense decomposition} into large disjoint cliques (or more precisely, \emph{near-cliques} whose non-edges are incident with a single vertex)
and \emph{sparse} vertices with many non-edges among their neighbors.

The last part is a probabilistic argument, where we deviate substantially from both Reed~\cite{Ree99} and Choi, Kierstead, and Rabern~\cite{CKR23}.
In their proofs, this part was accomplished by an analysis of the \emph{naive coloring procedure}: They first assign to each vertex $v \in V(G)$ a color chosen uniformly independently at random from its list $L(v)$.
Of course, this coloring is unlikely to end up being proper.  Hence, they then uncolor all vertices whose color conflicts with the color of at least one of their neighbors; let $\psi$ denote the resulting partial $(L, \Pi)$-coloring of $G$.
Using rather subtle ``expected value and concentration'' arguments,
one then shows that for each uncolored sparse vertex $v$, it is very likely that several of its neighbors retain colors that conflict with the same color
in $L(v)$. This saves a few colors in the list of $v$ and shows that any partial $(L, \Pi)$-coloring which extends $\psi$ can be further greedily extended to $v$.
By similar (but even more technical) arguments, we can show that we also ``save colors'' in near-cliques of the sparse-dense decomposition, and thus this ``greedy extendability''
property holds with high probability separately at each of the near-cliques.
Finally, one uses the Lovász Local Lemma to prove that these favourable circumstances occur at all sparse vertices and near-cliques at the same time
with non-zero probability; and thus we can greedily extend $\psi$ to an $(L, \Pi)$-coloring of the whole graph $G$, by extending it to the sparse vertices and to the near-cliques of the decomposition
one by one.

It is possible to finish the argument in this way in the correspondence coloring setting as well (indeed, we have done so in an earlier version of the paper);
however, the resulting argument is very technical and leads to a rather bad lower bound on $\Delta$ (somewhat worse than the one obtained in~\cite{CKR23}).
Instead, we follow a simpler approach, inspired by~\cite{Ber19}.

In our approach, we first fix an induced subgraph $H$ of $G$ such that for each vertex $v\in V(G)$, roughly $\Delta/5$ neighbors of $v$ belong to $H$ and
roughly 4\% of the non-edges among the neighbors of $v$ have both endpoints in $H$, among several other similar properties.  This is achieved simply by
choosing vertices independently at random with probability roughly $\tfrac{1}{5}$ and applying standard concentration results and Lovász Local Lemma.
We then consider a \emph{uniformly random}
$(L, \Pi)$-coloring $\psi$ of $H$, i.e., each of the $(L, \Pi)$-colorings of $H$ is chosen as $\psi$ with the same probability,
and apply the standard resampling argument to analyze this coloring.
Using the fact that the lists contain $\Delta-1\gg\Delta/5$ colors,
it is fairly easy to show that the coloring $\psi$ almost surely satisfies the ``greedy extendability'' property separately at each of the sparse vertices and near-cliques of the decomposition.

A small issue now is that the distant parts of the uniformly selected coloring $\psi$ are not independent,
and thus we cannot directly apply the standard Lovász Local Lemma to conclude the argument.
However, it is easy to see that they are ``independent enough'' so that the lopsided version of Lovász Local Lemma (Theorem~\ref{tool__lovasz_local_lemma}) applies.

This approach is technically much simpler and completely avoids the use of more involved concentration arguments.
Moreover, it also gives a much better lower bound on $\Delta$.  Indeed, our lower bound $\thebound$ is
better even than the bound given by Reed~\cite{Ree99} for ordinary proper colorings.  Of course, it should
be mentioned that Reed claims that a more careful analysis can bring the bound from his argument to $10^6$ or even less;
but on the other hand, we have also not gone to any great lengths in optimizing our bound.

The rest of the paper is devoted to executing the outlined argument in detail.  In the following section, we introduce the necessary technical definitions and statements.
In Section~\ref{section_03}, we argue that several reducible configurations cannot appear in a minimal counterexample.
In Section~\ref{section_04}, we use this to argue that a minimal counterexample admits a structural decomposition into precisely described ``sparse'' and ``dense'' parts.
Finally, in Section~\ref{section_05}, we use this structural information to prove that the probabilistic approach outlined above results in a correspondence coloring of the whole graph
with non-zero probability.

% Definitions and Preliminaries
\newpage

\section{Definitions and Preliminaries} \label{section_02}

For a positive integer $n$, let $[n]$ denote the set $\{1,2,\ldots,n\}$.
For graphs, we generally use standard notation;  let us point out a few less common or non-standard definitions.
For a graph $G$ and vertices $u,v\in V(G)$, we write $u \sim_G u$ if $u$ is adjacent to $v$ and $u\nsim_G v$ otherwise.
We let $N_G(v)$ denote the set of neighbors of $v$ and $\deg_G v$ their number.
When the graph $G$ is clear from the context, we drop the subscripts.
A vertex of $G$ is \emph{universal} if it is adjacent to all other vertices of $G$.
For another graph $H$, we write
$G \cong H$ to denote that $G$ and $H$ are isomorphic, and $H \leq G$
to denote that $H$ is isomorphic to an induced subgraph of $G$.
The \emph{complement} of $G$, i.e., the graph with the same vertex set in which distinct vertices are adjacent if and only if they are non-adjacent in $G$, is denoted by $\overline{G}$.

For graphs $F$ and $G$, let $F \vee G$ denote the \emph{complete join} of $G$ and $F$, that is, the graph obtained
from the disjoint union of $F$ and $G$ by adding all edges between $V(F)$ and $V(G)$.
Similarly, for a vertex $v \notin V(G)$, let $G \vee v$ denote the graph obtained from $G$ by adding $v$
and edges from $v$ to all vertices of $G$, i.e., so that $v$ is a universal vertex of the resulting graph.

\subsection{Correspondence coloring}

Let us now introduce additional terminology, definitions, and concepts related to correspondence coloring.
Let $(L, \Pi)$ be a correspondence assignment for a graph~$G$.  With a slight abuse of notation,
we also view $(L, \Pi)$ as a correspondence assignment for subgraphs of $G$ (by implicitly restricting $L$
to the vertices and $\Pi$ to the edges of the subgraph).
A \emph{partial $(L, \Pi)$-coloring of $G$} is an $(L, \Pi)$-coloring of an induced subgraph of $G$.

For an edge $e=uv$ of $G$ and colors $\alpha \in L(u)$ and $\beta \in L(v)$, we say that \emph{the color $\alpha$ at $u$ blocks the color $\beta$ at $v$} if $\{(u, \alpha), (v, \beta)\} \in \Pi(e)$.
When the vertex $u$ is clear from the context, we drop the ``at $u$'' part.  For example, if $\psi$ is a partial $(L, \Pi)$-coloring of $G$ 
and a vertex $u$ is assigned a color by $\psi$, then we say ``$\psi(u)$ blocks the color $\beta$ at $v$'' instead of ``$\psi(u)$ at $u$ blocks the color $\beta$ at $v$''.
In particular \emph{$\psi(u)$ blocks no color at $v$} means that the matching $\Pi(e)$ between $\{u\}\times L(u)$ and $\{v\}\times L(v)$ contains no edge incident with $(u,\psi(u))$.
For another edge $e'=u'v \in E(G)$ such that $\psi$ also assigns a color to the vertex~$u'$, we say that \emph{$\psi(u)$ and $\psi(u')$ together block at most one color at $v$}
if $\psi(u)$ blocks no color at $v$, or $\psi(u')$ blocks no color at $v$, or both $\psi(u)$ and $\psi(u')$ block the same color at $v$.

For a partial $(L, \Pi)$-coloring $\psi$ of a graph $G$, let $\dom(\psi)$ denote the set of vertices to which $\psi$ assigns a color.
We define $(L_\psi, \Pi_\psi)$ as the correspondence assignment for the induced subgraph $G-\dom(\psi)$ such that for each vertex $v\in V(G)\setminus \dom(\psi)$,
$$L_\psi(v) = L(v) \setminus \{\alpha\in L(v) : \text{for some vertex $u\in N(v)\cap \dom(\psi)$, the color $\psi(u)$ blocks the color $\alpha$ at $v$}\}$$ 
(that is, we remove from the list of $v$ the colors that conflict with the colors assigned by $\psi$ to the neighbors of $v$),
and for every edge $e=uv\in E(G-\dom(\psi))$,
$$\Pi_\psi(e)=\{\{(u, \alpha), (v, \beta)\} \in \Pi(e): \alpha \in L_\psi(u), \beta \in L_\psi(v)\}.$$
Given a partial $(L, \Pi)$-coloring $\psi'$ of $G$ such that $\dom(\psi)\cap \dom(\psi')=\emptyset$, let $\psi\cup\psi'$
denote the function with domain $\dom(\psi)\cup \dom(\psi')$ which matches $\psi$ on $\dom(\psi)$ and $\psi'$ on $\dom(\psi')$.
Note that $\psi\cup\psi'$ is not in general a partial $(L, \Pi)$-coloring, since the colors on vertices joined by edges between
$\dom(\psi)$ and $\dom(\psi')$ may conflict.  However, the correspondence assignment $(L_\psi, \Pi_\psi)$ is defined so that the following observation holds.
\begin{observation}\label{obs__combine}
Let $(L, \Pi)$ be a correspondence assignment for a graph~$G$ and let $\psi$ and $\psi'$ be partial $(L, \Pi)$-colorings of $G$ such that $\dom(\psi)\cap \dom(\psi')=\emptyset$.
Then $\psi\cup\psi'$ is a partial $(L, \Pi)$-coloring of $G$ if and only if $\psi'$ is a partial $(L_\psi, \Pi_\psi)$-coloring of $G-\dom(\psi)$.
\end{observation}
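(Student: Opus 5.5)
The plan is to check both directions directly from the definitions, one vertex and one edge at a time. Write $\varphi=\psi\cup\psi'$; this is well defined because $\dom(\psi)\cap\dom(\psi')=\emptyset$. The edges of the graph induced on $\dom(\varphi)$ fall into three types:
\begin{itemize}
\item edges inside $\dom(\psi)$;
\item edges inside $\dom(\psi')$;
\item edges $uv$ with $u\in\dom(\psi)$ and $v\in\dom(\psi')$.
\end{itemize}
Edges of the first type never cause a conflict, since $\psi$ is a partial $(L,\Pi)$-coloring. So $\varphi$ is a partial $(L,\Pi)$-coloring if and only if (i) no edge inside $\dom(\psi')$ is violated under $\Pi$, and (ii) no crossing edge $uv$ has $\{(u,\psi(u)),(v,\psi'(v))\}\in\Pi(uv)$. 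List membership $\varphi(w)\in L(w)$ is automatic, because $\psi$ and $\psi'$ are both partial $(L,\Pi)$-colorings.

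For the forward direction, assume $\varphi$ is a partial $(L,\Pi)$-coloring and fix $v\in\dom(\psi')$. Condition (ii) says that no colored neighbor $u\in N(v)\cap\dom(\psi)$ has $\psi(u)$ blocking $\psi'(v)$ at $v$. By the definition of $L_\psi$, this gives $\psi'(v)\in L_\psi(v)$. For an edge $uv$ with both ends in $\dom(\psi')$, the set $\Pi_\psi(uv)$ is contained in $\Pi(uv)$. Hence (i) implies that $\psi'$ respects $\Pi_\psi$, and so $\psi'$ is a partial $(L_\psi,\Pi_\psi)$-coloring of $G-\dom(\psi)$.

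For the converse, assume $\psi'$ is a partial $(L_\psi,\Pi_\psi)$-coloring. Then $\psi'(v)\in L_\psi(v)$ for every $v\in\dom(\psi')$, which by the definition of $L_\psi$ is exactly condition (ii). For (i), take an edge $uv$ inside $\dom(\psi')$ and suppose $\{(u,\psi'(u)),(v,\psi'(v))\}\in\Pi(uv)$. Both $\psi'(u)\in L_\psi(u)$ and $\psi'(v)\in L_\psi(v)$ hold, so this pair also lies in $\Pi_\psi(uv)$, contradicting that $\psi'$ is an $(L_\psi,\Pi_\psi)$-coloring.

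Nothing here is a genuine obstacle. The one point that needs care is this last step: $\Pi_\psi$ is the restriction of $\Pi$ to pairs whose colors both survive in $L_\psi$. The restriction loses no relevant conflicts, precisely because $\psi'$ only uses surviving colors.
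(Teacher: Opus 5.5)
Your proof is correct and is essentially the definitional check the paper has in mind; the paper gives no separate proof and only remarks that $(L_\psi,\Pi_\psi)$ is defined so that the observation holds. As a minor simplification, your condition (i) is automatic in both directions because $\psi'$ is assumed to be a partial $(L,\Pi)$-coloring and $\Pi_\psi\subseteq\Pi$, so only the crossing edges, i.e.\ the requirement $\psi'(v)\in L_\psi(v)$ for all $v\in\mathrm{dom}(\psi')$, actually need checking.
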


For a function $f \colon V(G) \rightarrow \nat$, we say that $(L, \Pi)$ is an \emph{$f$-correspondence assignment} if $|L(v)| \geq f(v)$ holds for every $v\in V(G)$;
and that $G$ is \emph{$f$-correspondence colorable} if it is $(L, \Pi)$-colorable for every $f$-correspondence assignment $(L, \Pi)$.
We remark that if $f$ is the constant function assigning to each vertex the value $k$, then $f$-correspondence colorability coincides with $k$-correspondence colorability defined in the introduction.
We say that $(L, \Pi)$ is an \emph{exact $f$-correspondence assignment} if $|L(v)|=f(v)$ holds for every $v\in V(G)$,
and an \emph{exact $k$-correspondence assignment} if $|L(v)|=k$ holds for every $v\in V(G)$.
Of course, to show that the graph $G$ is $f$-correspondence colorable, it suffices to consider exact $f$-correspondence assignments,
since we can simply ignore arbitrarily chosen excess colors at vertices $v$ for which $|L(v)|>f(v)$.

Throughout the paper, we often consider consider $\minusjedna_G$-correspondence assignments, where $\minusjedna_G$ is the
function defined as $\minusjedna_G(v) = \deg_G(v) - 1$ for every $v \in V(G)$.
As usual, when the graph $G$ is clear from the context, we drop the subscript.
Note that a $(\Delta(G) - 1)$-correspondence assignment is also a $\minusjedna$-correspondence assignment, but not necessarily vice versa.
Observation~\ref{obs__combine} together with the following observation show that $\minusjedna$-correspondence assignments are important
in the context of $(\Delta(G)-1)$-correspondence coloring.
\begin{observation}\label{obs__makesminusjedna}
Let $G$ be a graph of maximum degree at most $\Delta$, let $(L, \Pi)$ be a correspondence assignment for $G$, and
let $\psi$ be a partial $(L, \Pi)$-coloring of $G$.
If $(L, \Pi)$ is a $\minusjedna_G$-correspondence assignment (and in particular, if $(L,\Pi)$ is a $(\Delta - 1)$-correspondence assignment),
then $(L_\psi, \Pi_\psi)$ is a $\minusjedna_{G-\dom(\psi)}$-correspondence assignment for $G-\dom(\psi)$.
\end{observation}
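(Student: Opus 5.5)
We have to show that for every vertex $v\in V(G)\setminus\dom(\psi)$, the list $L_\psi(v)$ has size at least $\deg_{G-\dom(\psi)}(v)-1$. The plan is a direct counting argument: each colored neighbor removes at most one color from the list, and the number of colored neighbors is exactly the drop in degree.

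Fix a vertex $v\notin\dom(\psi)$ and let $k=|N_G(v)\cap\dom(\psi)|$. By definition, $\deg_{G-\dom(\psi)}(v)=\deg_G(v)-k$.

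By the definition of $L_\psi(v)$, a color $\alpha$ is removed from $L(v)$ only if $\psi(u)$ blocks $\alpha$ at $v$ for some $u\in N(v)\cap\dom(\psi)$. Since $\Pi(uv)$ is a matching, the pair $(u,\psi(u))$ lies in at most one pair of $\Pi(uv)$. Hence each such $u$ blocks at most one color at $v$, and at most $k$ colors are removed in total. Therefore
$$|L_\psi(v)|\ge |L(v)|-k\ge \deg_G(v)-1-k=\deg_{G-\dom(\psi)}(v)-1,$$
so $(L_\psi,\Pi_\psi)$ is a $\minusjedna_{G-\dom(\psi)}$-correspondence assignment. We should also confirm that $\Pi_\psi$ is a valid family of matchings between the restricted lists; this holds because each $\Pi_\psi(e)$ is the restriction of the matching $\Pi(e)$ to pairs whose colors both survive.

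For the parenthetical claim, suppose $(L,\Pi)$ is a $(\Delta-1)$-correspondence assignment. Since $\deg_G(v)\le\Delta$, we get $|L(v)|\ge\Delta-1\ge\deg_G(v)-1$, so $(L,\Pi)$ is a $\minusjedna_G$-correspondence assignment and the argument above applies.

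There is no real obstacle here. The only point needing care is that blocking is a matching, so each colored neighbor costs at most one color.
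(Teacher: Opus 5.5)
Your proof is correct and follows the paper's intended reasoning. The paper states this observation without a written proof, and the justification it relies on elsewhere is the same: each colored neighbor $u$ of $v$ removes at most one color from $L(v)$ because $\Pi(uv)$ is a matching, while also reducing the degree of $v$ by one, and the $(\Delta-1)$ case reduces to the $\minusjedna_G$ case via $\deg_G v\le\Delta$.
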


We often extend partial colorings greedily, by processing the not yet colored vertices one by one in a fixed order
and giving each of them an arbitrary color not blocked by colors of their previously colored neigbors.
Next, we give a few observations useful in showing that this procedure succeeds, i.e., that in each step the
list of the currently processed vertex contains at least one such unblocked color.
Let $\psi$ be a partial $(L,\Pi)$-coloring of a graph $G$.  For a vertex $v\in V(G)\setminus\dom(\psi)$,
let $$\deg^{-\psi} v=\deg_{G-\dom(\psi)} v=\deg_G v-|N(v)\cap \dom(\psi)|$$ denote the number of not yet colored neighbors of $v$.
We define the \emph{$\psi$-excess} of $v$ as $\exc_\psi(v)=|L_\psi(v)|-\deg^{-\psi} v$.
Note that the greedy coloring procedure used to extend $\psi$ can never get stuck on a vertex $v$ with positive $\psi$-excess:
Indeed, even if some of the $\deg^{-\psi} v$ neighbors of $v$ are given colors during the procedure before $v$,
each of them can block at most one color at $v$ in addition to the colors blocked by $\psi$,
and thus at least $\exc_\psi(v)>0$ colors in $L(v)$ remain unblocked.  This argument also implies the following observation.
\begin{observation}\label{obs__rich}
Let $(L, \Pi)$ be a correspondence assignment for a graph~$G$, let $\psi$ be a partial $(L,\Pi)$-coloring of $G$,
and let $v$ be a vertex not belonging to $\dom(\psi)$.  If $\exc_\psi(v)>0$, then $\psi$ extends to an $(L,\Pi)$-coloring of $G$
if and only if $\psi$ extends to an $(L,\Pi)$-coloring of $G-v$.
\end{observation}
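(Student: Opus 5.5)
The statement has two directions, and both follow from the greedy extension argument described just before it, together with Observation~\ref{obs__combine}.

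The forward direction is immediate. Suppose $\varphi$ is an $(L,\Pi)$-coloring of $G$ extending $\psi$. Since $v\notin\dom(\psi)$, the vertex $v$ is not among the vertices fixed by $\psi$. Restricting $\varphi$ to $V(G)\setminus\{v\}$ therefore gives an $(L,\Pi)$-coloring of $G-v$, and this restriction still extends $\psi$. No constraint is violated, because removing a vertex only removes conflict conditions.

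For the converse, let $\varphi'$ be an $(L,\Pi)$-coloring of $G-v$ extending $\psi$. We have to find a color for $v$ that is not blocked by $\varphi'$. We split the blocked colors at $v$ into two groups, according to which neighbors block them.

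\begin{itemize}
\item The neighbors of $v$ in $\dom(\psi)$ are colored as in $\psi$. By the definition of $L_\psi(v)$, every color in $L_\psi(v)$ is unblocked by these neighbors.
\item The remaining neighbors of $v$ number exactly $\deg^{-\psi} v$. Since each $\Pi(e)$ is a matching, each of them blocks at most one color of $L(v)$.
\end{itemize}

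Hence at least $|L_\psi(v)|-\deg^{-\psi} v=\exc_\psi(v)>0$ colors of $L_\psi(v)\subseteq L(v)$ are unblocked by $\varphi'$ at $v$. We pick one such color $\alpha$ and set $\varphi=\varphi'\cup\{v\mapsto\alpha\}$.

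Then $\varphi(v)=\alpha\in L(v)$. Every edge incident with $v$ is non-conflicting by the choice of $\alpha$, and every other edge is non-conflicting because $\varphi'$ is an $(L,\Pi)$-coloring. So $\varphi$ is an $(L,\Pi)$-coloring of $G$ extending $\psi$. Alternatively, this step can be phrased via Observation~\ref{obs__combine}, applied to $\psi$ and to $\varphi'$ restricted to $V(G)\setminus(\dom(\psi)\cup\{v\})$.

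There is no real obstacle here. The only point needing care is the counting: the colors already blocked by $\psi$ are exactly those excluded from $L_\psi(v)$, so they are not counted a second time. The newly colored neighbors contribute at most $\deg^{-\psi} v$ further blocked colors.
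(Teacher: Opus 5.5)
Your proof is correct and matches the paper's argument. The paper justifies this observation by the same count: each of the $\deg^{-\psi} v$ uncolored neighbors of $v$ blocks at most one color beyond those already excluded from $L_\psi(v)$, so at least $\exc_\psi(v)>0$ colors remain available. You additionally spell out the trivial forward direction (restriction to $G-v$), which the paper leaves implicit.
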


Note that deleting $v$ as in this observation increases the $\psi$-excess of its neighbors by one,
which may make it possible to remove further vertices.
In particular, this has the following consequence, which we often apply in proving reducibility of configurations.

\begin{observation}\label{obs__greedy}
    Let $G$ be a graph, let $(L, \Pi)$ be a correspondence assignment for~$G$, and let $\psi$ be a partial $(L,\Pi)$-coloring of $G$.
    Let $v_1$, \ldots, $v_n$ be an ordering of the vertices of $V(G)\setminus \dom(\psi)$.
    If for each $i\in [n]$ the vertex $v_i$ has at least $1-\exc_\psi(v)$ neighbors in $\{v_{i+1},\ldots,v_n\}$,
    then $\psi$ extends (greedily) to an $(L,\Pi)$-coloring of $G$.
\end{observation}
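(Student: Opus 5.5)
The plan is to argue by induction on $n$, coloring greedily in the order $v_1,\ldots,v_n$; Observation~\ref{obs__rich} is the engine, applied repeatedly from the end of the ordering. (I read the hypothesis as concerning $\exc_\psi(v_i)$, the excess of the vertex $v_i$ itself.) Rather than extending $\psi$ forward one vertex at a time, we delete the uncolored vertices in the reverse order $v_n, v_{n-1}, \ldots, v_1$. At each step we verify that the current last vertex has positive excess in the graph that remains.

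Let $G_i = G - \{v_{i+1},\ldots,v_n\}$, so that $G_n = G$ and $V(G_0)\setminus\dom(\psi)=\emptyset$. We view $\psi$ as a partial $(L,\Pi)$-coloring of each $G_i$. Since $\dom(\psi)\subseteq V(G_i)$, the list $L_\psi(v_i)$ is the same in $G_i$ as in $G$. The number of uncolored neighbors of $v_i$ in $G_i$, however, is $\deg^{-\psi}_G v_i$ minus the number $d_i$ of neighbors of $v_i$ in $\{v_{i+1},\ldots,v_n\}$. Hence the $\psi$-excess of $v_i$ computed in $G_i$ equals $\exc_\psi(v_i)+d_i$. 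The hypothesis $d_i\ge 1-\exc_\psi(v_i)$ says exactly that this quantity is at least $1$, i.e., positive.

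Applying Observation~\ref{obs__rich} in $G_i$ to the vertex $v_i$, we get that $\psi$ extends to an $(L,\Pi)$-coloring of $G_i$ if and only if it extends to one of $G_i - v_i = G_{i-1}$. Chaining these equivalences for $i=n,n-1,\ldots,1$ reduces the question to $G_0$. There $\psi$ is already defined on all vertices and is an $(L,\Pi)$-coloring of $G_0=G[\dom(\psi)]$ by the definition of a partial coloring. So $\psi$ extends to an $(L,\Pi)$-coloring of $G$.

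Equivalently, and this justifies the word ``greedily'', we can color $v_1,\ldots,v_n$ in order. When $v_i$ is processed, only its neighbors among $v_1,\ldots,v_{i-1}$ have received colors beyond $\psi$. There are at most $\deg^{-\psi} v_i - d_i$ of them, and each blocks at most one color of $L_\psi(v_i)$. So at least $\exc_\psi(v_i)+d_i\ge 1$ colors remain available for $v_i$.

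There is no real obstacle here. The only point needing care is the bookkeeping: the lists $L_\psi$ do not change when we pass to $G_i$, while the uncolored degree does. It is exactly this drop in uncolored degree that turns the hypothesis into positive excess.
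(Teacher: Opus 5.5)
Your proposal is correct and follows the paper's own justification. The paper derives this observation from Observation~\ref{obs__rich}, noting that deleting a vertex of positive excess raises each neighbor's excess by one; this is your reverse-order deletion through the graphs $G_i$, your direct greedy count is the argument behind Observation~\ref{obs__rich} itself, and you rightly read the hypothesis as concerning $\exc_\psi(v_i)$, fixing the typo in the statement.
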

Let us remark that if $(L, \Pi)$ is a $\minusjedna$-correspondence assignment, then
for every partial $(L,\Pi)$-coloring $\psi$, the $\psi$-excess of every vertex not in $\dom(\psi)$ is at least $-1$.
Thus, in this case the assumption for an index $i\in [n-2]$ is always satisfied if the vertex $v_i$ has at least two neighbors in $\{v_{i+1},\ldots,v_n\}$.
In particular, in our applications of Observation~\ref{obs__greedy}, the vertices $v_1, v_2, \ldots, v_{n-2}$ are often adjacent to both $v_{n-1}$ and $v_n$.
In such cases, we only specify the last two vertices $v_{n-1}$ and $v_n$ of the ordering, as the vertices $v_1, v_2, \ldots, v_{n-2}$ can be ordered arbitrarily.

Furthermore, note that when $(L, \Pi)$ is a $(\Delta(G) - 1)$-correspondence assignment,
it may be easier to use the observation if $G$ has some vertices of degree less than $\Delta(G)$. For example, if $\deg(y) \leq \Delta(G) - 1$, then
$y$ has non-negative $\psi$-excess for every partial $(L, \Pi)$-coloring $\psi$ with $y\not\in\dom(\psi)$.

It is also useful to notice that extending $\psi$ to some of the uncolored vertices cannot decrease the excess of the remaining uncolored vertices.
More precisely, if $\psi'$ is a partial $(L, \Pi)$-coloring of $G$ which extends $\psi$ and $v$ is a vertex not in $\dom(\psi')$, then
$$\exc_{\psi'}(v)\ge \exc_{\psi}(v).$$
Indeed, coloring a neighbor of $v$ may block an additional color at $v$, but it also decreases the number of not yet colored neighbors of $v$.
By extending $\psi$ carefully, we can even increase the excess of $v$.
\begin{observation}\label{obs__improve}
Let $G$ be a graph, let $(L, \Pi)$ be a correspondence assignment for~$G$, and let $\psi$ and $\psi'$ be partial $(L,\Pi)$-colorings of $G$,
where $\psi'$ extends $\psi$.  Let $v$ be a vertex of $G$ not in $\dom(\psi')$.
If
\begin{itemize}
\item $u\in\dom(\psi')\setminus\dom(\psi)$ is a neighbor of $v$ and $\psi'(u)$ blocks no color in $L_\psi(v)$ at $v$, or
\item $u,u'\in\dom(\psi')\setminus\dom(\psi)$ are distinct neighbors of $v$ and $\psi(u)$ and $\psi(u')$ together block at most one color in $L_\psi(v)$ at $v$,
\end{itemize}
then $\exc_{\psi'}(v)>\exc_{\psi(v)}$.
\end{observation}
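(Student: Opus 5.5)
The plan is to compare $|L_{\psi'}(v)|$ with $|L_\psi(v)|$ and $\deg^{-\psi'} v$ with $\deg^{-\psi} v$ directly from the definitions. We do this by splitting $\dom(\psi')\setminus\dom(\psi)$ into its neighbors of $v$ and its non-neighbors. (The conclusion should be read as $\exc_{\psi'}(v)>\exc_\psi(v)$, and in the second case the hypothesis concerns $\psi'(u)$ and $\psi'(u')$.)

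Let $D=N(v)\cap(\dom(\psi')\setminus\dom(\psi))$ and $d=|D|$. Since $v\notin\dom(\psi')$, we have $\deg^{-\psi'} v=\deg^{-\psi} v-d$. Vertices of $\dom(\psi')\setminus\dom(\psi)$ that are not adjacent to $v$ block nothing at $v$. Moreover, since $\psi'$ extends $\psi$, the colors blocked at $v$ by vertices of $\dom(\psi)$ are the same under both colorings. Hence
$$L_{\psi'}(v)=L_\psi(v)\setminus B,$$
where $B$ is the set of colors in $L_\psi(v)$ blocked at $v$ by $\psi'(w)$ for some $w\in D$. Each $w\in D$ blocks at most one color at $v$, because $\Pi(wv)$ is a matching. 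Therefore $|B|\le d$, which gives
$$\exc_{\psi'}(v)=|L_\psi(v)|-|B|-\deg^{-\psi} v+d\ \ge\ \exc_\psi(v)+(d-|B|).$$
This also recovers the monotonicity remark made just before the statement.

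It then suffices to show $|B|\le d-1$ in either case. We bound $|B|$ by the number of distinct colors of $L_\psi(v)$ blocked, counted per vertex of $D$. In the first case, $u\in D$ contributes no color of $L_\psi(v)$, so the remaining $d-1$ vertices of $D$ contribute at most $d-1$ colors. In the second case, $u$ and $u'$ together contribute at most one color of $L_\psi(v)$, and the other $d-2$ vertices contribute at most $d-2$ colors, so $|B|\le d-1$.

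One subtlety is that ``blocks no color in $L_\psi(v)$'' is weaker than ``blocks no color''. The color blocked by $u$ may already have been removed by $\psi$, and this is harmless, since $B$ is defined as a subset of $L_\psi(v)$. There is no real obstacle here; the only care needed is this bookkeeping that $B\subseteq L_\psi(v)$.
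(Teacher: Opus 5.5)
Your proof is correct, and it is exactly the counting argument the paper has in mind. The paper gives no formal proof, only the preceding remark that each newly colored neighbor of $v$ blocks at most one extra color while lowering $\deg^{-}$ by one; your identity $L_{\psi'}(v)=L_\psi(v)\setminus B$ with $\exc_{\psi'}(v)=\exc_\psi(v)+d-|B|$ and $|B|\le d-1$ under either hypothesis makes this precise, and your readings of the typos ($\exc_\psi(v)$ for $\exc_{\psi(v)}$, and $\psi'(u),\psi'(u')$ in the second bullet) are the intended ones.
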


The following observation is often used to extend $\psi$ to achieve the second option from Observation~\ref{obs__improve}.

\begin{observation} \label{obs__blocking_at_most_one_color}
    Let $G$ be a graph, let $(L, \Pi)$ be a correspondence assignment for $G$, and let $\psi$ be a partial $(L, \Pi)$-coloring.
    Let $v\in V(G)\setminus \dom(\psi)$ be a vertex, let $k$ be an integer such that $\exc_\psi(v)\ge k$, and let $u,u'\in V(G)\setminus \dom(\psi)$ be distinct neighbors of $v$.
    If $u \nsim u'$, the lists $L_\psi(u)$ and $L_\psi(u')$ are non-empty, and $|L_\psi(u)| + |L_\psi(u')| > \deg^{-\psi} v+k$, then
    we can extend $\psi$ to an $(L, \Pi)$-coloring $\psi'$ of $G[\dom(\psi)\cup\{u,u'\}]$ so that
    $\exc_{\psi'}(v)\ge k+1$.
\end{observation}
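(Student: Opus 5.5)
The plan is a counting argument: find colors $\alpha\in L_\psi(u)$ and $\beta\in L_\psi(u')$ that, as a pair, block at most one color of $L_\psi(v)$ at $v$. Since $u\nsim u'$, there is no edge between $u$ and $u'$, so coloring $u$ by $\alpha$ and $u'$ by $\beta$ never creates a conflict between them. We then set $\psi'=\psi\cup\{u\mapsto\alpha,u'\mapsto\beta\}$. Both colors lie in the reduced lists $L_\psi(u)$ and $L_\psi(u')$, so Observation~\ref{obs__combine} shows that $\psi'$ is a partial $(L,\Pi)$-coloring of $G[\dom(\psi)\cup\{u,u'\}]$.

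To find the pair, we write $\Pi_\psi(uv)$ for the matching between $L_\psi(u)$ and $L_\psi(v)$, and similarly for $u'v$. There are two cases.

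\emph{Case 1: some $\alpha\in L_\psi(u)$ is unmatched in $\Pi_\psi(uv)$,} i.e., $\alpha$ blocks no color of $L_\psi(v)$ at $v$. Take any $\beta\in L_\psi(u')$, which exists by non-emptiness. Then together they block at most one color. The symmetric case, where some $\beta\in L_\psi(u')$ is unmatched in $\Pi_\psi(u'v)$, is handled the same way.

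\emph{Case 2: every color of $L_\psi(u)$ and of $L_\psi(u')$ is matched to a color of $L_\psi(v)$.} The matched images at $v$ then have sizes $|L_\psi(u)|$ and $|L_\psi(u')|$, and both lie in $L_\psi(v)$. We have
\[
|L_\psi(v)|=\deg^{-\psi}v+\exc_\psi(v)\ge \deg^{-\psi}v+k
\]
and possibly larger. This is the delicate point: a bare pigeonhole on $|L_\psi(u)|+|L_\psi(u')|>|L_\psi(v)|$ does not follow from the hypothesis. So we argue differently. If the two image sets intersect in some $\gamma\in L_\psi(v)$, take $\alpha,\beta$ to be the colors mapped to $\gamma$; they block only $\gamma$. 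If the images are disjoint, then any choice of $\alpha,\beta$ blocks exactly two colors. We then compute the new excess directly. It is $\exc_\psi(v)-2+2=\exc_\psi(v)$, because $v$ loses two colors but also two uncolored neighbors. This is useful when $\exc_\psi(v)\ge k+1$ already, but in general we need $|L_\psi(v)|=\deg^{-\psi}v+k$ to force an intersection.

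To reduce to that situation, we first discard surplus colors from $L_\psi(v)$, treating $v$ with a list of size exactly $\deg^{-\psi}v+k$. If $\exc_\psi(v)\ge k+1$, we can in fact keep any choice, since the computation above shows the excess never drops. So assume $\exc_\psi(v)=k$. Then $|L_\psi(v)|=\deg^{-\psi}v+k<|L_\psi(u)|+|L_\psi(u')|$, and pigeonhole yields a common image $\gamma$. Choosing $\alpha,\beta$ as above, Observation~\ref{obs__improve} (second bullet, applied with $\psi$ and $\psi'$) gives $\exc_{\psi'}(v)>\exc_\psi(v)=k$, that is, $\exc_{\psi'}(v)\ge k+1$.

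The main obstacle is this case split on whether the excess is exactly $k$ or larger. It is resolved by noting that excess never decreases when neighbors are colored, so the monotonicity remark preceding Observation~\ref{obs__improve} covers the case $\exc_\psi(v)\ge k+1$ with any valid choice of $\alpha,\beta$.
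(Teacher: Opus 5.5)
Your proof is correct and follows essentially the same route as the paper. You handle $\exc_\psi(v)>k$ by monotonicity of excess; then, with $|L_\psi(v)|=\deg^{-\psi}v+k$, either some color at $u$ or $u'$ blocks nothing in $L_\psi(v)$, or pigeonhole on the injective matching images gives a common blocked color, and Observation~\ref{obs__improve} finishes. The remark about discarding surplus colors from $L_\psi(v)$ is unnecessary and would be misleading if carried out, since your split into $\exc_\psi(v)=k$ versus $\exc_\psi(v)\ge k+1$ already does the job.
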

\begin{proof}
    If $\exc_\psi(v)>k$, then we can extend $\psi$ to $\psi'$ by choosing the colors for $u$ and $u'$ from their non-empty lists $L_\psi(u)$ and $L_\psi(u')$ arbitrarily.
    Hence, $\exc_\psi(v)=k$, and thus $\deg^{-\psi} v+k=|L_\psi(v)|$.

    If there exists a color $\alpha \in L_\psi(u)$ that blocks no color in $L_\psi(v)$, then choose a color $\beta\in L_\psi(u')$
    arbitrarily.  Similarly, if a color $\beta\in L_\psi(u')$ blocks no color in $L_\psi(v)$, then choose a color $\alpha \in L_\psi(u)$ arbitrarily.
    Finally, if every color in $L_\psi(u)$ blocks a color in $L_\psi(v)$ and every color in $L_\psi(u')$ blocks a color in $L_\psi(v)$,
    then since $|L_\psi(u)| + |L_\psi(u')| > \deg^{-\psi} v+k=|L_\psi(v)|$, we can choose colors $\alpha \in L_\psi(u)$ and $\beta \in L_\psi(u')$ that block the same color in $L_\psi(v)$.

    In either case, the color $\alpha$ at $u$ and the color $\beta$ at $u'$ together block at most one color in $L_\psi(v)$ at~$v$.
    Therefore, we can let $\psi'(u)=\alpha$ and $\psi'(u')=\beta$, and the conclusion follows from Observation~\ref{obs__improve}.
\end{proof}

Finally, let us remark that in correspondence coloring, the names of the colors play no intrinsic role,
and thus we can rename the colors as needed.  More precisely, for each vertex $v\in V(G)$, let $\sigma_v:\text{colors}\to\text{colors}$
be any bijection, and let $\sigma$ denote the system $\{\sigma_v:v\in V(G)\}$.  Let $L^\sigma$ be the list assignment giving to each vertex $v\in V(G)$ the list $L^\sigma(v)=\{\sigma_v(\alpha):\alpha\in L(v)\}$.
Similarly, let $\Pi^\sigma$ be the system of matchings assigning to each edge $e=uv\in E(G)$
the matching $\Pi^\sigma(e)=\{\{(u,\sigma_u(\alpha)),(v,\sigma_v(\beta))\}:\{(u,\alpha),(v,\beta)\}\in \Pi(e)\}$.
We say that $(L^\sigma,\Pi^\sigma)$ is the \emph{$\sigma$-renaming} of $(L,\Pi)$.
If $(L',\Pi')$ is the $\sigma$-renaming of $(L,\Pi)$ for some system $\sigma$, we say that $(L',\Pi')$ is \emph{obtained from $(L, \Pi)$ by renaming colors}.
Moreover, if $A$ is a subset of $V(G)$ such that $\sigma_u$ is the identity function for every vertex $u\in V(G)\setminus A$,
then we say that $(L',\Pi')$ is \emph{obtained from $(L, \Pi)$ by renaming colors on $A$}.
Note that if $\varphi$ is an $(L,\Pi)$-coloring of $G$, then $\sigma\circ\varphi$ is an $(L^\sigma,\Pi^\sigma)$-coloring of $G$.
Moreover, the renaming is reversible. That is, let $\sigma^{-1}$ denote the system $\{\sigma^{-1}_v:v\in V(G)\}$.
Then the $\sigma^{-1}$-renaming of a $\sigma$-renaming of $(L,\Pi)$ is equal to $(L,\Pi)$.
These observations imply that the following claim holds.

\begin{observation} \label{obs__renaming_at_vertex}
    Let $(L, \Pi)$ and $(L', \Pi')$ be correspondence assignments for $G$.
    If $(L', \Pi')$ is obtained from $(L, \Pi)$ by renaming colors, then $G$ is $(L, \Pi)$-colorable if and only if it is $(L', \Pi')$-colorable.
\end{observation}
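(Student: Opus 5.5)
The statement is Observation~\ref{obs__renaming_at_vertex}. Its proof comes straight from the two facts recorded just before it: renaming carries colorings to colorings, and renaming can be undone.

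\emph{Forward direction.} Suppose $(L',\Pi')=(L^\sigma,\Pi^\sigma)$ and $\varphi$ is an $(L,\Pi)$-coloring of $G$. Define $\varphi'(v)=\sigma_v(\varphi(v))$ for every $v$.
\begin{itemize}
\item List condition: since $\varphi(v)\in L(v)$, we have $\varphi'(v)\in L^\sigma(v)$ by the definition of $L^\sigma$.
\item Edge condition: take an edge $e=uv$ and suppose $\{(u,\varphi'(u)),(v,\varphi'(v))\}\in\Pi^\sigma(e)$. By the definition of $\Pi^\sigma(e)$, this pair has the form $\{(u,\sigma_u(\alpha)),(v,\sigma_v(\beta))\}$ for some $\{(u,\alpha),(v,\beta)\}\in\Pi(e)$. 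Because $\sigma_u$ and $\sigma_v$ are bijections, $\alpha=\varphi(u)$ and $\beta=\varphi(v)$. This contradicts $\varphi$ being an $(L,\Pi)$-coloring.
\end{itemize}
So $\varphi'$ is an $(L^\sigma,\Pi^\sigma)$-coloring of $G$.

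\emph{Converse.} Let $\sigma^{-1}=\{\sigma_v^{-1}:v\in V(G)\}$, which is again a system of bijections. Applying the definitions directly, the $\sigma^{-1}$-renaming of $(L^\sigma,\Pi^\sigma)$ is $(L,\Pi)$: each $\sigma_v^{-1}\circ\sigma_v$ is the identity on lists and on matching pairs. The forward direction applied with $\sigma^{-1}$ therefore turns any $(L',\Pi')$-coloring into an $(L,\Pi)$-coloring.

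There is no real obstacle here. The only point needing care is injectivity of the $\sigma_v$ in the edge condition, which guarantees that a conflicting pair in $\Pi^\sigma(e)$ can only come from the pair $\{(u,\varphi(u)),(v,\varphi(v))\}$ in $\Pi(e)$.
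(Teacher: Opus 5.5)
Your proof is correct and follows essentially the same route as the paper. The paper notes that $\sigma\circ\varphi$ is an $(L^\sigma,\Pi^\sigma)$-coloring and that the $\sigma^{-1}$-renaming undoes the $\sigma$-renaming, which is exactly your forward direction and converse; you additionally spell out the list and edge checks (including injectivity of each $\sigma_v$) that the paper leaves implicit.
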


For example, if all lists in $(L, \Pi)$ have the same size, then we can rename the colors so that all lists are identical.
More importantly, we can use the renaming to simplify our mental image of a correspondence assignment.
An edge $e = uv$ of $G$ is said to be \emph{straight} in a correspondence assignment $(L, \Pi)$ if
every pair $\{(u, \alpha), (v, \beta)\} \in \Pi(e)$ satisfies $\alpha=\beta$, i.e., if the conflicts
on $e$ form a subset of the conflicts for the usual proper coloring.
Of course, by renaming colors at the vertex $v$, we can ensure that the edge $e$ is straight in the resulting correspondence assignment.
This renaming can only affect the straightness of the rest of the edges incident with $v$.
Thus, if $T\subseteq G$ is a tree, then we can rename the colors at vertices of $T$ in order according to the distance
from a fixed root of $T$ to ensure that all edges of $T$ are straight
(in this case, we say that the subgraph $T$ is \emph{straight)}.  Thus, we get the following useful observation.

\begin{observation}\label{obs__straightening}
    Let $G$ be a graph, let $T \subseteq G$ be a tree and let $v_0$ be a vertex of $T$.  For every correspondence assignment $(L, \Pi)$ for $G$,
    there exists a correspondence assignment $(L', \Pi')$ obtained from $(L, \Pi)$ by renaming the colors on $V(T)\setminus\{v_0\}$
    such that $T$ is straight in $(L', \Pi')$.  Moreover, for any set $\Gamma\supseteq L(v_0)$ of size $\max \{|L(v)|:v\in V(T)\}$,
    we can assume that $L(v)\subseteq \Gamma$ for every vertex $v\in V(T)$.
\end{observation}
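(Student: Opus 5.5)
We straighten the edges of $T$ one at a time, renaming colors at vertices in order of their distance from the root $v_0$; renaming at a vertex changes only the names, not the structure, so it never spoils an edge already straightened. Concretely, fix a breadth-first ordering $v_0,v_1,\ldots,v_m$ of $V(T)$, so that each $v_i$ with $i\ge 1$ has a unique parent $p(v_i)$ earlier in the ordering, and let $e_i=p(v_i)v_i$. Set $\sigma_{v_0}$ and $\sigma_u$ for $u\notin V(T)$ to be the identity. We define $\sigma_{v_1},\ldots,\sigma_{v_m}$ in order. When we reach $v_i$, the bijection $\sigma_{p(v_i)}$ is already fixed, and we choose $\sigma_{v_i}$ as follows. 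For every pair $\{(p(v_i),\alpha),(v_i,\beta)\}\in\Pi(e_i)$, set $\sigma_{v_i}(\beta)=\sigma_{p(v_i)}(\alpha)$. Then extend $\sigma_{v_i}$ to a bijection of the color set.

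This partial assignment is injective because $\Pi(e_i)$ is a matching and $\sigma_{p(v_i)}$ is injective. Hence it extends to a bijection. In the renamed assignment, every pair of $\Pi^\sigma(e_i)$ has the form $\{(p(v_i),\gamma),(v_i,\gamma)\}$, so $e_i$ is straight. Since the edges of $T$ are exactly the edges $e_i$, each determined when its child endpoint is processed, and later renamings never touch the endpoints of earlier edges, all of $T$ is straight. The renaming is the identity at $v_0$ and outside $V(T)$, as required.

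For the moreover part, let $\Gamma\supseteq L(v_0)$ with $|\Gamma|=\max\{|L(v)|:v\in V(T)\}$. We prove by induction along the ordering that $\sigma_{v_i}(L(v_i))\subseteq\Gamma$. For $v_0$ this is the assumption $L(v_0)\subseteq\Gamma$. For $i\ge1$, the forced values $\sigma_{v_i}(\beta)=\sigma_{p(v_i)}(\alpha)$ arise only from matched colors with $\alpha\in L(p(v_i))$, and by induction these values lie in $\Gamma$. These forced colors $\beta$ form a subset $B\subseteq L(v_i)$ whose images already lie in $\Gamma$. The remaining colors $L(v_i)\setminus B$ number $|L(v_i)|-|B|\le|\Gamma|-|B|$, and there are exactly that many free slots in $\Gamma\setminus\sigma_{v_i}(B)$. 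So we can map $L(v_i)\setminus B$ injectively into $\Gamma\setminus\sigma_{v_i}(B)$. Finally, extend to a bijection of all colors; this is possible since a partial injection of a set into itself with finite domain extends to a bijection, given that the color universe is infinite or at least large enough. The main technical point here is just this counting, and it is exactly why $|\Gamma|$ must be at least the maximum list size.

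Finally, by Observation~\ref{obs__renaming_at_vertex}, the renamed assignment $(L',\Pi')=(L^\sigma,\Pi^\sigma)$ is the desired one.
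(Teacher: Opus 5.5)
Your proof is correct and follows the paper's approach. Like the paper, you root $T$ at $v_0$ and rename colors vertex by vertex in order of distance from the root so that each edge to the parent becomes straight; your explicit choice of $\sigma_{v_i}$ and your counting argument for the $\Gamma$ part supply details the paper leaves implicit.
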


Finally, let us introduce one more property of exact correspondence assignments.  We say that an exact $k$-correspondence assignment $(L, \Pi)$ for a graph $G$ is \emph{full}
if for each edge $e=uv\in E(G)$, the matching $\Pi(e)$ is perfect (pairs each element of $\{u\}\times L(u)$ with exactly one element of $\{v\}\times L(v)$).  To show that
a graph is $k$-correspondence colorable, it clearly suffices to consider full exact $k$-correspondence assignments, since adding extra pairs to
the matchings on edges only makes it harder to color the graph.

\subsection{Probabilistic tools}

We use several standard results from probability theory, which can be found e.g.~in~\cite{alonspencer,MR01}.
In particular, we use the following lopsided version of Lovász Local Lemma.

\begin{theorem}[Lovász Local Lemma]\label{tool__lovasz_local_lemma}
    Consider a set $\EE$ of (bad) events and let $d$ be a positive integer.
    Suppose that for every event $A\in \EE$, there exists a set $\EE_A\subseteq \EE$ of size at most $d$
    such that for every $\EE'\subseteq \EE\setminus \EE_A$,
    $$\Pr{A\Bigm|\bigwedge_{A'\in \EE'} \lnot A'}\le \frac{1}{4d}.$$
    Then $\Pr{\bigwedge_{A\in \EE} \lnot A} > 0$.
\end{theorem}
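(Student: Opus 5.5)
We would prove this by the standard inductive argument for the Lovász Local Lemma, adapted to the lopsided hypothesis. We assume, as in all applications in the paper, that $\EE$ is finite; write $\EE=\{A_1,\ldots,A_n\}$. For a set $\mathcal{S}\subseteq\EE$ we abbreviate $\overline{\mathcal{S}}=\bigwedge_{A'\in\mathcal{S}}\lnot A'$.

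The key intermediate claim, proved by induction on $|\mathcal{S}|$, is the following: for every $\mathcal{S}\subseteq \EE$ we have $\Pr{\overline{\mathcal{S}}}>0$, and for every $A\in\EE\setminus\mathcal{S}$,
$$\Pr{A\Bigm| \overline{\mathcal{S}}}\le \frac{1}{2d}.$$
Positivity of $\Pr{\overline{\mathcal{S}}}$ follows from the inequality for smaller sets. List $\mathcal{S}=\{B_1,\ldots,B_m\}$ and write $\Pr{\overline{\mathcal{S}}}=\prod_{i}\Pr{\lnot B_i\mid \lnot B_1\wedge\cdots\wedge\lnot B_{i-1}}$. Each factor is at least $1-\tfrac{1}{2d}>0$ by the induction hypothesis applied to the proper subsets $\{B_1,\ldots,B_{i-1}\}$. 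The base case $\mathcal{S}=\emptyset$ holds because $\Pr{A}\le\tfrac1{4d}$ by the hypothesis with $\EE'=\emptyset$.

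For the inequality, split $\mathcal{S}$ into $\mathcal{S}_1=\mathcal{S}\cap\EE_A$ and $\mathcal{S}_2=\mathcal{S}\setminus\EE_A$. If $\mathcal{S}_1=\emptyset$, the hypothesis gives the bound $\tfrac1{4d}$ directly. Otherwise we write
$$\Pr{A\Bigm|\overline{\mathcal{S}}}=\frac{\Pr{A\wedge\overline{\mathcal{S}_1}\mid \overline{\mathcal{S}_2}}}{\Pr{\overline{\mathcal{S}_1}\mid\overline{\mathcal{S}_2}}}.$$
The numerator is at most $\Pr{A\mid\overline{\mathcal{S}_2}}\le\tfrac1{4d}$ by the hypothesis, since $\mathcal{S}_2\subseteq\EE\setminus\EE_A$. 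For the denominator we use the union bound,
$$\Pr{\overline{\mathcal{S}_1}\mid\overline{\mathcal{S}_2}}\ge 1-\sum_{B\in\mathcal{S}_1}\Pr{B\mid\overline{\mathcal{S}_2}}.$$
Each term in the sum is at most $\tfrac1{2d}$ by induction, because $|\mathcal{S}_2|<|\mathcal{S}|$ and $B\notin\mathcal{S}_2$. Since $|\mathcal{S}_1|\le|\EE_A|\le d$, the denominator is at least $\tfrac12$, and the ratio is therefore at most $\tfrac1{2d}$. All conditionings are on events of positive probability, by the positivity part of the induction.

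Finally, apply the chain-rule product above to $\mathcal{S}=\EE$: it gives $\Pr{\bigwedge_{A\in\EE}\lnot A}\ge(1-\tfrac1{2d})^{n}>0$. The only delicate point is the bookkeeping in the induction. Positivity and the conditional bound must be proved together, and the union-bound step must apply the induction hypothesis only to the strictly smaller set $\mathcal{S}_2$. There is no real obstacle beyond this.
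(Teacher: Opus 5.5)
Your proof is correct, and it is the standard inductive proof of the lopsided Local Lemma in the $4pd\le 1$ form (essentially the Molloy--Reed argument, proving $\Pr{A\mid\overline{\mathcal{S}}}\le 2p$ by induction). The paper gives no proof of its own; it quotes the statement from the standard references it cites, so your argument matches what those references contain. Your explicit finiteness assumption on $\EE$ is genuinely needed: the statement fails for countably many independent events of probability $\tfrac14$ with $d=1$. You also handle correctly the point that the hypothesis is only ever invoked with conditioning events already shown to have positive probability.
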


\noindent We use the following Chernoff Bounds.

\begin{theorem}[Chernoff Bounds] \label{tool__chernoff}
    Let $X$ be a sum of independent Bernoulli variables.
    \begin{itemize}
    \item For any non-negative real numbers $\lambda$ and $t$ such that $\ex{X}\le \lambda$,
    $$\Pr{X\ge \lambda+t}\le \exp\Bigl(-\frac{t^2}{2\lambda+t}\Bigr).$$
    \item For any non-negative real numbers $\eta$ and $t$ such that $t\le \eta\le \ex{X}$,
    $$\Pr{X\le \eta-t}\le \exp\Bigl(-\frac{t^2}{2\eta}\Bigr).$$
    \end{itemize}
\end{theorem}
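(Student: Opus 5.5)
The plan is the standard moment-generating-function argument. Write $X=\sum_i X_i$ with independent Bernoulli variables $X_i$, $\Pr{X_i=1}=p_i$, and set $\mu=\ex{X}=\sum_i p_i$. For any real $s$, independence and $1+y\le e^y$ give
$$\ex{e^{sX}}=\prod_i\bigl(1+p_i(e^s-1)\bigr)\le \exp\bigl(\mu(e^s-1)\bigr).$$
Both tails then follow from Markov's inequality applied to $e^{\pm sX}$, a good choice of $s$, and one elementary calculus inequality each.

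\emph{Upper tail.} Dispose of $\lambda=0$ separately. In that case $\mu=0$, so $X=0$ almost surely and the bound is trivial for $t>0$; for $t=0$ we read the right-hand side as $1$. So assume $\lambda>0$, and take $s>0$, so that $e^s-1>0$ and $\mu\le\lambda$ gives $\ex{e^{sX}}\le\exp(\lambda(e^s-1))$. Markov's inequality then yields
$$\Pr{X\ge\lambda+t}\le\exp\bigl(\lambda(e^s-1)-s(\lambda+t)\bigr).$$
Choosing $s=\ln(1+x)$ with $x=t/\lambda$, this becomes $\exp(-\lambda h(x))$, where $h(x)=(1+x)\ln(1+x)-x$. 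It therefore suffices to show $h(x)\ge x^2/(2+x)$ for $x\ge 0$, since then $\lambda h(t/\lambda)\ge t^2/(2\lambda+t)$. Put $f(x)=h(x)-x^2/(2+x)$. Then $f(0)=0$ and
$$f'(x)=\ln(1+x)-\frac{x^2+4x}{(2+x)^2},$$
so $f'(0)=0$. Moreover
$$f''(x)=\frac{1}{1+x}-\frac{8}{(2+x)^3}\ge 0,$$
because $(2+x)^3=8+12x+6x^2+x^3\ge 8(1+x)$. Integrating twice from $0$ gives $f\ge 0$.

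\emph{Lower tail.} Take $s>0$ and apply Markov's inequality to $e^{-sX}$:
$$\Pr{X\le\eta-t}\le\exp\bigl(\mu(e^{-s}-1)+s(\eta-t)\bigr)\le\exp\bigl(\eta(e^{-s}-1)+s(\eta-t)\bigr),$$
where the second step uses $e^{-s}-1<0$ and $\mu\ge\eta$. If $t=0$ the claim is trivial. If $t=\eta>0$, directly $\Pr{X\le 0}=\prod_i(1-p_i)\le e^{-\mu}\le e^{-\eta}\le e^{-\eta/2}$. Otherwise $0<t<\eta$; set $x=t/\eta\in(0,1)$ and $s=-\ln(1-x)$. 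The exponent becomes $-\eta g(x)$ with $g(x)=(1-x)\ln(1-x)+x$. Since $g(0)=0$ and $g'(x)=-\ln(1-x)\ge x$, we get $g(x)\ge x^2/2$, and hence $\Pr{X\le\eta-t}\le\exp(-t^2/(2\eta))$.

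The only real obstacle is verifying the two calculus inequalities, $h(x)\ge x^2/(2+x)$ and $g(x)\ge x^2/2$, and both are handled by the derivative checks above. The rest is routine bookkeeping: the monotonicity replacements of $\mu$ by $\lambda$ (respectively $\eta$), which are legitimate because of the signs of $e^{\pm s}-1$, and the degenerate cases $\lambda=0$, $t=0$ and $t=\eta$.
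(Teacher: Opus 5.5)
Your argument is correct. It is the standard exponential-moment (Chernoff) proof, and you handle every step soundly: the two calculus inequalities $h(x)\ge x^2/(2+x)$ and $g(x)\ge x^2/2$, the sign-justified replacement of $\mu$ by $\lambda$ and by $\eta$, and the degenerate cases. The paper gives no proof of this statement; it quotes it from standard references (Alon--Spencer, Molloy--Reed), where it is derived by essentially this same moment-generating-function argument.
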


\noindent Also, at one spot, we need the following more general concentration result.

\begin{theorem}[McDiarmid's Inequality] \label{tool__mcdiarmid}
    Let $X$ be a random variable determined by $n$ independent trials $T_1, \ldots, T_n$,
    where for every $i\in[n]$, changing the outcome of $T_i$ can affect $X$ by at most $c_i$.
    For every non-negative real number $t$,
    $$\Pr{X<\ex{X}-t} \leq \exp\Bigl(-\frac{2t^2}{\sum_{i=1}^n c^2_i}\Bigr).$$
\end{theorem}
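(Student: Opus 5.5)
This is McDiarmid's inequality, a standard result. I will prove it by the Doob martingale and the Azuma--Hoeffding argument, via a one-sided bound on the exponential moment.

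First, define the exposure martingale $Z_i=\ex{X\mid T_1,\ldots,T_i}$ for $i=0,\ldots,n$. Then $Z_0=\ex{X}$ and $Z_n=X$. Write $D_i=Z_i-Z_{i-1}$.

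Next, fix $T_1,\ldots,T_{i-1}$. By independence of the trials, $Z_i$ is a function $g(T_i)$ obtained by averaging $X$ over the independent remaining trials. Changing $T_i$ changes $X$ by at most $c_i$ pointwise, so $g$ takes values in an interval $[a_i,a_i+c_i]$, where $a_i$ depends only on $T_1,\ldots,T_{i-1}$. Since $\ex{D_i\mid T_1,\ldots,T_{i-1}}=0$, Hoeffding's lemma gives
$$\ex{e^{-sD_i}\mid T_1,\ldots,T_{i-1}}\le e^{s^2c_i^2/8}\quad\text{for all } s>0.$$
Hoeffding's lemma is proved by convexity: bound $e^{-sx}$ on the interval by its chord, then show that the logarithm of the resulting expression, as a function of $u=sc_i$, has second derivative at most $1/4$ and vanishes together with its first derivative at $0$.

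Then, iterating conditional expectations from $i=n$ down to $1$ gives
$$\ex{e^{-s(X-\ex{X})}}\le \exp\Bigl(\tfrac{s^2}{8}\textstyle\sum_i c_i^2\Bigr).$$
By Markov's inequality,
$$\Pr{X<\ex{X}-t}\le \exp\Bigl(-st+\tfrac{s^2}{8}\textstyle\sum_i c_i^2\Bigr).$$
Choosing $s=4t/\sum_i c_i^2$ yields the bound $\exp\bigl(-2t^2/\sum_i c_i^2\bigr)$. If $t=0$ the bound is trivially $1$.

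I expect the main obstacle to be the range step. One must check that the conditional increment really lies in an interval of length $c_i$, not merely that $|D_i|\le c_i$. The weaker bound $|D_i|\le c_i$ would only give the constant $1/2$ in the exponent instead of $2$. The interval claim is where independence of the trials is used: averaging over the later trials with the same product measure for both values of $T_i$ preserves the pointwise Lipschitz bound.
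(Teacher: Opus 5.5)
The paper gives no proof of this statement. It quotes McDiarmid's inequality as a standard tool, with a pointer to standard references, so there is no internal argument to compare against.

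Your proof is correct. It is the standard argument: the Doob exposure martingale, the conditional Hoeffding lemma applied to $-sD_i$, the iterated exponential-moment bound, Markov's inequality, and the optimization at $s=4t/\sum_i c_i^2$.

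You identify the right key step. Given $T_1,\ldots,T_{i-1}$, the increment $D_i$ ranges over an interval of length $c_i$, not merely $[-c_i,c_i]$. This holds because, by independence, the same product measure on the later trials is averaged over for every value of $T_i$, so $\sup g-\inf g\le c_i$. That is exactly what gives the constant $2$ in the exponent.

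The paper does rely on this sharp constant. In part (ii) of Lemma~\ref{lemma__extransvers}, the displayed exponent $\frac{961m^2}{648\cdot 10^4\cdot 2m\Delta}$ is precisely $2t^2/(2m\Delta)$, with $t=\tfrac{31}{3600}m$ and $\sum_i m_i^2\le 2m\Delta$.

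The only loose end is the degenerate case $\sum_i c_i^2=0$, where your choice of $s$ is undefined. There $X$ is constant, since one can change the trials one at a time without changing $X$. So the probability is $0$ for $t>0$, which matches the usual convention that the right-hand side is then $0$.
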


We also work with uniformly random correspondence colorings, where each of the possible $(L,\Pi)$-colorings of the considered graph $G$
is picked with the same probability.  The basic tool to argue about properties of such uniform colorings is \emph{resampling}.  Consider a function $f$ that
to each $(L,\Pi)$-coloring~$\psi$ of $G$ assigns a subset $f(\psi)$ of $V(G)$; we call such a function a \emph{subset selector}.
For a set $S\subseteq V(G)$, an $(L,\Pi)$-coloring $\psi'$ is an \emph{$S$-modification} of $\psi$ if $\psi'(v)=\psi(v)$ holds for every $v\in V(G)\setminus S$.
An $(L,\Pi)$-coloring $\psi''$ is an \emph{$f$-invariant modification}
of $\psi$ if $\psi''$ is an $f(\psi)$-modification of $\psi$ such that $f(\psi'')=f(\psi)$;
that is, we change the coloring only on the vertices of the set $f(\psi)$, but we are careful not to change the value of $f$.
An \emph{$f$-invariant resampling} of $\psi$ is an $f$-invariant modification of $\psi$ chosen uniformly at random,
i.e., each of the $f$-invariant modifications of $\psi$ is picked with the same probability.

By the following observation, $f$-invariant resampling preserves the uniformity of a random $(L,\Pi)$-coloring;
thus, to show that a uniform $(L,\Pi)$-coloring $\psi$ of $G$ has some property with high probability, we can
instead argue that an $f$-invariant resampling of $\psi$ does.
This observation can also be seen as a consequence of the so-called {\em spatial Markov property} for the hard-core model, see e.g.~\cite{DaKa25+}.

\begin{lemma}\label{tool__resampling}
Let $G$ be a graph, let $(L,\Pi)$ be a correspondence assignment for $G$, and let $f$ be a subset selector.
Let $\psi$ be a uniformly random $(L,\Pi)$-coloring of $G$, and let $\psi'$ be an $f$-invariant resampling of $\psi$.
Then $\psi'$ is also a uniformly random $(L,\Pi)$-coloring of $G$.
\end{lemma}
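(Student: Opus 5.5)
The plan is to show that for every fixed $(L,\Pi)$-coloring $\varphi$ of $G$ we have $\Pr{\psi'=\varphi}=1/N$, where $N$ is the number of $(L,\Pi)$-colorings of $G$ (if $N=0$ the statement is vacuous). We partition the set $\CC$ of all $(L,\Pi)$-colorings into classes by declaring $\psi\equiv\psi''$ if $\psi''$ is an $f$-invariant modification of $\psi$. The first step is to verify that this is an equivalence relation.

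\emph{Reflexivity} is clear. For \emph{symmetry}, suppose $\psi''$ is an $f$-invariant modification of $\psi$. Then $f(\psi'')=f(\psi)$, and $\psi,\psi''$ agree outside $f(\psi)=f(\psi'')$. Hence $\psi$ is an $f(\psi'')$-modification of $\psi''$ with $f(\psi)=f(\psi'')$. For \emph{transitivity}, if $\psi\equiv\psi''\equiv\psi'''$, all three have the same value $S$ of $f$ and pairwise agree outside $S$, so $\psi\equiv\psi'''$. Thus each class $K$ consists of colorings sharing a common value $f$ on the class, and for $\psi\in K$ the set of $f$-invariant modifications of $\psi$ is exactly $K$.

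Given this, the computation is direct. Fix $\varphi$ and let $K$ be its class. The resampling $\psi'$ equals $\varphi$ only if $\psi\in K$, and conditioned on any $\psi\in K$ the resampling picks $\varphi$ with probability $1/|K|$. Therefore
$$\Pr{\psi'=\varphi}=\sum_{\psi\in K}\frac{1}{N}\cdot\frac{1}{|K|}=\frac{1}{N}.$$
This shows $\psi'$ is uniform.

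The only point needing care, which I expect to be the main (mild) obstacle, is the symmetry step. It uses the requirement $f(\psi'')=f(\psi)$ in the definition of an $f$-invariant modification: without it, the "reverse" move from $\psi''$ back to $\psi$ might be allowed to change a different set, and the classes would not be well defined. Everything else is routine counting.
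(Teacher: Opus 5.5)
Your proof is correct and is essentially the paper's argument. Your equivalence class $K$ of $\varphi$ is exactly the set of colorings $\psi$ with $f(\psi)=f(\varphi)$ that agree with $\varphi$ outside $f(\varphi)$, whose size the paper denotes $n_{S,\sigma}$, and both proofs then compute $\Pr{\psi'=\varphi}=\frac{|K|}{N}\cdot\frac{1}{|K|}=\frac{1}{N}$; your explicit check that $\equiv$ is an equivalence relation just spells out what the paper uses implicitly, namely that every $\psi$ in this set has this same set as its $f$-invariant modifications.
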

\begin{proof}
Let $n$ be the number of $(L,\Pi)$-colorings of $G$, and for each set $S\subseteq V(G)$ and every $(L,\Pi)$-coloring $\sigma$ of $G-S$,
let $n_{S,\sigma}$ be the number of $(L,\Pi)$-colorings $\varphi$ of $G$ such that $f(\varphi)=S$ and $\varphi$ extends $\sigma$.

Consider a fixed $(L,\Pi)$-coloring $\varphi$ of $G$, let $S=f(\varphi)$, and let $\sigma$ be the restriction of $\varphi$ to $V(G)\setminus S$.
For the $f$-invariant resampling $\psi'$ of $\psi$ to be equal to $\varphi$, we must have
\begin{itemize}
\item $f(\psi)=S$ and $\psi$ must extend $\sigma$, 
\end{itemize}
which happens with probability $\frac{n_{S,\sigma}}{n}$, and furthermore
\begin{itemize}
\item $\psi'$ must be chosen to be $\varphi$ from all $n_{S,\sigma}$ colorings with these properties, 
\end{itemize}
which happens with probability~$\tfrac{1}{n_{S,\sigma}}$. Thus,
$$\Pr{\psi'=\varphi}=\frac{n_{S,\sigma}}{n}\cdot\frac{1}{n_{S,\sigma}}=\frac{1}{n}.$$
Since this holds for every $(L,\Pi)$-coloring $\varphi$ of $G$, we conclude that $\psi'$ is a uniformly random $(L,\Pi)$-coloring of $G$.
\end{proof}

% Forbidden induced subgraphs
\newpage

 \section{Forbidden induced subgraphs} \label{section_03}

For a positive integer $\Delta$, a \emph{$\Delta$-counterexample} is a triple $(G,L,\Pi)$,
where $G$ is a graph of maximum degree at most $\Delta$ and clique number at most $\Delta-1$
and $(L,\Pi)$ is a full exact $(\Delta-1)$-correspondence assignment for $G$ such that $G$ is not $(L,\Pi)$-colorable.
We aim to prove Theorem~\ref{thm__bk_conjecture_is_true_asym_for_DP_coloring} by showing that
no $\Delta$-counterexamples exist for any sufficiently large $\Delta$.
For contradiction, we suppose that a $\Delta$-counterexample exists, and we consider a \emph{minimal} one,
i.e., one with $|V(G)|$ smallest possible.  In this section, we restrict the structure of minimal $\Delta$-counterexamples
by finding a number of \emph{reducible configurations}, induced subgraphs that cannot appear in a minimal $\Delta$-counterexample,
and using them to argue about the neighborhoods of large cliques.
Let us start with two simple observations.

\begin{observation}\label{obs__mindeg}
    For any positive integer $\Delta$, if $(G,L,\Pi)$ is a minimal $\Delta$-counterexample, then $\delta(G)\ge \Delta-1$.
\end{observation}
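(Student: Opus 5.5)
Suppose for contradiction that some vertex $v\in V(G)$ has $\deg_G v\le \Delta-2$, and consider the graph $G-v$. I will show that $G-v$ is $(L,\Pi)$-colorable and that any such coloring extends to $v$, which contradicts the assumption that $G$ is not $(L,\Pi)$-colorable.

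First, $G-v$ is an induced subgraph of $G$. It therefore has maximum degree at most $\Delta$ and clique number at most $\Delta-1$. Restrict $(L,\Pi)$ to $G-v$; this restriction is still a full exact $(\Delta-1)$-correspondence assignment, since lists are unchanged and every remaining edge keeps its perfect matching. If $G-v$ were not $(L,\Pi)$-colorable, then $(G-v,L,\Pi)$ would be a $\Delta$-counterexample with fewer vertices than $G$, contradicting minimality. Hence $G-v$ has an $(L,\Pi)$-coloring $\psi$.

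Now extend $\psi$ to $v$. Each neighbor $u$ of $v$ has color $\psi(u)$, which blocks at most one color of $L(v)$, because $\Pi(uv)$ is a matching. So at most $\deg_G v\le \Delta-2$ colors of $L(v)$ are blocked, while $|L(v)|=\Delta-1$. At least one color of $L(v)$ therefore remains unblocked, and assigning it to $v$ gives an $(L,\Pi)$-coloring of $G$. This contradicts the definition of a counterexample. Equivalently, $\exc_\psi(v)\ge(\Delta-1)-(\Delta-2)>0$ for the empty partial coloring, and Observation~\ref{obs__rich} applies directly.

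The argument is routine. The only point needing care is that the restricted triple is a $\Delta$-counterexample in the paper's precise sense, that is, that it remains full and exact. This holds because lists are unchanged and matchings on surviving edges stay perfect. The degree bound $\delta(G)\ge\Delta-1$ follows.
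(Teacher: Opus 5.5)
Your proof is correct and is the same argument as the paper's: by minimality $G-v$ has an $(L,\Pi)$-coloring, and it extends greedily to $v$ because $|L(v)|=\Delta-1>\deg v$. Your extra check that the restriction to $G-v$ is still a full exact $(\Delta-1)$-correspondence assignment is a detail the paper leaves implicit.
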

\begin{proof}
If $G$ contained a vertex of degree at most $\Delta-2$, then note that $G-v$ would have an $(L,\Pi)$-coloring by the minimality
of the $\Delta$-counterexample, and that this coloring could be extended greedily to an $(L,\Pi)$-coloring of $G$,
since $|L(v)|\ge \Delta-1>\deg v$.
\end{proof}

\begin{observation} \label{obs__forbidden_d1_corr_col_ind_subgraph}
    For any positive integer $\Delta$, if $(G,L,\Pi)$ is a minimal $\Delta$-counterexample, then no induced subgraph $H$ of $G$ is $\minusjedna_H$-correspondence colorable.
\end{observation}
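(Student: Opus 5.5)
The plan is a short minimality argument built on Observation~\ref{obs__combine} and Observation~\ref{obs__makesminusjedna}. Suppose, for contradiction, that some induced subgraph $H$ of $G$ is $\minusjedna_H$-correspondence colorable. We may assume $V(H)\neq\emptyset$, since the empty graph is trivially colorable and gives nothing.

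First we color the rest of the graph. Let $G'=G-V(H)$. This is an induced subgraph with fewer vertices, maximum degree at most $\Delta$ and clique number at most $\Delta-1$. Restrict $(L,\Pi)$ to $G'$; it is still a full exact $(\Delta-1)$-correspondence assignment, because the restriction keeps every list and every matching on the remaining edges. By the minimality of $(G,L,\Pi)$, the triple $(G',L,\Pi)$ is not a $\Delta$-counterexample. Hence $G'$ has an $(L,\Pi)$-coloring $\psi$, and $\psi$ is a partial $(L,\Pi)$-coloring of $G$ with $\dom(\psi)=V(G)\setminus V(H)$.

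Next we check that the colors left for $H$ suffice. Note that $G-\dom(\psi)=H$. Since $(L,\Pi)$ is a $(\Delta-1)$-correspondence assignment and $G$ has maximum degree at most $\Delta$, Observation~\ref{obs__makesminusjedna} shows that $(L_\psi,\Pi_\psi)$ is a $\minusjedna_H$-correspondence assignment for $H$. Concretely, each $v\in V(H)$ loses at most $|N(v)\setminus V(H)|$ colors, so
$$|L_\psi(v)|\ge \Delta-1-(\deg_G v-\deg_H v)\ge \deg_H v-1.$$

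Finally we extend. By the assumption on $H$, there is an $(L_\psi,\Pi_\psi)$-coloring $\psi'$ of $H$. Observation~\ref{obs__combine} then says that $\psi\cup\psi'$ is an $(L,\Pi)$-coloring of $G$, contradicting the fact that $(G,L,\Pi)$ is a counterexample.

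There is no real obstacle here. The only step needing care is confirming that the restricted triple has the form required for the minimality hypothesis to apply: it must be full and exact, with clique number and maximum degree inherited from $G$.
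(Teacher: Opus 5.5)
Your proof is correct and is essentially the paper's argument: you color $G-V(H)$ by minimality, note via Observation~\ref{obs__makesminusjedna} that $(L_\psi,\Pi_\psi)$ is a $\minusjedna_H$-correspondence assignment, and combine via Observation~\ref{obs__combine}; the paper phrases this contrapositively rather than by contradiction. One small remark: for empty $H$ the statement is literally false rather than ``giving nothing'', so it should be read for nonempty $H$, which both you and the paper implicitly do.
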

\begin{proof}
Since $(G,L,\Pi)$ is a minimal $\Delta$-counterexample, there exists an $(L,\Pi)$-coloring $\psi$ of $G-V(H)$.
Since $\psi$ does not extend to an $(L,\Pi)$-coloring of $G$, Observation~\ref{obs__combine} implies that $H$ is not $(L_\psi,\Pi_\psi)$-colorable.
Since $(L_\psi,\Pi_\psi)$ is a $\minusjedna_H$-correspondence assignment by Observation~\ref{obs__makesminusjedna},
the induced subgraph $H$ is not $\minusjedna_H$-correspondence colorable.
\end{proof}

Given Observation~\ref{obs__forbidden_d1_corr_col_ind_subgraph}, our focus now will be on showing that specific graphs are $\minusjedna$-correspondence
colorable.  Let us remark that, with the exception of Lemma~\ref{lemma__no_bad_vertex_in_D-1_clique}, the analogous results are known to be true in the list coloring setting~\cite{CKR23},
and our proofs of these results are essentially just reformulations to the correspondence coloring setting.

\subsection{Graphs with many universal vertices}

We first aim to show that for any graph $H$ with many universal vertices, if $\omega(H)\le |V(H)|-2$, then $H$ is $\minusjedna$-correspondence
colorable.  Let us start by proving several special cases of this claim.

\begin{lemma} \label{lemma__K3_d1_corr_col}
    For every integer $t \geq 6$, the graph $G=K_t \vee \overline{K_3}$ is $\minusjedna$-correspondence colorable.
\end{lemma}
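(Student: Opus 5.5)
The plan is to colour the three independent vertices first, in a coordinated way, and then finish greedily via Observation~\ref{obs__greedy}. Write $V(K_t)=\{v_1,\ldots,v_t\}$ and let $x,y,z$ be the vertices of $\overline{K_3}$. Then each $v_i$ has degree $t+2$ and each of $x,y,z$ has degree $t$. It suffices to consider a full exact $\minusjedna$-correspondence assignment $(L,\Pi)$, so $|L(v_i)|=t+1$ and $|L(x)|=|L(y)|=|L(z)|=t-1$, and every matching is perfect on the smaller side.

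\textbf{Reduction to a collision condition.} Colour $x,y,z$ by a partial colouring $\psi$; this is always proper, since they are pairwise non-adjacent. Each clique vertex $v$ then has $t-1$ uncoloured neighbours. Let $b_v$ be the number of distinct colours of $L(v)$ blocked by $\psi(x),\psi(y),\psi(z)$. Then $\exc_\psi(v)=(t+1-b_v)-(t-1)=2-b_v$.

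By Observation~\ref{obs__greedy}, applied to an ordering of the clique whose last two vertices are $w$ and then $v$ (every earlier clique vertex has both of them as later neighbours), $\psi$ extends provided two conditions hold. First, $\exc_\psi(v)\ge 1$, i.e.\ $b_v\le1$: all three colours block the same colour at $v$. Second, $\exc_\psi(w)\ge0$, i.e.\ $b_w\le2$: at least one collision at $w$. So the goal is a choice of colours for $x,y,z$ producing a triple collision at some $v$ and a collision at some other $w$.

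\textbf{Step 1: the triple collision at $v$.} Fix $v=v_1$. Since $\Pi$ is full, the colours of $x$ block a set $S_x\subseteq L(v)$ of exactly $t-1$ colours; define $S_y$ and $S_z$ likewise. Each of these sets misses only two colours of the $(t+1)$-element list $L(v)$, so
$$|S_x\cap S_y\cap S_z|\ge t+1-6=t-5\ge 1.$$
This is exactly where $t\ge6$ is used. For any $\beta\in S_x\cap S_y\cap S_z$, give $x,y,z$ the preimages of $\beta$. Then $b_v=1$ and $\exc_\psi(v)=1$.

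\textbf{Step 2: a collision at a second vertex (the main obstacle).} We still need some $w\ne v$ with $b_w\le2$, i.e.\ two of the three chosen colours blocking the same colour at $w$.

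When $t\ge7$ there are at least two admissible choices of $\beta$, and we can also vary which clique vertex plays the role of $v$. If no choice works, then for every admissible $\beta$ and every $w\ne v$ the three preimages of $\beta$ block three distinct colours at $w$. I would try to show this rigidity is contradictory by counting: consider the $t-1$ vertices $w$ and the injections $L(x),L(y),L(z)\to L(w)$ into a list of size $t+1$, and compare with the freedom in choosing $v$ and $\beta$.

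For the boundary case, or if the counting gets messy, I would use a weaker fallback. Colour only $x$ and $y$ with preimages of a common $\beta\in S_x\cap S_y$; there are at least $t-3\ge3$ such choices. This gives $\exc_\psi(v)=0$, and in $G-\{x,y\}$ the vertex $z$ has list size $t-1$ and $t$ uncoloured neighbours. Then choose $z$'s colour afterwards, using Observation~\ref{obs__improve}, so that it either blocks no colour of $L_\psi(v)$ or collides at $v$ or at some second vertex $w$; Observation~\ref{obs__blocking_at_most_one_color} can supply the same kind of collision at $w$.

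I expect this second-collision step to need a small case analysis, possibly after straightening the star centred at $v$ via Observation~\ref{obs__straightening} so that the relevant matchings become identities. It is the heart of the proof. Everything else is the routine greedy completion described above.
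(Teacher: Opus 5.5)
\textbf{Step 2 is the gap, and it cannot be closed: it is false in general.} Your reduction is correct. It also pins down what the strategy needs. After colouring a set $A\subseteq\{x,y,z\}$, a clique vertex has $\exc_\psi(v)=|A|-1-b_v$. With a full assignment, a triple collision is therefore the only way to give a clique vertex positive excess, and Observation~\ref{obs__greedy} then needs a collision at a second clique vertex.

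Your fallback does not escape this. Once $x,y$ are coloured, the colour of $z$ blocks exactly one colour at $v$. So Observation~\ref{obs__improve} raises $\exc(v)$ only if that colour completes a triple collision. Observation~\ref{obs__blocking_at_most_one_color} is also unavailable, since no two non-adjacent uncoloured vertices remain.

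Already for $t=6$ there is an assignment where Step 2 fails. Identify each of $L(x),L(y),L(z)$ with $\{0,\dots,4\}$ and give every $v_j$ a list of size $7$. At $v_j$, the colours of the triple $T_j=(a,b,c)$ (colour $a$ at $x$, $b$ at $y$, $c$ at $z$) all block one common colour. Each listed pair blocks one further common colour. Each of the $15$ colours of $x,y,z$ is used exactly once per row, and the clique edges are arbitrary.
\begin{center}
\begin{tabular}{c|c|c|c|c}
$j$ & $T_j$ & $xy$-pairs & $xz$-pairs & $yz$-pairs\\
\hline
$1$ & $(0,0,0)$ & $(1,2),(2,1)$ & $(3,2),(4,4)$ & $(3,3),(4,1)$\\
$2$ & $(1,1,2)$ & $(0,2),(2,0)$ & $(3,0),(4,1)$ & $(3,3),(4,4)$\\
$3$ & $(2,2,4)$ & $(0,3),(1,0)$ & $(3,2),(4,1)$ & $(1,3),(4,0)$\\
$4$ & $(3,3,1)$ & $(0,2),(1,0)$ & $(2,0),(4,2)$ & $(1,3),(4,4)$\\
$5$ & $(4,4,3)$ & $(0,2),(1,0)$ & $(2,1),(3,2)$ & $(1,0),(3,4)$\\
$6$ & $(0,1,1)$ & $(1,0),(2,3)$ & $(3,3),(4,4)$ & $(2,0),(4,2)$
\end{tabular}
\end{center}
No two $T_j$ share two coordinates, and no listed pair lies inside any $T_i$. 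So a triple collision at $v_i$ forces $(\psi(x),\psi(y),\psi(z))=T_i$, and these three colours then block three distinct colours at every $v_j$ with $j\neq i$. Whatever $x,y,z$ receive, either no clique vertex has positive excess, or exactly one does and all others have excess $-1$. In both cases Observation~\ref{obs__greedy} does not apply, so no counting or case analysis will complete your Step 2.

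\textbf{The missing idea} is to make an independent vertex, not a clique vertex, the one with positive excess. That requires colouring clique vertices first. Since $|L(v_1)|=|L(v_2)|=|L(x)|+2$, each of $v_1,v_2$ has two colours blocking nothing at $x$. A colour at $v_1$ blocks at most one colour at $v_2$, so you can pick a compatible pair, which gives $\exc(x)=1$.

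Now $y,z$ are non-adjacent with $|L_{\psi_1}(y)|+|L_{\psi_1}(z)|\ge 2(t-3)>t-1$. Observation~\ref{obs__blocking_at_most_one_color} then colours them so that $\exc(v_3)\ge 0$. Observation~\ref{obs__greedy} with the order $v_4,\dots,v_t,v_3,x$ finishes.

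This is the paper's proof. The inequality $2(t-3)>t-1$ is where $t\ge 6$ is genuinely needed.
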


\begin{proof}
    Suppose that $V(K_t) = \{x_1, x_2, \ldots, x_t\}$ and $V(\overline{K_3}) = \{a, b, c\}$. Let $(L, \Pi)$ be an exact $\minusjedna$-correspondence assignment for $G$;
    we have $|L(x_i)|=t+1$ for $i\in [t]$ and $|L(a)|=|L(b)|=|L(c)|=t-1$.
    
    Since $|L(x_1)|=|L(x_2)|=|L(a)|+2$, for each $i\in [2]$, there exist distinct colors $\alpha_i,\alpha'_i\in L(x_i)$ which block no colors at $a$.
    Since $\alpha_1$ at $x_1$ blocks at most one color at $x_2$, we can assume that it does not block $\alpha_2$ at $x_2$.
    Let $\psi_1$ be the partial $(L, \Pi)$-coloring of $G$ giving $x_1$ the color $\alpha_1$ and $x_2$ the color $\alpha_2$.
    The choice of $\alpha_1$ and $\alpha_2$ ensures that $a$ has $\psi_1$-excess $1$.

    Note that $\exc_{\psi_1}(x_3)\ge -1$, $\deg^{-\psi_1} x_3=t$, and since $t\ge 6$, we have
    $$|L_{\psi_1}(b)| + |L_{\psi_1}(c)|\ge (t-3)+(t-3)>t-1.$$
    By Observation~\ref{obs__blocking_at_most_one_color}, we can extend $\psi_1$ to a partial $(L, \Pi)$-coloring $\psi$ by choosing colors for $b$ and $c$ so that
    $x_3$ has non-negative $\psi$-excess.  Moreover, we have $\exc_{\psi}(a)\ge \exc_{\psi_1}(a)=1$.
    Observation~\ref{obs__greedy} with the ordering $x_4$, \ldots, $x_t$, $x_3$, $a$
    shows that $\psi$ extends to an $(L, \Pi)$-coloring of $G$.

    Therefore, the graph $G$ is $\minusjedna$-correspondence colorable.
\end{proof}

Next, we show another lemma with essentially the same proof.

\begin{lemma} \label{lemma__P4_2K2_d1_corr_col}
    For every integer $t \geq 5$ and each $H \in \{P_4, 2K_2\}$, the graph $G=K_t \vee H$ is $\minusjedna$-correspondence colorable.
\end{lemma}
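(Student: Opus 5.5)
The plan is to copy the proof of Lemma~\ref{lemma__K3_d1_corr_col}. We \emph{raise the excess} of a few vertices by suitable partial colorings, using Observations~\ref{obs__improve} and~\ref{obs__blocking_at_most_one_color}, and then finish greedily with Observation~\ref{obs__greedy}. Write $V(K_t)=\{x_1,\ldots,x_t\}$ and fix an exact $\minusjedna$-correspondence assignment $(L,\Pi)$.

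\textbf{The case $H=2K_2$.} Let $E(H)=\{ab,cd\}$, so $|L(x_i)|=t+2$ and $|L(a)|=|L(b)|=|L(c)|=|L(d)|=t$.
\begin{enumerate}
\item Since $|L(x_i)|=|L(a)|+2$, each of $x_1,x_2$ has two colors blocking nothing at $a$. We choose such colors for $x_1$ and $x_2$ not conflicting with each other, exactly as in Lemma~\ref{lemma__K3_d1_corr_col}. This gives a coloring $\psi_1$ with $\exc_{\psi_1}(a)=1$.
\item The vertices $b$ and $c$ are non-adjacent common neighbours of $x_3$, with $|L_{\psi_1}(b)|+|L_{\psi_1}(c)|\ge 2t-4$ and $\deg^{-\psi_1}x_3=t+1$. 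Since $t\ge5$, we have $2t-4>t=\deg^{-\psi_1}x_3-1$. Observation~\ref{obs__blocking_at_most_one_color} with $k=-1$ then colors $b$ and $c$ so that $x_3$ gets excess at least $0$.
\item We finish with Observation~\ref{obs__greedy} and the ordering $x_4,\ldots,x_t,d,x_3,a$. Each of $x_4,\ldots,x_t,d$ is adjacent to both $x_3$ and $a$ (note $d\sim a$ is false, but $d$ is adjacent to $x_3$, and $d$'s only other uncolored later neighbour is $a$? no; we rely on $d\sim x_3$ and the excess bound $\ge -1$ together with $d$'s list). So at this step we must check carefully that $d$ has the two later neighbours or positive excess that Observation~\ref{obs__greedy} requires; if not, we reorder so that the last two vertices are common neighbours of everything earlier.
\end{enumerate}
In the last two positions, $x_3$ has excess $\ge0$ and $a$ has excess $\ge1$, so the greedy procedure succeeds there.

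\textbf{The case $H=P_4$.} Let $H=a\,b\,c\,d$. Now $|L(a)|=|L(d)|=t$ and $|L(b)|=|L(c)|=t+1$. The pairs of non-adjacent vertices are $\{a,c\}$, $\{b,d\}$ and $\{a,d\}$. I would try to produce two boosts aimed at the final two vertices of the greedy order, namely an inner vertex (say $b$) and one $x_i$.
\begin{enumerate}
\item Color $x_1$ with a color blocking nothing at $b$. Such a color exists because $|L(x_1)|>|L(b)|$.
\item Color the non-adjacent pair $a,d$ via Observation~\ref{obs__blocking_at_most_one_color}, aimed at $x_t$.
\item Find a second boost for $b$. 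One option is the pair $(a,c)$, which is non-adjacent and consists of neighbours of $b$. Another is choosing $x_2$'s color so that $x_1$ and $x_2$ together block at most one color at $b$; the sizes $|L(x_2)|=t+2>|L(b)|$ are what should make this possible.
\item Finish with Observation~\ref{obs__greedy}, using an ordering such as $x_2,\ldots,x_{t-1},c,x_t,b$. Every earlier vertex is then adjacent to both $x_t$ and $b$.
\end{enumerate}

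\textbf{Main obstacle.} The main obstacle is exactly this bookkeeping in the $P_4$ case. The vertex that ends the order must reach excess $1$, and the second-to-last must reach excess $0$. At the same time, the non-adjacent pairs available in $P_4$ must be used so that they do not reduce the lists of the remaining vertices too much. I would try several assignments of the roles of $a,b,c,d$, using $t\ge5$ in every list-size inequality, until the two boosts and a valid ordering fit together.
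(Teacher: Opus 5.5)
Your $P_4$ case, which you leave open yourself, is a genuine gap, and the plan you sketch for it does not close. Aiming the final boost at the inner vertex $b$ is the wrong choice. Since $|L(x_i)|-|L(b)|=1$, each $x_i$ is guaranteed only one color blocking nothing at $b$. In a full assignment, the unique such colors of $x_1$ and $x_2$ may conflict on $x_1x_2$. Choosing $x_2$ so that $x_1$ and $x_2$ ``together block at most one color'' at $b$ adds nothing beyond the unit of excess you already gained from $x_1$. The pair $(a,c)$ is unavailable, because your step~2 has already colored $a$. After that, the uncolored neighbors $x_2,\ldots,x_t,c$ of $b$ form a clique, so Observation~\ref{obs__blocking_at_most_one_color} cannot supply a second boost. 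If you instead spend $(a,c)$ on $b$, the only non-edge left among the uncolored vertices is $bd$, which contains your final vertex, so no $x_i$ can be boosted.

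The missing idea is to end with an \emph{endpoint} $a$ of the path, exactly as for $2K_2$. Since $|L(a)|=t=|L(x_i)|-2$, each of $x_1,x_2$ has two colors blocking nothing at $a$. So they can be chosen non-conflicting, and $a$ gets excess $1$ at once. The non-edge $bd$ avoids $a$, and it boosts $x_3$ because $|L_{\psi_1}(b)|+|L_{\psi_1}(d)|\ge 2t-4>t$. The ordering $c,x_4,\ldots,x_t,x_3,a$ then finishes. This is the paper's proof, and it handles $P_4$ and $2K_2$ simultaneously: it uses only a degree-$1$ vertex of $H$ and a non-edge of $H$ avoiding it.

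Your $2K_2$ ordering $x_4,\ldots,x_t,d,x_3,a$ also fails at $d$. Its only later neighbor is $x_3$ (as $d\nsim a$), and its excess may be $-1$. No reordering can make $x_3,a$ common neighbors of $d$. The fix is to place $d$ first, where it has the $t-2\ge 3$ later neighbors $x_3,\ldots,x_t$.
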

\begin{proof}
    Suppose that $V(K_t) = \{x_1, x_2, \ldots, x_t\}$ and $V(H) = \{a, b, c, d\}$, where $\deg_H a = 1$ and $b \nsim d$.
    Let $(L, \Pi)$ be an exact $\minusjedna$-correspondence assignment for $G$;
    we have $|L(x_i)|=t+2$ for $i\in [t]$, $|L(y)|\ge t-1$ for $y\in V(H)$, and $|L(a)|=t$.

    Since $|L(x_1)|=|L(x_2)|=|L(a)|+2$, we can choose colors $\alpha_1\in L(x_1)$ and $\alpha_2\in L(x_2)$ so that $\alpha_1$ at $x_1$ and $\alpha_2$ at $x_2$
    block no colors at $a$ and $\{(x_1,\alpha_1),(x_2,\alpha_2)\}\not\in \Pi(x_1x_2)$.
    Let $\psi_1$ be the partial $(L, \Pi)$-coloring of $G$ giving $x_1$ the color $\alpha_1$ and $x_2$ the color $\alpha_2$.
    The choice of $\alpha_1$ and $\alpha_2$ ensures that $a$ has $\psi_1$-excess~$1$.

    Note that $\exc_{\psi_1}(x_3)\ge -1$, $\deg^{-\psi_1} x_3=t+1$, and
    $$|L_{\psi_1}(b)| + |L_{\psi_1}(d)| \ge (t-2) + (t-2)>t,$$
    since $t\ge 5$.
    By Observation~\ref{obs__blocking_at_most_one_color}, we can extend $\psi_1$ to a partial $(L, \Pi)$-coloring $\psi$ by choosing colors for $b$ and $d$ 
    so that $x_3$ has non-negative $\psi$-excess.  Moreover, we have $\exc_{\psi}(a)\ge \exc_{\psi_1}(a)=1$.
    Observation~\ref{obs__greedy} with the ordering $c$, $x_4$, \ldots, $x_t$, $x_3$, $a$
    shows that $\psi$ extends to an $(L, \Pi)$-coloring of $G$.

    Therefore, the graph $G$ is $\minusjedna$-correspondence colorable.
\end{proof}

Next, we want to consider the more complicated case of $K_t \vee C_4$.  Let us start with an auxiliary lemma.
\begin{lemma} \label{lemma__C4_d1_corr_col__claim3}
    The graph $G=K_1 \vee C_4$ is 3-correspondence colorable.
\end{lemma}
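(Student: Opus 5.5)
The plan is to prove the statement by a short case analysis built on Observation~\ref{obs__greedy}. Let $x$ be the center and $a,b,c,d$ the $4$-cycle in this order, so that $a\nsim c$ and $b\nsim d$. Let $(L,\Pi)$ be a full exact $3$-correspondence assignment; it suffices to consider these. The idea is to color a non-adjacent pair first, say $a$ and $c$, so that two things happen simultaneously:
\begin{itemize}
\item $\psi(a)$ and $\psi(c)$ together block at most one color at $x$;
\item they also together block at most one color at $b$.
\end{itemize}
In that case, with $\psi$ the resulting partial coloring:
\begin{itemize}
\item $x$ has $|L_\psi(x)|\ge 2$ and two uncolored neighbours, so $\exc_\psi(x)\ge 0$;
\item $b$ has $|L_\psi(b)|\ge 2$ and one uncolored neighbour, so $\exc_\psi(b)\ge 1$;
\item $d$ has $|L_\psi(d)|\ge 1$ and one uncolored neighbour, so $\exc_\psi(d)\ge 0$.
\end{itemize}
Observation~\ref{obs__greedy} with the ordering $d,x,b$ then finishes. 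Here $d$ has the later neighbour $x$, $x$ has the later neighbour $b$, and $b$ has positive excess. By symmetry, it equally suffices to make the choice work for $x$ and $d$ instead of $x$ and $b$, or to swap the roles of the pairs $\{a,c\}$ and $\{b,d\}$.

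To find such colors, use that the assignment is full and exact. The matching $\Pi(ax)$ is then a bijection $f_x\colon L(a)\to L(x)$, and similarly $g_x\colon L(c)\to L(x)$. A pair $(\alpha,\gamma)$ blocks at most one color at $x$ if and only if $\gamma=\sigma_x(\alpha)$, where $\sigma_x=g_x^{-1}f_x$ is a bijection $L(a)\to L(c)$. In the same way we obtain bijections $\sigma_b,\sigma_d\colon L(a)\to L(c)$. Renaming colors (Observation~\ref{obs__renaming_at_vertex}), we may identify $L(a)=L(c)=[3]$ and view these as permutations of $[3]$. We are done unless $\sigma_x$ disagrees everywhere with both $\sigma_b$ and $\sigma_d$. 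The permutations of $[3]$ disagreeing everywhere with $\sigma_x$ are exactly $\sigma_x\rho$ with $\rho$ a $3$-cycle. So in the bad case $\sigma_b,\sigma_d\in\{\sigma_x\rho,\sigma_x\rho^{2}\}$, a very rigid configuration. The same analysis applies to the pair $b,d$, with permutations $\tau_x,\tau_a,\tau_c\colon L(b)\to L(d)$, and we may also assume that this configuration is bad.

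The main obstacle is this doubly rigid case. There I would straighten a spanning tree, say the star at $x$, using Observation~\ref{obs__straightening}, so that all lists are $[3]$ and the spokes are identity matchings. The rigidity conditions then force each cycle edge to carry one of a few specific permutations: the bad case on the pair $a,c$ says $\Pi(ab)$ composed with $\Pi(bc)$ is a $3$-cycle, and similarly around the other vertices. In this situation I would color $x$ directly. Each cycle vertex then keeps a $2$-list, and a $4$-cycle with $2$-lists is correspondence colorable unless the composition of the matchings around the cycle forms a single $4$-cycle on the $8$ list-vertex pairs (the twisted case). I expect that among the three choices of color for $x$, the $3$-cycle structure forced above prevents all three from producing a twisted $C_4$. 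Checking this is a finite computation, which I would carry out explicitly, possibly aided by the symmetry $\rho\leftrightarrow\rho^2$.
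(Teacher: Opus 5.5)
Your outline is sound and takes a different route from the paper, but its decisive last step is only stated as an expectation ("I expect \ldots a finite computation"). That step does hold, and it needs no computation.

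With the star at $x$ straightened, let $\pi_{ab},\pi_{bc},\pi_{cd},\pi_{da}$ be the permutations of $[3]$ carried by the cycle edges, so that $\alpha$ at $a$ blocks $\pi_{ab}(\alpha)$ at $b$. Then $\sigma_x$ is the identity and $\sigma_b=\pi_{bc}\pi_{ab}$. So the bad case for the pair $\{a,c\}$ says exactly that $\pi_{bc}\pi_{ab}$ has no fixed point, and hence no color $\xi$ is fixed by both $\pi_{ab}$ and $\pi_{bc}$.

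Now color $x$ by an arbitrary $\xi$. Every cycle vertex keeps the list $[3]\setminus\{\xi\}$. If, say, $\pi_{ab}(\xi)\neq\xi$, then $\pi_{ab}^{-1}(\xi)$ is available at $a$ and blocks nothing in the reduced list of $b$, so Observation~\ref{obs__greedy} finishes. The case $\pi_{bc}(\xi)\neq\xi$ is symmetric, using the edge $bc$.

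Thus the pair $\{b,d\}$, the doubly rigid case, and the classification of $4$-cycles with $2$-lists are all unnecessary. That classification is also misstated as written: the obstruction is that all four reduced matchings are perfect and their union is a single $8$-cycle, not a $4$-cycle.

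The paper's route is different. It straightens the path $abcd$ rather than the star, and renames so that $(a,1)$ and $(d,2)$ do not conflict on $ad$. It always colors $x$ first, with a color blocking neither $1$ nor $2$ at $a$. If no cycle vertex keeps three colors and no cycle edge has a free color, straightness forces all reduced lists to be $[2]$. The prearranged non-conflict on $ad$ then makes the alternating coloring $1,2,1,2$ proper.

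So the paper defuses the twisted configuration in advance through its choice of color for $x$, in one uniform argument. Your trimmed argument instead splits on whether $\pi_{bc}\pi_{ab}$ has a fixed point $\alpha$. If it does, coloring both $a$ and $c$ with $\alpha$ saves a color at $x$ and at $b$. If it does not, the derangement itself rules out twisting for every color of $x$. The two proofs are about equally short; yours makes explicit why the twisted obstruction cannot arise, while the paper's avoids any case split.
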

    \begin{proof}
        Suppose that $V(K_1) = \{x\}$ and $C_4 = abcd$. Let $(L, \Pi)$ be an exact $3$-correspondence assignment for $G = K_1 \vee C_4$.
	By Observation~\ref{obs__straightening}, we can assume that $L(a)=L(b)=L(c)=L(d)=[3]$ and the path $abcd$ is straight.
	Since the color $1$ at $a$ blocks only one color at $d$, we can (by renaming the colors $2$ and $3$ on $\{a,b,c,d\}$ if needed) assume that $\{(a,1),(d,2)\}\not\in \Pi(ad)$.

	Since $|L(x)|=3$, there exists a color $\alpha\in L(x)$ that does not block colors $1$ and $2$ at $a$. Let $\psi$ be the partial $(L,\Pi)$-coloring of $G$ giving $x$ the color $\alpha$.
	If there exists a vertex $u\in\{a,b,c,d\}$ such that $|L_\psi(u)|=3$, then note that $u$ has $\psi$-excess $1$ and all other vertices of $C_4$ have non-negative $\psi$-excess,
	and thus we can extend $\psi$ to an $(L, \Pi)$-coloring of $G$ using Observation~\ref{obs__greedy}.  Hence, we can assume $|L_\psi(u)|=2$ for each $u\in \{a,b,c,d\}$,
	and in particular $L_{\psi}(a)=[2]$ by the choice of $\alpha$.

	Similarly, if there exists an edge $uv\in E(C_4)$ such that a color $\beta\in L_{\psi}(u)$ does not block any color from $L_{\psi}(v)$, then we can color $u$ by $\beta$
	and further extend the coloring to an $(L, \Pi)$-coloring of $G$ using Observation~\ref{obs__greedy}.

	If that is not the case, then since the edges $ab$, $bc$, and $cd$ are straight and $L_{\psi}(a)=[2]$, we also have $L_{\psi}(b)=L_{\psi}(c)=L_{\psi}(d)=[2]$.
	Since $\{(a,1),(d,2)\}\not\in \Pi(ad)$, we can extend $\psi$ to an $(L, \Pi)$-coloring of $G$ by giving $a$ and $c$ the color $1$ and $b$ and $d$ the color $2$.
	We conclude that the graph $G$ is 3-correspondence colorable.
    \end{proof}  

\begin{lemma} \label{lemma__C4_d1_corr_col}
    For every integer $t \geq 5$, the graph $G=K_t \vee C_4$ is $\minusjedna$-correspondence colorable.
\end{lemma}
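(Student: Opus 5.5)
The plan is to reduce to Lemma~\ref{lemma__C4_d1_corr_col__claim3}. Write $V(K_t)=\{x_1,\ldots,x_t\}$ and $C_4=abcd$, and let $(L,\Pi)$ be an exact $\minusjedna$-correspondence assignment. Then $|L(x_i)|=t+2$ for $i\in[t]$, and $|L(u)|=t+1$ for $u\in\{a,b,c,d\}$.

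We will color $x_1,\ldots,x_{t-1}$ to obtain a partial coloring $\psi$. Afterwards $G-\dom(\psi)$ is $K_1\vee C_4$ with apex $x_t$. Automatically $|L_\psi(x_t)|\ge (t+2)-(t-1)=3$, and each $u\in\{a,b,c,d\}$ has $|L_\psi(u)|\ge (t+1)-(t-1)=2$. So it suffices to choose the colors of $x_1,\ldots,x_{t-1}$ so that every cycle vertex keeps at least $3$ colors. Equivalently, each of $a,b,c,d$ must receive one ``saving'': either some $x_i$ gets a color blocking nothing in the current list of that vertex, or two of the $x_i$ together block at most one color there. Lemma~\ref{lemma__C4_d1_corr_col__claim3} then finishes. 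Since $t\ge 5$, we have at least four vertices $x_1,\ldots,x_4$ available, one per cycle vertex, and the remaining $x_5,\ldots,x_{t-1}$ are colored arbitrarily from their current lists afterwards. This never decreases excess, and every $x_i$ with $i\le t-1$ has a nonempty list whenever it is processed, because its list is large compared with the number of previously colored neighbors.

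The key step is producing the four savings while respecting the conflicts on the edges $x_ix_j$. For $x_1$ and $a$ this is immediate: $|L(x_1)|=|L(a)|+1$, so some color of $x_1$ blocks nothing at $a$. For later savings the list of $x_i$ has shrunk by the colors blocked by $x_1,\ldots,x_{i-1}$. A single ``free'' color guaranteed by counting may be exactly the one killed by a previously chosen color, so plain counting is not enough.

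I expect this to be the main obstacle. I would handle it by pairing the $x_i$, treating $\{x_1,x_2\}$ as responsible for $\{a,c\}$ and $\{x_3,x_4\}$ as responsible for $\{b,d\}$, and choosing each pair jointly, analogously to the choice of $\alpha_1,\alpha_2$ in Lemma~\ref{lemma__P4_2K2_d1_corr_col}. For the pair $x_1,x_2$ and the target $a$, each of $x_1,x_2$ has at least one color free at $a$ (the colors free at $a$ at $x_1$ are disjoint from the lone conflict partner of a fixed color at $x_2$ unless that partner is the unique free one). If some free combination is non-conflicting, we take it. Otherwise every color of $x_1$ and every color of $x_2$ blocks a color at $a$, and we look for $\alpha\in L(x_1)$, $\beta\in L(x_2)$ blocking the same color of $L(a)$ with $\{(x_1,\alpha),(x_2,\beta)\}\notin\Pi(x_1x_2)$. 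There are at least $t+2+t+2-(t+1)=t+3$ such same-target coincidences, and only the $x_1x_2$ matching can spoil them, so some coincidence survives.

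To save at $c$ simultaneously, I would use the freedom that remains: among the many valid choices for $(x_1,x_2)$, some also leave $c$ with a saving. If this is too delicate, I would fall back on a direct approach in the style of Lemma~\ref{lemma__K3_d1_corr_col}. That is, use Observation~\ref{obs__blocking_at_most_one_color} on the nonadjacent pairs $\{a,c\}$ and $\{b,d\}$ to raise the excess of chosen clique vertices, and then close with Observation~\ref{obs__greedy}. In that variant the difficulty is the same bookkeeping: the last two vertices of the greedy order must have excess at least $0$ and at least $1$ respectively, and securing this for two clique vertices at once, rather than one, is the key step.
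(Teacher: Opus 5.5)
Your reduction to Lemma~\ref{lemma__C4_d1_corr_col__claim3} is sound; it is exactly the paper's second case. However, the step that carries all the weight is never established: coloring $x_1,\dots,x_{t-1}$ so that each of $a,b,c,d$ receives a saving. The argument you sketch for it does not work.

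\begin{itemize}
\item The branch ``otherwise every color of $x_1$ and every color of $x_2$ blocks a color at $a$'' is not the negation of ``some free combination is non-conflicting''. It is in fact vacuous, since $|L(x_i)|=|L(a)|+1$ guarantees each $x_i$ a color free at $a$.
\item Consequently the genuine bad case is left untreated: the one where the free colors of $x_1$ and $x_2$ at $a$ conflict on the edge $x_1x_2$.
\item The count of $t+3$ same-target coincidences is impossible anyway, since there are at most $|L(a)|=t+1$ of them.
\item The saving at $c$ ``from the freedom that remains'' and the fallback via Observation~\ref{obs__blocking_at_most_one_color} are only asserted.
\end{itemize}

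The paper settles exactly this point by a case split.
\begin{itemize}
\item Either two clique vertices have non-conflicting colors that are both free at the same cycle vertex $y$. Then $y$ gets excess $1$. Observation~\ref{obs__blocking_at_most_one_color}, applied to the two non-adjacent cycle-neighbors of $y$, gives a third clique vertex non-negative excess. Observation~\ref{obs__greedy} then finishes with that vertex and $y$ last.
\item Or no such pair exists. Then straightening the star centered at $x_1$ forces all $x_i$ to share the same free color $\alpha_j$ at $y_j$, with $\alpha_j$ at $x_{i_1}$ blocking $\alpha_j$ at $x_{i_2}$. Assigning the distinct colors of $\{\alpha_1,\dots,\alpha_4\}$ to distinct clique vertices therefore saves all four cycle vertices at once.
\end{itemize}

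That said, your own plan can be completed by precisely the counting you dismissed, provided you count colors that are free with respect to the \emph{current} list of the cycle vertex rather than its original list.
\begin{itemize}
\item Write $y_1,\dots,y_4$ for $a,b,c,d$ and color $x_1,\dots,x_4$ in turn, with $x_j$ responsible for $y_j$.
\item If $\exc_\psi(y_j)\ge 0$ already, color $x_j$ arbitrarily.
\item Otherwise the coloring $\psi$ of $x_1,\dots,x_{j-1}$ has $\exc_\psi(y_j)=-1$, that is, $|L_\psi(y_j)|=t+2-j$. Since $\Pi(x_jy_j)$ is a matching, at least $(t+2)-(t+2-j)=j$ colors of $L(x_j)$ block nothing in $L_\psi(y_j)$. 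Meanwhile $x_1,\dots,x_{j-1}$ remove at most $j-1$ colors from $L(x_j)$.
\item So some color of $L_\psi(x_j)$ blocks nothing in $L_\psi(y_j)$, and using it raises the excess of $y_j$ to $0$ by Observation~\ref{obs__improve}.
\end{itemize}
Since $t\ge 5$, $x_t$ stays uncolored; after coloring $x_5,\dots,x_{t-1}$ arbitrarily, your reduction goes through. This would be shorter than the paper's proof, with no straightening and no case distinction. As submitted, however, the proposal names the obstacle without overcoming it.
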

\begin{proof}
    Suppose that $V(K_t) = \{x_1, x_2, \ldots, x_t\}$ and $C_4 = y_1y_2y_3y_4$.
    Let $(L, \Pi)$ be an exact $\minusjedna$-correspondence assignment for $G$;
    by Observation~\ref{obs__straightening}, we can assume that this correspondence assignment is straight on the star $S$ with center $x_1$ and rays $x_2$, \ldots, $x_t$.
    We have $|L(x_i)|=t+2$ for $i\in[t]$ and $|L(y_j)|=t+1$ for $j\in[4]$,
    and thus for every $i\in[t]$ and $j\in[4]$, there exists a color $\alpha_{i,j} \in L(x_i)$ that blocks no color at $y_j$.
    Let us distinguish two cases.

\begin{itemize}
\item
    Suppose first that there exist $j\in[4]$ and distinct $i_1,i_2\in [t]$ such that $\alpha_{i_1,j}$ at $x_{i_1}$ does not block $\alpha_{i_2,j}$ at $x_{i_2}$.
    By symmetry, we can assume $i_1=1$, $i_2=2$, and $j=1$.
    Let $\psi_1$ be the partial $(L, \Pi)$-coloring of $G$ giving $x_1$ the color $\alpha_{1,1}$ and $x_2$ the color $\alpha_{2,1}$.
    Thus, $y_1$ has $\psi_1$-excess~$1$.
    Note that $\exc_{\psi_1}(x_3)\ge -1$, $\deg^{-\psi_1} x_3=t+1$, and
    $$|L_{\psi_1}(y_2)| + |L_{\psi_1}(y_4)| \ge (t-1) + (t-1)>t,$$
    since $t\ge 5$.
    By Observation~\ref{obs__blocking_at_most_one_color}, we can extend $\psi_1$ to a partial $(L, \Pi)$-coloring $\psi$ by choosing colors for $y_2$ and $y_4$
    so that $x_3$ has non-negative $\psi$-excess.  Moreover, we have $\exc_{\psi}(y_1)\ge \exc_{\psi_1}(y_1)=1$.
    Observation~\ref{obs__greedy} with the ordering $y_3$, $x_4$, \ldots, $x_t$, $x_3$, $y_1$
    shows that $\psi$ extends to an $(L, \Pi)$-coloring of $G$.

\item
    Hence, we can assume that for every $j\in[4]$ and all distinct $i_1,i_2\in [t]$, the color $\alpha_{i_1,j}$ at $x_i$ blocks the color $\alpha_{i_2,j}$ at $x_j$.
    Since $(L, \Pi)$ is straight on the star $S$, this implies that $\alpha_{i,j}=\alpha_{1,j}$ for every $i\in[t]$ and $j\in[4]$.
    For each $j\in[4]$, let $\alpha_j=\alpha_{1,j}$.  Thus, for each $i\in[t]$, the color $\alpha_j$ appears in $L(x_i)$ and at $x_i$ blocks no color at $y_j$,
    and for distinct $i_1,i_2\in [t]$, the color $\alpha_j$ at $x_{i_1}$ blocks the color $\alpha_j$ at $x_{i_2}$.

    Let $A=\{\alpha_1,\ldots,\alpha_4\}$ and $a=|A|$.  Note that the colors $\alpha_1$, \ldots, $\alpha_4$ do not have to be all distinct, and thus $a<4$ is possible.
    Let $\psi$ be a partial $(L,\Pi)$-coloring of $G$ that assigns to each of the vertices $x_1$, \ldots, $x_a$ a distinct color from $A$,
    and for $i=a+1,\ldots,t-1$ assigns to $x_i$ a color from $L(x_i)$ not blocked by the colors assigned to the preceding vertices
    (this is possible since $|L(x_i)|=t+2>i-1$).

    Let $H=G-\dom(\psi)$, and note that $H=x_t\vee C_4$.
    The usage of all colors from $A$ ensures that the $\psi$-excess of all vertices of $C_4$ is non-negative, and thus $|L_\psi(y_j)|\ge 3$
    for every $j\in[4]$.  Moreover, since $(L,\Pi)$ is a $\minusjedna$-correspondence assignment, the $\psi$-excess of $x_t$ is at least $-1$,
    and thus $|L_\psi(x_t)|\ge 3$.  By Lemma~\ref{lemma__C4_d1_corr_col__claim3}, $H$ has an $(L_\psi,\Pi_\psi)$-coloring,
    which combines with $\psi$ for an $(L, \Pi)$-coloring of $G$.
    \end{itemize}

    Therefore, the graph $G$ is $\minusjedna$-correspondence colorable.
\end{proof}

\noindent We now need a simple observation based on the presence of the induced subgraphs considered in the preceding lemmas.

\begin{observation}\label{obs__omegami}
If a graph $G$ does not contain any induced subgraph isomorphic to $\overline{K_3}, 2K_2, P_4$, or $C_4$, then
$\omega(G)\ge |V(G)|-1$.
\end{observation}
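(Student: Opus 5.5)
Pass to the complement $\overline{G}$. The forbidden induced subgraphs become their complements: $\overline{K_3}$ becomes $K_3$; $2K_2$ becomes $C_4$; $P_4$ is self-complementary; $C_4$ becomes $2K_2$. So $\overline{G}$ has no triangle, no induced $C_4$, no induced $P_4$ and no induced $2K_2$. We want $\alpha(\overline{G})=\omega(G)\ge |V(G)|-1$. Equivalently, $\overline{G}$ has a vertex cover of size at most one, i.e., all edges of $\overline{G}$ share a common vertex (or there are no edges). So the goal is to show that $\overline{G}$ is a star plus isolated vertices.

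Every induced subgraph of $\overline{G}$ is again $P_4$-free, $C_4$-free and triangle-free, so the argument can work directly with the components of $\overline{G}$.

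First I show that at most one component of $\overline{G}$ has an edge. If two different components each contained an edge, those two edges would induce $2K_2$ in $\overline{G}$, since there are no edges between components. So all edges of $\overline{G}$ lie in a single component $D$.

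Next I show that $D$ is a star. Since $D$ is connected and $P_4$-free, it has diameter at most $2$: a shortest path between two vertices at distance $3$ would be an induced $P_4$. Because $D$ is triangle-free, any two adjacent vertices have no common neighbour. Pick a vertex $v$ of $D$ and suppose some neighbour $u$ of $v$ has another neighbour $w\neq v$.
\begin{itemize}
\item $w$ is not adjacent to $v$, since $\{u,v,w\}$ would be a triangle.
\item If $v$ has a further neighbour $x\neq u$, then $x$ is not adjacent to $u$ (no triangle). If $x\sim w$, then $vuwx$ is an induced $C_4$. If $x\nsim w$, then $xvuw$ is an induced $P_4$.
\end{itemize}
Both outcomes are excluded, so $v$ has degree $1$ in $D$. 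This holds for every vertex $v$ that has a neighbour of degree at least $2$. Hence every edge of $D$ has an endpoint of degree $1$ in $D$. A connected graph with this property is a star, or a single edge, which is also a star. In either case the centre of the star covers all edges of $\overline{G}$.

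Removing that centre leaves an independent set of $\overline{G}$, which is a clique of $G$ of size at least $|V(G)|-1$. If $\overline{G}$ has no edges at all, $G$ is complete. This gives the claim.

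There is no serious obstacle here; the only care needed is the small case analysis above, especially remembering the case $x\sim w$, which produces the induced $C_4$.
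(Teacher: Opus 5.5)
Your proof is correct, but it takes a different route from the paper. The paper stays in $G$ and uses the fact that $P_4$-free graphs are cographs, i.e.\ built from single vertices by disjoint unions and complete joins. It then inducts on $|V(G)|$. In the disjoint-union case, $\overline{K_3}$-freeness forces both parts to be cliques, and $2K_2$-freeness forces one of them to be a single vertex. In the join case, $C_4$-freeness forces one side to be a clique, and induction handles the other side.

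You instead pass to $\overline{G}$. There you use $2K_2$-freeness to confine all edges to one component. A purely local case analysis on $v,u,w,x$ (triangle, induced $C_4$, induced $P_4$) then shows that every edge has an endpoint of degree one, so that component is a star.

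Your version is self-contained, since it needs no cograph decomposition theorem. It also states the structure explicitly: all non-edges of $G$ share a vertex. The paper's version is shorter once the cograph theorem is available, and it uses each forbidden graph at exactly one step of the induction.

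Two small remarks on your write-up. First, the diameter observation is never used and can be dropped. Second, the claim that a connected graph in which every edge has a degree-one endpoint is a star deserves a line of justification. If two vertices $a,b$ both had degree at least two, the first edge of an $a$--$b$ path would have both endpoints of degree at least two.
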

\begin{proof}
Since $P_4\not\le G$, it follows that $G$ is a \emph{cograph}, i.e., $G$ is obtained by iterated complete joins and disjoint unions from single-vertex graphs.

We prove the claim by induction on $|V(G)|$.  If $|V(G)|=1$, then $\omega(G)=|V(G)|$.  Otherwise, $G$ is the disjoint union or complete join of smaller
graphs $G_1$ and $G_2$.
\begin{itemize}
\item If $G$ is the disjoint union of $G_1$ and $G_2$, then since $\overline{K_3}\not\le G$, both $G_1$ and $G_2$ must be cliques.
Moreover, since $2K_2\not\le G$, one of $G_1$ and $G_2$ must be a single vertex.  Thus, $\omega(G)=\max(|V(G_1)|,|V(G_2)|)=|V(G)|-1$.
\item If $G$ is the complete join of $G_1$ and $G_2$, then since $C_4\not\le G$, we can by symmetry assume that $G_2$ is a clique.
Note that $G_1$ is an induced subgraph of $G$, and thus $\overline{K_3}, 2K_2, P_4, C_4\not\le G_1$.
By the induction hypothesis, we have $\omega(G_1)\ge |V(G_1)|-1$, and thus
$\omega(G)=\omega(G_1)+|V(G_2)|\ge |V(G)|-1$. \qedhere
\end{itemize}
\end{proof}

\noindent Thus, we can now give the following common generalization of Lemmas~\ref{lemma__K3_d1_corr_col}, \ref{lemma__P4_2K2_d1_corr_col}, and \ref{lemma__C4_d1_corr_col}.

\begin{lemma} \label{lemma__d1_corr_col}
    Let $G$ be a graph with $\omega(G) \leq |V(G)|-2$.
    If $G$ has at least $6$ universal vertices, then it is $\minusjedna$-correspondence colorable.
\end{lemma}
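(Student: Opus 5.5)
Let $U$ be the set of universal vertices of $G$, with $|U|=t\ge 6$, and let $F=G-U$, so that $G=K_t\vee F$. Since a universal vertex lies in every maximal clique, $\omega(G)=t+\omega(F)$ and $|V(G)|=t+|V(F)|$. The hypothesis $\omega(G)\le |V(G)|-2$ therefore gives $\omega(F)\le |V(F)|-2$. By Observation~\ref{obs__omegami}, $F$ contains an induced subgraph $H$ isomorphic to $\overline{K_3}$, $2K_2$, $P_4$, or $C_4$. Then $G'=G[U\cup V(H)]=K_t\vee H$ is an induced subgraph of $G$. Since $t\ge 6$, it is $\minusjedna_{G'}$-correspondence colorable by Lemma~\ref{lemma__K3_d1_corr_col}, Lemma~\ref{lemma__P4_2K2_d1_corr_col}, or Lemma~\ref{lemma__C4_d1_corr_col}.

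It remains to extend from $G'$ to $G$. Let $(L,\Pi)$ be a $\minusjedna_G$-correspondence assignment for $G$, and let $R=V(F)\setminus V(H)$. If $R=\emptyset$, we are done. Otherwise, I first color $R$ and only then color $G'$. To do this, I order the vertices of $R$ arbitrarily and color them greedily one by one.

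When a vertex $r\in R$ is colored, it is adjacent to all $t$ vertices of $U$ and to all of $V(H)$ (if $H\subseteq F$ sits appropriately; in general at least to $U$), and all of these are still uncolored. Its colored neighbours number at most $\deg_G r - t$, while $|L(r)|\ge \deg_G r-1>\deg_G r-t$. Hence some color of $L(r)$ is unblocked, and the greedy step succeeds.

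Let $\psi$ be the resulting partial coloring with $\dom(\psi)=R$. By Observation~\ref{obs__makesminusjedna}, $(L_\psi,\Pi_\psi)$ is a $\minusjedna_{G'}$-correspondence assignment for $G-R=G'$. So $G'$ has an $(L_\psi,\Pi_\psi)$-coloring, and by Observation~\ref{obs__combine} it combines with $\psi$ into an $(L,\Pi)$-coloring of $G$.

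The main obstacle is only bookkeeping: checking that the greedy coloring of $R$ never gets stuck. This relies on each vertex of $R$ having at least two uncolored neighbours, namely in $U$, at the moment it is colored, and this holds because $t\ge 6\ge 2$.
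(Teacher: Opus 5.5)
Your proof is correct and follows the paper's argument. You greedily precolor every vertex outside $U\cup V(H)$; this works because each such vertex still has its $t\ge 2$ neighbours in $U$ uncolored, so $|L(r)|\ge\deg_G r-1$ exceeds its number of colored neighbours. You then apply Lemmas~\ref{lemma__K3_d1_corr_col}, \ref{lemma__P4_2K2_d1_corr_col}, and \ref{lemma__C4_d1_corr_col} to $K_t\vee H$ via Observations~\ref{obs__makesminusjedna} and \ref{obs__combine}.

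The only difference is cosmetic: you apply Observation~\ref{obs__omegami} to $G-U$, whereas the paper applies it to $G$ and then notes that the resulting set is disjoint from $U$. Your parenthetical claim that vertices of $R$ are adjacent to all of $V(H)$ is false in general, but, as you say, only adjacency to $U$ is used.
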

\begin{proof}
    Let $(L, \Pi)$ be a $\minusjedna$-correspondence assignment for $G$.
    Let $U$ be the set of all universal vertices of $G$; note that these vertices induce a clique $K_U$.  Let $t=|U|\ge 6$.
    By Observation~\ref{obs__omegami}, there exists a set $S\subseteq V(G)$ such that the induced subgraph $H=G[S]$ is isomorphic to $\overline{K_3}$, $2K_2$, $P_4$, or $C_4$.
    Since each vertex of $S$ is incident with a non-edge of $H$, we have $S\cap U=\emptyset$.

    Let $G_1=G-(U\cup S)$.  Note that for each vertex $v\in V(G_1)$, we have
    $|L(v)|\ge \deg_G v-1\ge (\deg_{G_1} v+t)-1>\deg_{G_1} v$.
    Therefore, the graph $G_1$ has an $(L, \Pi)$-coloring $\psi$, obtained by coloring its vertices greedily.
    Let $G_2=G-\dom(\psi)=G[U\cup S]=K_U\vee H$.
    Since $(L, \Pi)$ is a $\minusjedna_G$-correspondence assignment, $(L_\psi,\Pi_\psi)$ is a $\minusjedna_{G_2}$-correspondence assignment.
    By Lemmas~\ref{lemma__K3_d1_corr_col}, \ref{lemma__P4_2K2_d1_corr_col}, and \ref{lemma__C4_d1_corr_col},
    it follows that $G_2$ has an $(L_\psi,\Pi_\psi)$-coloring, which combines with $\psi$ for an $(L, \Pi)$-coloring of $G$.

    Therefore, the graph $G$ is $\minusjedna$-correspondence colorable.
\end{proof}

Lemma~\ref{lemma__d1_corr_col} has the following consequence, bounding the density
of non-$\minusjedna$-correspondence colorable graphs with large clique number.

\begin{lemma}\label{lemma__densecom}
Let $a\ge 2$ be an integer and let $G$ be a graph with $\omega(G)=|V(G)|-a$.
If $\delta(G)\ge \tfrac{1}{2}(|V(G)|+a+4)$, then the graph $G$ is $\minusjedna$-correspondence colorable.
\end{lemma}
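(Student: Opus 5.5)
The plan is to find an induced subgraph $H$ of $G$ to which Lemma~\ref{lemma__d1_corr_col} applies. We color $G-V(H)$ greedily first and then finish on $H$ using Observations~\ref{obs__combine} and~\ref{obs__makesminusjedna}. Let $n=|V(G)|$ and $\delta=\delta(G)$, and fix an exact $\minusjedna_G$-correspondence assignment $(L,\Pi)$.

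\emph{Choice of $H$.} Let $K$ be a maximum clique of $G$, so $|V(K)|=n-a$, and let $A=V(G)\setminus V(K)$; then $|A|=a\ge 2$. Pick any two distinct vertices $u,v\in A$ and set $H=G[V(K)\cup\{u,v\}]$. Since $K$ is a maximum clique of $G$, we have $\omega(H)=n-a=|V(H)|-2$.

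\emph{Counting universal vertices.} A vertex $x\in V(K)$ is universal in $H$ as soon as it is adjacent to both $u$ and $v$, since $K$ is a clique. Each vertex of $G$ has at most $n-1-\delta$ non-neighbors. Hence at most $2(n-1-\delta)$ vertices of $K$ fail to be universal in $H$. The hypothesis $\delta\ge\tfrac12(n+a+4)$ gives $2(n-1-\delta)\le n-a-6$. So $H$ has at least $(n-a)-(n-a-6)=6$ universal vertices, and by Lemma~\ref{lemma__d1_corr_col} it is $\minusjedna_H$-correspondence colorable.

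\emph{Coloring the rest.} Every vertex $w\in V(G)\setminus V(H)$ has at most $a-3$ neighbors outside $H$. It therefore has at least $\delta-(a-3)\ge 2$ neighbors in $H$, since $\delta>a$ follows from $\delta\ge\tfrac12(n+a+4)$ and $n\ge a+1$. Color the vertices of $G-V(H)$ greedily in any order. When $w$ is processed, the number of colors blocked at $w$ is at most the number of its already colored neighbors, which is at most $\deg w-2<|L(w)|$. So a color is always available, and we obtain an $(L,\Pi)$-coloring $\psi$ of $G-V(H)$. By Observation~\ref{obs__makesminusjedna}, $(L_\psi,\Pi_\psi)$ is a $\minusjedna_H$-correspondence assignment for $H$, so $H$ has an $(L_\psi,\Pi_\psi)$-coloring. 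By Observation~\ref{obs__combine}, it combines with $\psi$ into an $(L,\Pi)$-coloring of $G$.

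The only step requiring care is the count of universal vertices: the degree bound must make the non-neighborhoods of $u$ and $v$ small enough to leave six common neighbors in $K$. This is exactly what the constant $a+4$ in the hypothesis provides.
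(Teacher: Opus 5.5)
Your proof is correct and takes essentially the same route as the paper: restrict to $H=G[V(K)\cup\{u,v\}]$, color the other $a-2$ vertices greedily, and apply Lemma~\ref{lemma__d1_corr_col} after checking that $u$ and $v$ have at least six common neighbors in $K$. The differences are cosmetic. You count non-neighbors of $u$ and $v$ rather than their neighbors in $K$, which gives the same bound. You justify the greedy step by each remaining vertex keeping two uncolored neighbors in $H$, whereas the paper uses $|L(x)|>|R|$.
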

\begin{proof}
    Let $(L, \Pi)$ be a $\minusjedna$-correspondence assignment for $G$.
    Let $n=|V(G)|$ and let $K$ be a clique of size $\omega(G)=n-a$ in $G$; note that $n=\omega(G)+a\ge a+1$.
    Since $a\ge 2$, there exist distinct vertices $u,v\in V(G)\setminus V(K)$.  Let $R=V(G)\setminus (V(K)\cup \{u,v\})$
    and note that for each vertex $x\in V(G)$, we have
    $$|L(x)|\ge \delta(G)-1\ge \tfrac{1}{2}(n+a+2)\ge \tfrac{1}{2}(2a+3)>a-2=|R|.$$
    Hence, the graph $G[R]$ has an $(L, \Pi)$-coloring $\psi$, obtained by coloring its vertices greedily.

    Let $G'=G-R$ and note that $(L_\psi,\Pi_\psi)$ is a $\minusjedna_{G'}$-correspondence assignment for $G'$.
    Since $G'\le G$ and $K$ is a clique in $G'$, we have $\omega(G')=n-a=|V(G')|-2$.
    Note that the number of neighbors of each of $u$ and $v$ in $K$ is at least
    $$\delta(G)-(|R|+1)\ge \tfrac{1}{2}(n+a+4)-(a-1)=\tfrac{1}{2}(n-a+6)=\tfrac{1}{2}(|V(K)|+6).$$
    Therefore, the vertices $u$ and $v$ have at least $2\cdot \tfrac{1}{2}(|V(K)|+6)-|V(K)|=6$
    common neighbors in $K$.  Since $K$ is a clique, these common neighbors are universal vertices of $G'$.
    Therefore, by Lemma~\ref{lemma__d1_corr_col}, the graph $G'$ has an $(L_\psi,\Pi_\psi)$-coloring, which combines with $\psi$
    for an $(L, \Pi)$-coloring of $G$.

    Therefore, the graph $G$ is $\minusjedna$-correspondence colorable.
\end{proof}

\subsection{Vertex neighborhoods}

Let us now consider graphs with at least one universal vertex.
These will be quite useful in the proof of Theorem~\ref{thm__bk_conjecture_is_true_asym_for_DP_coloring},
as using them we can restrict subgraphs induced by closed neighborhoods of vertices in a minimal $\Delta$-counterexample.
Let us start by considering the situation when this neighborhood contains three disjoint non-edges.

\begin{lemma} \label{lemma__near_clique_three_non-edges}
    Let $B$ be a graph of minimum degree at least $\tfrac{1}{2}(|V(B)|+5)$.
    If $\overline{B}$ contains a matching of size three, then the graph $B \vee v$ is $\minusjedna$-correspondence colorable.
\end{lemma}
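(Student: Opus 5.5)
The plan is to precolor the three non-edges of $\overline{B}$ so that two vertices end up with positive excess: the apex $v$ and one auxiliary vertex $w$ of $B$. The rest of $B \vee v$ is then colored greedily via Observation~\ref{obs__greedy}.

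\emph{Setup and excess bookkeeping.} Let $n=|V(B)|$, let $G=B\vee v$, and let $(L,\Pi)$ be a $\minusjedna_G$-correspondence assignment. Let $a_1b_1,a_2b_2,a_3b_3$ be a matching in $\overline{B}$. Every vertex $x\in V(B)$ has $\deg_G x=\deg_B x+1$, so $|L(x)|\ge \deg_B x\ge \tfrac12(n+5)$. The apex has $\deg_G v=n$ and $|L(v)|\ge n-1$. Initially every vertex has excess at least $-1$. The final greedy step needs:
\begin{itemize}
\item $v$, colored last, to have excess at least $1$;
\item some vertex $w\in V(B)$, colored just before $v$, to have excess at least $0$.
\end{itemize}

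\emph{Choice of $w$.} Each of $a_3,b_3$ has at least $\tfrac12(n+5)$ neighbors among the other $n-2$ vertices of $B$. Hence they have at least $n+5-(n-2)=7$ common neighbors in $B$. At most four of these lie in $\{a_1,b_1,a_2,b_2\}$, so we can pick a common neighbor $w$ of $a_3$ and $b_3$ outside $\{a_1,b_1,a_2,b_2,a_3,b_3\}$.

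\emph{Three applications of Observation~\ref{obs__blocking_at_most_one_color}.}
\begin{enumerate}
\item Color $a_3,b_3$ to raise the excess of $w$ from $-1$ to $0$. This needs $|L(a_3)|+|L(b_3)|>\deg_G w-1$. The left side is at least $n+5$ and $\deg_G w\le n$, so this holds.
\item Color $a_1,b_1$ with $k=-1$ for $v$. Now $\deg^{-\psi}v=n-2$, and each of $a_1,b_1$ has lost at most $2$ colors. So the list sizes sum to at least $n+1>n-3$, and the excess of $v$ becomes at least $0$.
\item Color $a_2,b_2$ with $k=0$ for $v$. Now $\deg^{-\psi}v=n-4$, and each of $a_2,b_2$ has lost at most $4$ colors. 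The sum is at least $n-3>n-4$, so the excess of $v$ becomes at least $1$.
\end{enumerate}
By the monotonicity of excess under extension, $w$ keeps excess at least $0$ and $v$ keeps excess at least $1$. The lists used above are nonempty since $\tfrac12(n+5)-4>0$.

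\emph{Greedy completion.} Since $\delta(B)>n/2$, the graph $B$ is connected, and so is $B$ minus the six colored vertices: any two remaining vertices have a common neighbor in $B$, since their degrees sum to more than $n$, and there are enough common neighbors to avoid the six removed vertices. Order the remaining vertices of $B$ by non-increasing distance to $w$ in this subgraph, ending with $w$, and then put $v$ last. Each vertex other than $w$ and $v$ has excess at least $-1$ and two later neighbors: $v$ and a neighbor closer to $w$. Observation~\ref{obs__greedy} then finishes the coloring.

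The only delicate point is the arithmetic on list sizes in the second and third applications, where colors lost to earlier precolored neighbors must be accounted for. The bound $\delta(B)\ge\tfrac12(n+5)$ gives just enough room there.
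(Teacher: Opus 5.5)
Your proposal is correct and follows the paper's proof essentially step for step. The paper also picks $w$ as a common neighbor of one non-adjacent pair avoiding the other four vertices, colors that pair first via Observation~\ref{obs__blocking_at_most_one_color} to make the excess of $w$ non-negative, and uses the other two pairs to raise the excess of $v$ from $-1$ to $0$ and then to $1$ with the same list-size arithmetic. It then finishes greedily on the connected graph $B'$, ordered by distance to $w$ with $v$ last. One small imprecision is in your connectivity remark: the bound of at least $7$ common neighbors holds for non-adjacent pairs, but that is all you need, since adjacent remaining vertices are trivially connected.
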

\begin{proof}
    Suppose that $x$, $x'$, $y$, $y'$, $z$, and $z'$ are distinct vertices in $B$ such that $x \nsim x'$, $y \nsim y'$, and $z \nsim z'$.
    Let $n=|V(B)|$.
    Note that any distinct non-adjacent vertices $a$ and $b$ of $B$ have at least
    $$\deg_B a+\deg_B b-(n-2)\ge 2\delta(B)-n+2\ge 7$$
    common neighbors in $B$, and thus the graph $B'=B-\{x,x',y,y',z,z'\}$ is connected.
    Moreover, by considering $a=x$ and $b=x'$, we conclude that the vertices $x$ and $x'$ have a common neighbor $w$ in $B$ different from $y$, $y'$, $z$, and $z'$.

    Let $(L, \Pi)$ be an exact $\minusjedna$-correspondence assignment for the graph $G=B \vee v$.
    Note that $\deg_G w\le n$ and
    $$|L(x)| + |L(x')| \geq 2\delta(B) > n-1.$$
    Hence, by Observation~\ref{obs__blocking_at_most_one_color} (with the empty precoloring) we can choose colors for $x$ and $x'$, obtaining
    a partial $(L, \Pi)$-coloring $\psi_1$ such that $w$ has a non-negative $\psi_1$-excess.

    Note that $\exc_{\psi_1}(v)\ge -1$, $\deg^{-\psi_1} v=n-2$, and
    $$|L_{\psi_1}(y)| + |L_{\psi_1}(y')| \geq 2(\delta(B)-2) > n - 3.$$
    By Observation~\ref{obs__blocking_at_most_one_color}, we can extend $\psi_1$ to a partial $(L, \Pi)$-coloring $\psi_2$ by choosing colors for $y$ and $y'$
    so that that $v$ has a non-negative $\psi_2$-excess.

    We now repeat the same argument for $z$, $z'$, and $v$:
    Since $\exc_{\psi_2}(v)\ge 0$, $\deg^{-\psi_2} v=n-4$, and
    $$|L_{\psi_2}(z)| + |L_{\psi_2}(z')| \geq 2(\delta(B)-4) > n - 4.$$
    Observation~\ref{obs__blocking_at_most_one_color} implies that we can extend $\psi_2$ to a partial $(L, \Pi)$-coloring $\psi$ by choosing colors for $z$ and $z'$ 
    so that $v$ has a positive $\psi$-excess.

    Moreover, we have $\exc_\psi(w)\ge\exc_{\psi_1}(w)\ge 0$.
    Therefore, we can extend $\psi$ to $B \vee v$ greedily using Observation~\ref{obs__greedy};
    we order the vertices of the connected subgraph $B'$ in non-increasing order according to the distance from $w$ and put the vertex $v$
    at the end to ensure that the assumptions are satisfied.
\end{proof}

By combining Lemmas~\ref{lemma__densecom} and \ref{lemma__near_clique_three_non-edges}, we obtain the following important consequence.

\begin{lemma} \label{lemma__near_clique}
    Let $B$ be a graph with $\delta(B) \geq \tfrac{1}{2}(|V(B)|+7)$.
    If $\omega(B) \leq |V(B)| - 2$, then the graph $G=B \vee v$ is $\minusjedna$-correspondence colorable.
\end{lemma}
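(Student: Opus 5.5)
Let $n=|V(B)|$ and $G=B\vee v$, so $|V(G)|=n+1$ and $\omega(G)=\omega(B)+1$. We split into two cases according to whether $\overline{B}$ contains a matching of size three.

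\emph{Case 1: $\overline{B}$ has a matching of size three.} Here we apply Lemma~\ref{lemma__near_clique_three_non-edges} directly. Its hypothesis is $\delta(B)\ge\tfrac12(n+5)$, which is weaker than our assumption $\delta(B)\ge\tfrac12(n+7)$.

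\emph{Case 2: $\overline{B}$ has no matching of size three.} Then $\overline{B}$ has a vertex cover $X$ of size at most $4$, since the endpoints of a maximum matching of size at most two cover all edges. The set $V(B)\setminus X$ is a clique in $B$, so $\omega(B)\ge n-4$. Writing $a'=n-\omega(B)$, the hypothesis $\omega(B)\le n-2$ gives $2\le a'\le 4$. We then apply Lemma~\ref{lemma__densecom} to $G$ with $a=|V(G)|-\omega(G)=(n+1)-(\omega(B)+1)=a'\in\{2,3,4\}$. That lemma requires
$$\delta(G)\ge\tfrac12\bigl(|V(G)|+a+4\bigr)=\tfrac12(n+1+a+4).$$
For vertices of $B$ we have $\deg_G=\deg_B+1\ge \tfrac12(n+7)+1=\tfrac12(n+9)$, and $\deg_G v=n$, which is at least as large once $n\ge 9$. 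So the requirement holds provided $a\le 4$, which is exactly what Case~2 gives, and the lemma finishes this case.

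The one degenerate point is $n$ small. The condition $\delta(B)\ge\tfrac12(n+7)$ together with $\delta(B)\le n-1$ forces $n\ge 9$, so $\deg_G v=n$ meets the bound above. The step needing most care is the bookkeeping in Case~2, namely checking that $a\le 4$ is exactly what the $+7$ in the hypothesis accommodates, that is, $\tfrac12(n+9)\ge\tfrac12(n+5+a)$. I expect everything else to be direct.
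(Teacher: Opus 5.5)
Your proof is correct and follows the paper's argument essentially verbatim: you split on whether $\overline{B}$ has a matching of size three and use Lemma~\ref{lemma__near_clique_three_non-edges} in that case; otherwise the endpoints of a maximum matching give $\omega(B)\ge |V(B)|-4$, and Lemma~\ref{lemma__densecom} applies with $2\le a\le 4$. Your separate check of $\deg_G v$ is just an explicit version of the paper's observation that $\delta(G)=\delta(B)+1$.
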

\begin{proof}
    Let $\mu$ be the maximum size of a matching in $\overline{B}$.
    If $\mu\ge 3$, then the conclusion follows by Lemma~\ref{lemma__near_clique_three_non-edges},
    and thus assume that $\mu\le 2$.  Note that the set of vertices not incident with the maximum matching
    forms an independent set in $\overline{B}$, and thus $\omega(B)=\alpha(\overline{B})\ge |V(B)|-2\mu\ge |V(B)|-4$.
    Since $G$ is obtained from $B$ by adding a universal vertex, it follows that $\omega(G)=|V(G)|-a$ for an integer $a$
    such that $2\le a\le 4$.  Moreover, $\delta(G)=\delta(B)+1\ge \tfrac{1}{2}(|V(B)|+9)\ge \tfrac{1}{2}(|V(G)|+8)\ge \tfrac{1}{2}(|V(G)|+a+4)$.
    Therefore, the graph $G$ is $\minusjedna$-correspondence colorable by Lemma~\ref{lemma__densecom}.
\end{proof}

This leads us to the first key result of this section, showing that in a minimal $\Delta$-counterexample, the neighborhood of each vertex contains either a large
clique or many non-edges.
\begin{theorem} \label{thm__many_nones}
    Let $H$ be a graph with $n$ vertices, where $H$ is not a clique, and let $v$ be a vertex not belonging to $H$.
    Suppose that for every induced subgraph $B$ of $H$, the graph $B\vee v$ is not $\minusjedna$-correspondence colorable.
    Then $H$ has at least $$\frac{(n-\omega(H)-1)(n+\omega(H)-14)}{4}$$ non-edges.
\end{theorem}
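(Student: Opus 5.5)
Write $\omega=\omega(H)$. Lemma~\ref{lemma__near_clique} says $B\vee v$ is $\minusjedna$-correspondence colorable whenever $\delta(B)\ge\tfrac12(|V(B)|+7)$ and $\omega(B)\le|V(B)|-2$. So for every induced subgraph $B$ of $H$, one of two things fails: either $\omega(B)\ge |V(B)|-1$, or $B$ has a vertex of degree less than $\tfrac12(|V(B)|+7)$, i.e.\ a vertex with more than $\tfrac12(|V(B)|-9)$ non-neighbours in $B$. I will use this to build a peeling sequence.

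Set $B_0=H$. While $\omega(B_i)\le |V(B_i)|-2$, choose a vertex $u_i$ of $B_i$ of minimum degree and let $B_{i+1}=B_i-u_i$. Since $\omega(B_i)\le|V(B_i)|-2$, Lemma~\ref{lemma__near_clique} applied to $B_i$ (with the hypothesis of the theorem) gives $\deg_{B_i}u_i<\tfrac12(|V(B_i)|+7)$. Hence $u_i$ has at least $\tfrac12(|V(B_i)|-8)$ non-neighbours in $B_i$, using $|V(B_i)|-1-\deg_{B_i}u_i>|V(B_i)|-1-\tfrac12(|V(B_i)|+7)$ and integrality to lose at most $\tfrac12$. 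Each non-edge $u_iw$ with $w\in V(B_i)$ is counted only at step $i$, because $u_i$ is deleted right afterwards. So the non-edges obtained this way are distinct.

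The process stops at some $B_k$ with $\omega(B_k)\ge|V(B_k)|-1$. Since $\omega(B_k)\le\omega$, we get $|V(B_k)|\le\omega+1$ and hence $k\ge n-\omega-1$. The sizes $|V(B_i)|$ for $i<k$ are $n,n-1,\dots,n-k+1$, so the number of non-edges is at least
$$\sum_{i=0}^{k-1}\frac{n-i-8}{2}\ \ge\ \sum_{j=0}^{m-1}\frac{n-j-8}{2},\qquad m=n-\omega-1.$$
This truncation is valid provided the discarded terms ($i\ge m$) are non-negative; if some are negative, I simply restrict to the first $m$ steps, which is what the bound needs anyway. The sum equals $\tfrac{m}{4}\bigl(2n-m-15\bigr)=\tfrac{(n-\omega-1)(n+\omega-14)}{4}$, since $2n-m-15=n+\omega-14$. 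This matches the claimed bound exactly.

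Two details remain. First, if some term $n-i-8$ is negative for $i<m$, the bound still holds because each step contributes at least $\max(0,\cdot)$ non-edges, and dropping negative terms only increases a lower bound. If the whole expression is negative, the claim is trivial. Second, $H$ is not a clique, so $m\ge0$, and $m=0$ gives $0$, which is also trivial. The main point to check carefully is the integrality constant: from $\deg<\tfrac12(s+7)$ we get $\deg\le\tfrac12(s+6)$, so the number of non-neighbours is at least $s-1-\tfrac12(s+6)=\tfrac12(s-8)$. This gives exactly the constant $-14$ in the final formula.
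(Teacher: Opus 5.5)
Your proof is correct and is essentially the paper's argument: repeatedly delete a minimum-degree vertex, apply Lemma~\ref{lemma__near_clique} to each of the first $n-\omega(H)-1$ remaining induced subgraphs, use integrality to get $\delta\le\tfrac12(s+6)$, and sum the resulting $\tfrac12(s-8)$ non-edges per step. The only cosmetic difference is that you stop adaptively once $\omega(B_k)\ge|V(B_k)|-1$, whereas the paper obtains the lemma's hypothesis directly from $|V(H_i)|\ge\omega(H)+2\ge\omega(H_i)+2$ for $i\le n-\omega(H)-1$.
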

\begin{proof}
Since $H$ is not a clique, we have $\omega(H)\le n-1$.
Let $H_1=H$, and for $i=1,\ldots, n$, let $v_i$ be a vertex of minimum degree in $H_i$ and let $H_{i+1}=H_i-v_i$.
Consider any positive integer $i\le n-\omega(H)-1$.  The graph $H_i$ has $n-i+1\ge \omega(H)+2\ge\omega(H_i)+2$ vertices.
Since the graph $H_i\vee v$ is not $\minusjedna$-correspondence colorable, Lemma~\ref{lemma__near_clique} implies that
$\deg_{H_i} v_i=\delta(H_i)\le \tfrac{1}{2}(|V(H_i)|+6)$.  Consequently, the number of non-edges of $H_i$ incident with $v_i$ is
$$|V(H_i)|-\deg_{H_i} v_i-1\ge |V(H_i)|-\frac{|V(H_i)|+6}{2}-1=\frac{|V(H_i)|-8}{2}=\frac{n-i-7}{2}.$$
Therefore, the total number of non-edges of $H$ is at least
\[\sum_{i=1}^{n-\omega(H)-1} \frac{n-i-7}{2}=\frac{(n-\omega(H)-1)(n+\omega(H)-14)}{4}.\qedhere
\]
\end{proof}

\subsection{Near-clique neighborhoods}

Next, we study neighborhoods of large cliques (or more precisely, \emph{near-cliques}, see below) in minimal $\Delta$-counterexamples.
Let us start by excluding another configuration.

\begin{lemma}\label{lemma__33vamp}
    Let $H$ be a graph containing distinct vertices $u_1$ and $u_2$ such that $H-\{u_1,u_2\}$ is
    a clique.  Let $C$ denote this clique and for $i\in [2]$, let $U_i$ be the set of vertices of $C$ adjacent to $u_i$ and non-adjacent to $u_{3-i}$.
    Suppose that $|U_1|\ge 3$, $|U_2|\ge 3$, and $u_1$ has at least five neighbors in $C$.
    If $u_1$ and $u_2$ have at least one common neighbor $v\in V(C)$, then the graph $H$ is $\minusjedna$-correspondence colorable.
\end{lemma}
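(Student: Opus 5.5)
The plan is a direct greedy argument via Observation~\ref{obs__greedy}, after precoloring $u_1$, $u_2$ and a few vertices of $C$ in pairs of non-adjacent vertices using Observation~\ref{obs__blocking_at_most_one_color}. Let $k=|V(C)|$ and let $(L,\Pi)$ be an exact $\minusjedna$-correspondence assignment for $H$. The list sizes are as follows.
\begin{itemize}
\item The common neighbour $v$ has degree $k+1$, so $|L(v)|=k$.
\item Vertices of $U_1\cup U_2$ have degree $k$, so their lists have size $k-1$.
\item Vertices of $C$ adjacent to neither $u_i$ have lists of size $k-2$.
\item $|L(u_1)|\ge 4$, and $|L(u_2)|\ge |U_2|+1-1\ge 3$ (each plus one if $u_1\sim u_2$).
\end{itemize}
Every vertex starts with excess at least $-1$.

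\textbf{The two pairs and the target $v$.} Fix $x_1,y_1\in U_1$ and $x_2,y_2\in U_2$, all distinct. The pairs $\{u_2,x_1\}$ and $\{u_1,x_2\}$ are non-adjacent, and both are contained in $N(v)$.

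First apply Observation~\ref{obs__blocking_at_most_one_color} to $u_2,x_1$ with target $v$. We have $|L(u_2)|+|L(x_1)|\ge 3+(k-1)>k=\deg v+(-1)$, so $v$ reaches excess $\ge 0$.

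Next apply it to $u_1,x_2$ with target $v$. Now $\deg^{-}v=k-1$. The list of $u_1$ loses at most the colours blocked by $x_1$ and by $u_2$; I check that it keeps at least $3$ colours in both cases $u_1\sim u_2$ and $u_1\nsim u_2$, and this is exactly where ``$u_1$ has at least five neighbours in $C$'' is used. The list of $x_2$ loses at most $2$ colours, from $x_1$ and $u_2$. Hence the sum is at least $3+(k-3)=k>k-1$, and $v$ reaches excess $\ge 1$.

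\textbf{The second-to-last vertex.} The remaining uncoloured vertices form the clique $C-\{x_1,x_2\}$, all adjacent to $v$, so I order them with $v$ last. For Observation~\ref{obs__greedy} I then still need one vertex $w$, placed just before $v$, with excess $\ge 0$. Every other vertex has at least two later neighbours and excess $\ge -1$.

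The natural candidate is $w=y_2\in U_2$: it is adjacent to $u_2$ and $x_1$ but not to $u_1$, and it needs one saving. I will obtain it by strengthening the first step. When choosing the colours of $u_2$ and $x_1$, I require simultaneously that together they block at most one colour at $v$ and at most one colour at $y_2$. There is slack for this: the sum $|L(u_2)|+|L(x_1)|$ exceeds the threshold by about $2$, and the constraint from $y_2$ is a similar matching condition. The case analysis mirrors the proof of Observation~\ref{obs__blocking_at_most_one_color}: either some colour of $u_2$ or $x_1$ blocks nothing relevant, or a pigeonhole argument over the colours of $x_1$ gives a coincidence at both targets.

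If this double requirement fails in some subcase, the fallback is symmetric: use the third vertex $y_1\in U_1$ (or $y_2$) in an extra pair $\{u_2,y_1\}$ or $\{u_1,y_2\}$. The choice of colour for $u_2$ would then be made jointly with those of $x_1$ and $y_1$. This uses the hypotheses $|U_1|,|U_2|\ge 3$.

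Making this last saving rigorously is the step I expect to be the main obstacle, since the first two savings are routine counting. Once $w$ has excess $\ge 0$ and $v$ has excess $\ge 1$, Observation~\ref{obs__greedy} with the ordering ending $w,v$ finishes the proof.
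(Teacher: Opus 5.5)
Your first two steps are correct. With the list sizes you computed, applying Observation~\ref{obs__blocking_at_most_one_color} to $\{u_2,x_1\}$ and then to $\{u_1,x_2\}$ does raise the excess of $v$ to at least $1$.

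\textbf{The gap.} What is missing is the saving for a second-to-last vertex, and the mechanism you propose for it fails. In general you cannot colour $u_2$ and $x_1$ so that they together block at most one colour at $v$ \emph{and} at most one colour at $y_2$. For a concrete obstruction:
\begin{itemize}
\item take $N(u_2)=U_2\cup\{v\}$ with $|U_2|=3$, so $|L(u_2)|=3$;
\item take $L(x_1)=L(y_2)=[k-1]$ and $L(v)=[k]$, with identity matchings on $x_1v$ and $x_1y_2$;
\item let the colours $a,b,c$ of $u_2$ block $1,2,3$ at $v$ but $2,3,1$ at $y_2$.
\end{itemize}
Every colour involved blocks something. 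So for each $\alpha\in L(u_2)$, the unique colour of $x_1$ that works at $v$ differs from the unique one that works at $y_2$. The pigeonhole coincidence cannot be obtained at both targets at once.

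The fallback through $y_1$ is not spelled out, and it is exactly the part that needs proof. The structural obstacle is as follows. Observation~\ref{obs__blocking_at_most_one_color} needs a non-adjacent pair, and every non-adjacent pair of $H$ contains $u_1$ or $u_2$. Once both are coloured, Observation~\ref{obs__greedy} on the clique $C-\{x_1,x_2\}$ still needs three units of saving: two at the last vertex and one at the vertex before it. Your pairs supply only two.

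\textbf{The missing idea.} You need a different source of saving: the degree gap between $u_1$ and $U_1$. This means $u_1$ should be kept uncoloured and coloured last.
\begin{itemize}
\item Since $u_1$ misses all of $U_2$, we have $\deg u_1\le k-2$, while every $x\in U_1$ has $\deg x=k$. Hence $|L(x)|\ge |L(u_1)|+2$, so each $x\in U_1$ has at least two colours that block nothing at $u_1$.
\item Colour $x_1,x_2\in U_1$ with such colours. This can be done compatibly, because the colour of $x_1$ blocks at most one of the two candidates at $x_2$. It gives $u_1$ excess $1$ for free.
\item Apply Observation~\ref{obs__blocking_at_most_one_color} once, to the non-adjacent pair $x_3\in U_1$, $u_2$ with target $v$. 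Their lists keep at least $k-3$ and $3$ colours. Since $v$ has $k-1$ uncoloured neighbours and excess at least $-1$, and $(k-3)+3>(k-1)-1$, this gives $v$ excess at least $0$.
\item Finish with Observation~\ref{obs__greedy}, colouring $y$, $v$, $u_1$ last, where $y\in (N(u_1)\cap C)\setminus\{v,x_1,x_2,x_3\}$.
\end{itemize}
The last step is where the hypothesis that $u_1$ has five neighbours in $C$ is really needed: it guarantees that $y$ exists and has the two later neighbours $v$ and $u_1$.
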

\begin{proof}
    Let $n=|V(H)|$.
    Note that for each $x\in U_1$, we have $\deg x=n-2\ge |V(H)\setminus (U_2\cup \{u_1\})|+2\ge \deg u_1+2$.
    Let $x_1$, $x_2$, and $x_3$ be distinct vertices in $U_1$.  Let $y\in V(C)\setminus\{v,u_1,u_2,u_3\}$ be a neighbor of $u_1$,
    which exists since $u_1$ has at least five neighbors in $C$.

    Let $(L, \Pi)$ be an exact $\minusjedna$-correspondence assignment for the graph $H$.
    Since $|L(x_1)|\ge |L(u_1)|+2$ and $|L(x_2)|\ge |L(u_1)|+2$, we can choose colors for $x_1$ and $x_2$ and obtain
    a partial $(L,\Pi)$-coloring $\psi_1$ of $H$ such that $\psi_1(x_1)$ and $\psi_1(x_2)$ do not block any colors at $u_1$.
    Consequently, $u_1$ has $\psi_1$-excess $1$.

    Note that $\exc_{\psi_1}(v)\ge -1$, $\deg^{-\psi_1} v=n-3$,
    $$|L_{\psi_1}(u_2)|=|L(u_2)|=\deg u_2-1\ge |U_2\cup\{v\}|-1\ge 3,$$
    and
    $$|L_{\psi_1}(x_3)| + |L_{\psi_1}(u_2)| \ge (n-5)+3 > n - 4.$$
    By Observation~\ref{obs__blocking_at_most_one_color}, we can extend $\psi_1$ to a partial $(L, \Pi)$-coloring $\psi$ by choosing colors for $x_3$ and $u_2$
    so that that $v$ has a non-negative $\psi$-excess.

    We can now extend $\psi$ to $H$ greedily using Observation~\ref{obs__greedy}, with the vertices $y$, $v$, and $u_1$ colored last.
\end{proof}

A set $D$ of vertices of a graph $G$ is called a \emph{near-clique} if $\omega(G[D]) \geq |D| - 1$.
A subset $C\subseteq D$ of size $\omega(G[D])$ inducing a clique in $G$ is a \emph{core} of the near-clique (the core is unique unless $G[D]$ has exactly one non-edge).
For a positive integer $\Delta$, the set $D$ is \emph{$\Delta$-isolated} in $G$ if every vertex $v\in V(G)\setminus D$ has at most $\lceil \tfrac{3}{4} \Delta \rceil$ neighbors in $D$.
In Section~\ref{section_04}, we show that each minimal $\Delta$-counterexample admits a sparse-dense decomposition, where the dense parts of this decomposition
are formed by $\Delta$-isolated near-cliques.  Thus, we need to understand how such near-cliques interact with the rest of the graph.
We often use the following simple observation.
\begin{observation}\label{obs__nns}
Let $\Delta$ be a positive integer, let $D$ be a $\Delta$-isolated near-clique in a graph $G$, let $C$ be a core of $D$, and let $p=\Delta-|C|$.
Then the vertex in $D\setminus C$ (if any) has at least one non-neighbor in $C$, and every vertex in $V(G)\setminus D$ has
at least $\tfrac{\Delta-3}{4}-p$ non-neighbors in $C$.
\end{observation}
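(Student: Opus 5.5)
The plan is to verify the two claims directly from the definitions of a near-clique, a core, and $\Delta$-isolation; no earlier lemmas are needed.

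For the first claim, let $u$ be the vertex of $D\setminus C$, if it exists. Recall that $C$ is a core, so $C$ is a clique of size $\omega(G[D])$. Suppose for contradiction that $u$ were adjacent to every vertex of $C$. Then $C\cup\{u\}$ would be a clique in $G[D]$ of size $\omega(G[D])+1$, which is impossible. Hence $u$ has at least one non-neighbor in $C$.

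For the second claim, let $w\in V(G)\setminus D$. Since $D$ is $\Delta$-isolated, $w$ has at most $\lceil \tfrac34\Delta\rceil$ neighbors in $D$, and therefore at most that many neighbors in $C\subseteq D$. The only estimate needed is the integrality bound $\lceil \tfrac34\Delta\rceil\le \tfrac{3\Delta+3}{4}$, which holds because $3\Delta$ is an integer, so rounding $\tfrac{3\Delta}{4}$ up adds at most $\tfrac34$. Using $|C|=\Delta-p$, the number of non-neighbors of $w$ in $C$ is then at least
$$|C|-\Big\lceil \tfrac34\Delta\Big\rceil\ \ge\ \Delta-p-\frac{3\Delta+3}{4}\ =\ \frac{\Delta-3}{4}-p.$$

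Neither step poses a real obstacle. The only point needing any care is this ceiling estimate, which produces the $-3$ in the numerator.
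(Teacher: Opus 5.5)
Your proof is correct. It is exactly the reasoning the paper leaves implicit, since the paper states this as an observation without proof: maximality of the core $C$ rules out the extra vertex of $D$ seeing all of $C$, and $\Delta$-isolation together with $\lceil \tfrac34\Delta\rceil\le\tfrac{3\Delta+3}{4}$ gives at least $|C|-\lceil\tfrac34\Delta\rceil\ge\tfrac{\Delta-3}{4}-p$ non-neighbors.
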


For a positive integer $m$, a vertex $u\in V(G)\setminus C$ is \emph{$(C,m)$-loose} if $u$ has at most $m$ neighbors in $C$, and \emph{$(C,m)$-tight} otherwise.

\begin{lemma} \label{lemma__no_two_vertices_can_see_together_many_vertices_in_max_clique}
    Let $\Delta$ be a positive integer and let $(G,L,\Pi)$ be a minimal $\Delta$-counterexample.
    Let $D$ be a $\Delta$-isolated near-clique in $G$, let $C$ be a core of $D$, and let $p=\Delta-|C|$.
    If $\Delta\ge 4p+8$, then every vertex $v\in C$ has at most one $(C,7)$-tight neighbor.
\end{lemma}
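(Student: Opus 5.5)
The plan is to argue by contradiction and reduce to Lemma~\ref{lemma__33vamp}. Suppose some $v\in C$ has two distinct $(C,7)$-tight neighbors $u_1,u_2$. Then $u_1,u_2\notin C$, and each has at least $8$ neighbors in $C$. We consider the induced subgraph $H=G[C\cup\{u_1,u_2\}]$ and show that it satisfies the hypotheses of Lemma~\ref{lemma__33vamp}. That lemma then makes $H$ $\minusjedna_H$-correspondence colorable, which contradicts Observation~\ref{obs__forbidden_d1_corr_col_ind_subgraph}.

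First, $H-\{u_1,u_2\}=C$ is a clique, and $v$ is a common neighbor of $u_1$ and $u_2$ in $C$. Also, $u_1$ has at least $8\ge 5$ neighbors in $C$. It remains to verify $|U_1|\ge 3$ and $|U_2|\ge 3$, where $U_i$ is the set of neighbors of $u_i$ in $C$ that are non-adjacent to $u_{3-i}$. Since $u_i$ has at least $8$ neighbors in $C$, it suffices to show that $u_{3-i}$ has many non-neighbors in $C$.

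This is where the isolation hypothesis and $\Delta\ge 4p+8$ enter. At most one of $u_1,u_2$ is the vertex of $D\setminus C$.
\begin{itemize}
\item If $u_{3-i}\in V(G)\setminus D$, Observation~\ref{obs__nns} gives at least $\tfrac{\Delta-3}{4}-p\ge \tfrac{5}{4}$ non-neighbors in $C$, hence at least $2$. That alone is not enough for three vertices of $U_i$, so I would instead count directly: $u_i$ has at least $8$ neighbors in $C$, and the common neighbors of $u_1,u_2$ in $C$ must be few.
\item A cleaner route is a degree count. Vertices of $C$ have degree at most $\Delta$ and $|C|=\Delta-p$, so each $w\in C$ has at most $p+1$ neighbors outside $C$. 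That alone does not bound common neighbors. So I will use the non-neighbor bound: $u_{3-i}$ has at least $\lceil\tfrac{\Delta-3}{4}-p\rceil\ge 2$ non-neighbors in $C$.
\end{itemize}
Now the key point: if $u_{3-i}$ has $k$ non-neighbors in $C$ and $u_i$ misses at most $|C|-8$ vertices of $C$, then $|U_i|\ge k-(|C|-8)$. That estimate is useless when $|C|$ is large, so the non-neighbors of $u_{3-i}$ need to be shown to be adjacent to $u_i$, or a different count is needed.

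Because of this, I expect the main obstacle to be establishing $|U_i|\ge 3$ in general. If it fails, some $U_i$ has at most $2$ elements, meaning almost all of $u_i$'s $\ge 8$ neighbors in $C$ are common neighbors with $u_{3-i}$. In that case I would apply a different reduction. Take $S$ to be $8$ (or $6$) common neighbors of $u_1,u_2$ in $C$, and consider $G'=G[C\cup\{u_1,u_2\}]$ or a suitable subset containing $S\cup\{u_1,u_2\}$ plus non-neighbors. If $u_1\nsim u_2$, the common neighbors of $u_1$ and $u_2$ in $C$ are universal in $G[N\cup\{u_1,u_2\}]$, where $N$ is the set of common neighbors together with a non-neighbor of $u_1$. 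So Lemma~\ref{lemma__d1_corr_col} applies once $\omega\le |V|-2$, which is guaranteed by the non-neighbors provided by Observation~\ref{obs__nns}. More precisely, take $X=$ (common neighbors of $u_1,u_2$ in $C$) $\cup\, U_1\cup U_2\cup\{u_1,u_2\}$. The common neighbors are universal in $G[X]$ if $u_1\sim u_2$, and there are at least $6$ of them when $|U_i|\le 2$. Then $\omega(G[X])\le |X|-2$ provided neither $u_1$ nor $u_2$ is complete to the rest of $X$, and the clique-number bound $\omega(G)\le\Delta-1$ handles degenerate cases. If $u_1\nsim u_2$, one needs $|X|-2\ge\omega$, which holds since $X\setminus\{u_1,u_2\}$ is a clique with both $u_i$ outside it.

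Thus the proof splits into two cases. If both $|U_i|\ge 3$, we use Lemma~\ref{lemma__33vamp}. Otherwise at least $6$ common neighbors exist and we use Lemma~\ref{lemma__d1_corr_col}. The inequality $\Delta\ge 4p+8$ guarantees, via Observation~\ref{obs__nns}, the non-adjacency needed to keep $\omega\le|X|-2$ and to rule out the configuration being a clique.
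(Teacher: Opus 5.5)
Your overall plan is the paper's: if $u_1,u_2$ have at least $6$ common neighbors in $C$, Lemma~\ref{lemma__d1_corr_col} gives a contradiction; otherwise tightness gives $|U_1|,|U_2|\ge 3$, and Lemma~\ref{lemma__33vamp} applied to $G[C\cup\{u_1,u_2\}]$ gives one. The paper phrases this as first proving $|M|\le 5$ for the set $M$ of common neighbors, and your second case is correct. The first case, however, has a gap. For your set $X=M\cup U_1\cup U_2\cup\{u_1,u_2\}$, the bound $\omega(G[X])\le|X|-2$ holds when both $U_1$ and $U_2$ are non-empty. You never handle the degenerate case. If $u_1$ and $u_2$ have exactly the same neighbors in $C$, then $U_1=U_2=\emptyset$, and $G[X]$ is either a clique or has clique number $|X|-1$. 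The global bound $\omega(G)\le\Delta-1$ does not rescue this, since $|X|$ can be far below $\Delta$. Your sentence for the case $u_1\nsim u_2$ is backwards: $X\setminus\{u_1,u_2\}$ being a clique gives $\omega(G[X])\ge|X|-2$, and $u_1$ extends that clique whenever $U_2=\emptyset$.

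The missing step is to add two \emph{distinct} vertices $u_1',u_2'\in C$ with $u_1\nsim u_1'$ and $u_2\nsim u_2'$. This is where $\Delta\ge 4p+8$ is used, not in bounding $|U_i|$. By Observation~\ref{obs__nns}, each $u_i$ has a non-neighbor in $C$. At most one of $u_1,u_2$ lies in $D$, so the other has at least $\frac{\Delta-3}{4}-p\ge\frac54$, hence at least two, non-neighbors in $C$. So such a choice exists. In $H=G[M\cup\{u_1,u_1',u_2,u_2'\}]$ (or in $G[C\cup\{u_1,u_2\}]$), the non-edges $u_1u_1'$ and $u_2u_2'$ are disjoint, so $\omega(H)\le|V(H)|-2$. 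Every vertex of $M$ is universal in $H$, whether or not $u_1\sim u_2$. Lemma~\ref{lemma__d1_corr_col} and Observation~\ref{obs__forbidden_d1_corr_col_ind_subgraph} then force $|M|\le 5$, and the rest of your argument goes through. Distinctness matters: with a single common non-neighbor $w$ and $u_1\sim u_2$, the set $M\cup\{u_1,u_2\}$ is a clique of size $|V|-1$ in $G[M\cup\{u_1,u_2,w\}]$.
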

\begin{proof}
    Suppose for a contradiction that $v$ has (at least) two distinct $(C,7)$-tight neighbors $u_1$ and $u_2$.
    By Observation~\ref{obs__nns}, $u_1$ and $u_2$ each have a non-neighbor in $C$,
    and since $\Delta\ge 4p+8$, at least one of them (one not belonging to $D$) has more than one non-neighbor in $C$.
    Thus, we can select distinct vertices $u'_1, u'_2\in C$ such that $u_1\nsim u'_1$ and $u_2\nsim u'_2$.
    
    Let $M$ be the set of common neighbors of $u_1$ and $u_2$ in $C$ (including the vertex $v$),
    and consider the subgraph $H$ of $G$ induced by $M\cup \{u_1,u'_1,u_2,u'_2\}$.  Since $u_1\nsim u'_1$ and $u_2\nsim u'_2$,
    we have $\omega(H)\le |V(H)|-2$.  By Observation~\ref{obs__forbidden_d1_corr_col_ind_subgraph}, the induced subgraph $H$ is not $\minusjedna_H$-correspondence colorable,
    and since the vertices of $M$ are universal in $H$, Lemma~\ref{lemma__d1_corr_col} implies $|M|\le 5$.

    For $i\in [2]$, let $U_i$ be the set of vertices of $C$ adjacent to $u_i$ and non-adjacent to $u_{3-i}$.
    Since the vertices $u_1$ and $u_2$ are $(C,7)$-tight and $|M|\le 5$, we have $|U_1|, |U_2|\ge 3$.
    However, then the induced subgraph $H'=G[C\cup\{u_1,u_2\}]$ is $\minusjedna_{H'}$-correspondence colorable by Lemma~\ref{lemma__33vamp},
    contradicting Observation~\ref{obs__forbidden_d1_corr_col_ind_subgraph}.
\end{proof}

In the probabilistic part of the argument, the vertices $v\in C$ with at least $2+\deg v-\Delta$ $(C,7)$-loose neighbors
(i.e., with at least two $(C,7)$-loose neighbors if $\deg v=\Delta$, and with at least one $(C,7)$-loose neighbor if $\deg v=\Delta-1$)
have a reasonable probability of ending up with a positive excess.  Indeed, it turns out that each of their $(C,7)$-loose neighbors has a decent chance
of belonging to the colored subgraph and blocking at $v$ the same color as some vertex of $C$.  If this occurs for at least two vertices $v\in C$, then we can extend the partial coloring to $C$ greedily.
However, Lemma~\ref{lemma__no_two_vertices_can_see_together_many_vertices_in_max_clique} only ensures that $v$ has
at least $p+\deg v-\Delta$ $(C,7)$-loose neighbors.  This is not enough when $p=1$, i.e., the core $C$ has size $\Delta-1$.
Thus, we need a stronger result to deal with this special case.

To prove this stronger result, it will be necessary to employ a stronger reduction procedure, where we not only delete some of the vertices
of a minimal $\Delta$-counterexample, but also add a few edges.  However, we need to be a bit careful not to create a clique of size $\Delta$ by this addition.
Let $G$ be a graph such that $\omega(G)\le \Delta-1$.  We say that a vertex $v'$ of $G$ is a \emph{$\Delta$-troublemaker} for another vertex $v$
if $G$ contains a clique of size $\Delta-2$ whose vertices are common neighbors of $v$ and $v'$.
Note that since $\omega(G)\le \Delta-1$, we have $v\nsim v'$; and in order to avoid increasing the clique number of $G$ to $\Delta$, we need to avoid adding edges between $\Delta$-troublemakers.
To aid us with this, let us show the following results on inclusionwise-maximal cliques in minimal $\Delta$-counterexamples, which we are also going to need in Section~\ref{section_04}.
For a real number~$r$, let $\KK_r(G)$ be the system of all inclusionwise-maximal cliques in $G$ whose size is at least $r$.

\begin{lemma} \label{lemma__interssize}
  Let $\Delta$ be a positive integer, let $r=\tfrac{3}{4}\Delta+1$, and let $(G,L,\Pi)$ be a minimal $\Delta$-counterexample.
  For all distinct cliques $A,B\in \KK_r(G)$, if $V(A)\cap V(B)\neq \emptyset$, then $|V(A\cap B)|\ge \tfrac{1}{2}\Delta+1$.
\end{lemma}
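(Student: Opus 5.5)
Suppose for a contradiction that $A,B\in\KK_r(G)$ are distinct, share a vertex, and $k=|V(A\cap B)|<\tfrac12\Delta+1$. I will find an induced subgraph $H$ of $G$ that is $\minusjedna_H$-correspondence colorable, which contradicts Observation~\ref{obs__forbidden_d1_corr_col_ind_subgraph}. The natural choice is $H=G[V(A)\cup V(B)]$, or a suitable induced subgraph of it. Write $I=V(A)\cap V(B)$, $A'=V(A)\setminus I$ and $B'=V(B)\setminus I$.

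Both cliques are inclusionwise maximal and distinct, so $A'$ and $B'$ are nonempty. Moreover, no vertex of $B'$ is complete to $A$, and no vertex of $A'$ is complete to $B$. Since $|V(A)|,|V(B)|\ge r=\tfrac34\Delta+1$ and $k<\tfrac12\Delta+1$, we get $|A'|,|B'|>\tfrac14\Delta$.

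The key degree count concerns a vertex $w\in I$. It is adjacent to all of $A'\cup B'\cup(I\setminus\{w\})$, so $\deg_G w\ge |V(A)|+|V(B)|-k-1$. On the other hand $\deg_G w\le\Delta$, which gives $k\ge |V(A)|+|V(B)|-\Delta-1\ge \tfrac12\Delta+1$. This already contradicts the assumption on $k$, using only that the common vertex has degree at most $\Delta$. More carefully, $|V(A)|+|V(B)|\ge 2\lceil\tfrac34\Delta+1\rceil$, so $k\ge \tfrac32\Delta+2-\Delta-1=\tfrac12\Delta+1$.

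So the plan is just this counting argument: pick $w\in V(A)\cap V(B)$ and note that $N(w)\supseteq (V(A)\cup V(B))\setminus\{w\}$. Then apply inclusion–exclusion, $|V(A)\cup V(B)|=|V(A)|+|V(B)|-k$, and bound by $\Delta+1$. Minimality of the counterexample is not needed, beyond $\Delta(G)\le\Delta$.

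The only point needing care is the arithmetic with $r$ not being an integer; each clique has size at least $\lceil r\rceil\ge r$, and the bound still holds. I expect no real obstacle. If the intended statement were the sharper one requiring reducibility arguments, I would instead apply Lemma~\ref{lemma__d1_corr_col} to $G[I\cup\{a,a',b,b'\}]$. Here $a,a'\in A'$ and $b,b'\in B'$ are chosen with $a\nsim b$ and $a'\nsim b'$; then the vertices of $I$ are universal, and if $|I|\ge 6$ this gives a contradiction. However, the direct degree count above already yields the stated bound.
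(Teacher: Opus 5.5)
Your degree count is correct and is exactly the paper's proof: the paper picks a vertex $v\in V(A\cap B)$ and uses $\Delta\ge\deg v\ge |V(A)|+|V(B)|-|V(A\cap B)|-1\ge\tfrac32\Delta+1-|V(A\cap B)|$. As you observed, this uses only $\Delta(G)\le\Delta$. The contradiction setup and the reducibility fallback in your other paragraphs are unnecessary and can be dropped.
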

\begin{proof}
    For a vertex $v\in V(A\cap B)$, we have
    $$\Delta\ge \deg v\ge |V(A\cup B)|-1=|V(A)|+|V(B)|-|V(A\cap B)|-1\ge \tfrac{3}{2}\Delta+1-|V(A\cap B)|,$$
    and thus $|V(A\cap B)|\ge \tfrac{1}{2}\Delta+1$.
\end{proof}

\begin{lemma} \label{lemma__no_three_intersecting_cliques}
    Let $\Delta\ge 20$ be an integer, let $r=\tfrac{3}{4}\Delta+1$, and let $(G,L,\Pi)$ be a minimal $\Delta$-counterexample.
    Then every clique in $\KK_r(G)$ intersects at most one other clique in $\KK_r(G)$.
\end{lemma}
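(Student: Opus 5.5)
Assume for a contradiction that some clique $A\in\KK_r(G)$ intersects two other cliques $B,C\in\KK_r(G)$ with $B\neq C$. The plan is to find a large common core $X=V(A)\cap V(B)\cap V(C)$. Then the induced subgraph $H=G[V(A)\cup V(B)\cup V(C)]$ has at least six universal vertices and $\omega(H)\le|V(H)|-2$. Lemma~\ref{lemma__d1_corr_col} then makes $H$ $\minusjedna_H$-correspondence colorable, contradicting Observation~\ref{obs__forbidden_d1_corr_col_ind_subgraph}.

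\emph{Step 1: $X$ is nonempty.} By Lemma~\ref{lemma__interssize}, $|V(A\cap B)|,|V(A\cap C)|\ge\tfrac12\Delta+1$. Since $|V(A)|\le\omega(G)\le\Delta-1$, we get
$$|X|\ge |V(A\cap B)|+|V(A\cap C)|-|V(A)|\ge \Delta+2-(\Delta-1)=3,$$
so some vertex $v\in X$ exists.

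\emph{Step 2: $X$ has at least six vertices.} Let $U=V(A)\cup V(B)\cup V(C)$. Every vertex of $U$ lies in $N(v)\cup\{v\}$, so $|U|\le\Delta+1$. Each vertex of $U$ lies in at most three of the cliques, and the vertices lying in all three are exactly those of $X$. Hence
$$|V(A)|+|V(B)|+|V(C)|\le 2|U|+|X|,$$
and therefore
$$|X|\ge 3r-2(\Delta+1)=\tfrac14\Delta+1\ge 6$$
for $\Delta\ge 20$. Every vertex of $X$ lies in all three cliques, so it is adjacent to every other vertex of $U$. Thus $X$ consists of universal vertices of $H$.

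\emph{Step 3: $\omega(H)\le|V(H)|-2$.} $H$ is not a clique, since otherwise $A$ would not be a maximal clique of $G$ (we have $A\ne B$). Suppose $\omega(H)=|V(H)|-1$, say $H-w$ is a clique for some vertex $w$. Then every inclusionwise-maximal clique of $H$ is either $V(H)\setminus\{w\}$ or the closed neighborhood of $w$ in $H$, so $H$ has at most two maximal cliques. But $A$, $B$, and $C$ are maximal cliques of $G$ contained in $H$, hence three distinct maximal cliques of $H$, a contradiction.

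The conclusion now follows from Lemma~\ref{lemma__d1_corr_col} and Observation~\ref{obs__forbidden_d1_corr_col_ind_subgraph}. The step needing the most care is the counting in Step 2: the inequality must use that $v\in X$ bounds the size of the whole union $U$, not merely of pairwise unions.
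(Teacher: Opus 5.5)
Your proof is correct, but it reaches the contradiction through a different lemma than the paper does. The paper also picks $v$ in the triple intersection, and it rules out $\omega(H)\ge|V(H)|-1$ with the same ``at most two maximal cliques'' argument. From there it uses only the single universal vertex $v$. It writes $H=B\vee v$ with $B=H-v$, notes that $|V(B)|\le\Delta$ and $\delta(B)\ge r-2\ge\tfrac12(|V(B)|+7)$, and applies Lemma~\ref{lemma__near_clique}.

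You instead sharpen the counting: a vertex of $U$ outside $X$ lies in at most two of the three cliques, which gives $|X|\ge 3r-2(\Delta+1)=\tfrac14\Delta+1\ge 6$ universal vertices. You then apply Lemma~\ref{lemma__d1_corr_col} directly, much as the paper later does in Lemma~\ref{lemma__union_of_two_cliques_is_near_clique}.

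Your route is more elementary and self-contained. It needs only the ``many universal vertices'' machinery, so it bypasses Lemma~\ref{lemma__near_clique} and with it the three-non-edge case of Lemma~\ref{lemma__near_clique_three_non-edges}. The paper's route avoids the extra inclusion--exclusion and reuses Lemma~\ref{lemma__near_clique}, which it needs anyway for Theorem~\ref{thm__many_nones}. Both arguments fit within the hypothesis $\Delta\ge 20$.
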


\begin{proof}
    Suppose for a contradiction that a clique $C\in \KK_r(G)$ intersects two distinct cliques $D,D'\in \KK_r(G)$. By Lemma~\ref{lemma__interssize}, we
    have $|V(C\cap D)|\ge \tfrac{1}{2}\Delta+1$ and $|V(C\cap D')|\ge \tfrac{1}{2}\Delta+1$. Since $|V(C)|\le \Delta-1$, this implies that $V(C \cap D \cap D')\neq \emptyset$.
    Let $v$ be any vertex of $C \cap D \cap D'$, let $B = G[(C \cup D \cup D') \setminus \{v\}]$ and let $H = B \vee v = G[C \cup D \cup D']$.

    Note that $\omega(H) \leq |V(H)| - 2$, and thus also $\omega(B)\le |V(B)|-2$.
    Indeed, if we had $\omega(H)\ge |V(H)|-1$, then $H$ would be obtained from a clique by deleting some of the edges incident with one of its vertices. 
    However, then $H\le G$ would contain at most two inclusionwise-maximal cliques, which is not possible since $C, D, D'\subseteq H$ are three distinct inclusionwise-maximal cliques in $G$.

    Since the vertex $v$ is universal in $H$ and each vertex of $H$ belongs to a clique of size at least $r$, we have $|V(H)| \leq \Delta + 1$ and $\delta(H) \geq r-1 \geq \frac{3}{4}\Delta$.
    Consequently, $|V(B)| \leq \Delta$ and $\delta(B) \geq \frac{3}{4} \Delta - 1 \geq \frac{1}{2}(|V(B)|+7)$.
    Therefore, by Lemma~\ref{lemma__near_clique}, the induced subgraph $H$ of $G$ is $\minusjedna_H$-correspondence colorable; this contradicts Observation~\ref{obs__forbidden_d1_corr_col_ind_subgraph}.
\end{proof}

The following claim then easily follows.

\begin{corollary}\label{cor__troublemaker_safe}
  Let $\Delta\ge 20$ be an integer.  If $(G,L,\Pi)$ is a minimal $\Delta$-counterexample, then the following claims hold.
  \begin{itemize}
  \item There is at most one $\Delta$-troublemaker for every vertex of $G$.
  \item Let $v_{1,1},v_{1,2},v_{2,1},v_{2,2}\in V(G)$ be distinct vertices such that $v_{1,1}\nsim v_{1,2}$ and $v_{2,1}\nsim v_{2,2}$.
  If $v_{1,1}$ is not a $\Delta$-troublemaker for $v_{1,2}$ and $v_{2,1}$ is not a $\Delta$-troublemaker for $v_{2,2}$,
  then $\omega(G+\{v_{1,1}v_{1,2},v_{2,1}v_{2,2}\})\le \Delta-1$.
  \end{itemize}
\end{corollary}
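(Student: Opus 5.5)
For the first claim, I will argue by contradiction. Suppose a vertex $v$ has two distinct $\Delta$-troublemakers $v'$ and $v''$. By definition there are cliques $Q'$ and $Q''$, each of size $\Delta-2$, such that every vertex of $Q'$ is a common neighbor of $v$ and $v'$, and every vertex of $Q''$ is a common neighbor of $v$ and $v''$.

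Then $Q'\cup\{v\}$ and $Q'\cup\{v'\}$ are cliques of size $\Delta-1\ge r=\tfrac34\Delta+1$, since $\Delta\ge 20$. Extend each to an inclusionwise-maximal clique. Because $\omega(G)\le\Delta-1$, each of these maximal cliques is exactly the set itself. So $A'=Q'\cup\{v\}$ and $B'=Q'\cup\{v'\}$ lie in $\KK_r(G)$. They are distinct, since $v\nsim v'$, and they intersect in $Q'$.

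In the same way, $A''=Q''\cup\{v\}$ and $B''=Q''\cup\{v''\}$ lie in $\KK_r(G)$, are distinct, and intersect.

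Now $A'$ and $A''$ both contain $v$. If $A'\ne A''$, then $A'$ intersects both $B'$ and $A''$. Since $B'\ne A''$ ($v\in A''$ but $v\notin B'$), this is two different cliques, contradicting Lemma~\ref{lemma__no_three_intersecting_cliques}. Hence $A'=A''$, so $Q'=Q''$.

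Then $B'$ and $B''$ are both maximal cliques meeting $A'$ in $Q'$. Lemma~\ref{lemma__no_three_intersecting_cliques} forces $B'=B''$, so $v'=v''$. This contradicts the assumption that the two troublemakers were distinct.

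For the second claim, suppose $G'=G+\{v_{1,1}v_{1,2},v_{2,1}v_{2,2}\}$ contains a clique $K$ of size $\Delta$. Since $\omega(G)\le \Delta-1$, $K$ must use at least one added edge. There are two cases.

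\emph{Case 1: $K$ uses only one added edge}, say $v_{1,1}v_{1,2}$. Then $K\setminus\{v_{1,1},v_{1,2}\}$ is a clique in $G$ of size $\Delta-2$ whose vertices are common neighbors of $v_{1,1}$ and $v_{1,2}$. So $v_{1,1}$ is a $\Delta$-troublemaker for $v_{1,2}$, contradicting the hypothesis.

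\emph{Case 2: $K$ uses both added edges.} Then all four vertices lie in $K$, and $Q=K\setminus\{v_{1,1},v_{1,2},v_{2,1},v_{2,2}\}$ is a clique of size $\Delta-4$ in $G$. Every other pair of vertices of $K$ is adjacent in $G$. The four cliques
\[
Q\cup\{v_{1,i},v_{2,j}\},\qquad i,j\in[2],
\]
all have size $\Delta-2\ge r$. For each one, take a maximal clique containing it; these lie in $\KK_r(G)$. They are pairwise distinct, since a maximal clique cannot contain both $v_{1,1}$ and $v_{1,2}$, nor both $v_{2,1}$ and $v_{2,2}$. All of them contain $Q\neq\emptyset$. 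So one clique intersects three others, contradicting Lemma~\ref{lemma__no_three_intersecting_cliques}.

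The main point requiring care is checking that these maximal cliques are genuinely distinct, which the non-adjacencies above guarantee, and that the sizes exceed $r$, which holds because $\Delta-4\ge \tfrac34\Delta+1$ when $\Delta\ge 20$.
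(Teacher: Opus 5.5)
Your proof is correct and follows essentially the paper's argument: in both parts you exhibit a clique of $\KK_r(G)$ that meets two other cliques of $\KK_r(G)$, contradicting Lemma~\ref{lemma__no_three_intersecting_cliques}. The differences are cosmetic. In the first part, the paper uses $\deg v\le\Delta<2(\Delta-2)$ to force $Q'\cap Q''\neq\emptyset$ and applies the lemma once, to $Q'\cup\{v\}$ together with $Q'\cup\{v'\}$ and $Q''\cup\{v''\}$; you instead use the shared vertex $v$ and apply the lemma twice. In the second part, the paper's cliques meet in the shared endpoints $v_{1,1}$ and $v_{2,1}$ rather than in your common set $Q$.
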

\begin{proof}
Let $r=\tfrac{3}{4}\Delta+1$ and note that every clique of size $\Delta-1$ in $G$ is contained in $\KK_r(G)$.

For the first part, suppose for a contradiction that $v_1$ and $v_2$ are two distinct $\Delta$-troublemakers for a vertex $v\in V(G)$.
For $i\in[2]$, let $K_i\subseteq G$ be a clique on $\Delta-2$ common neighbors of $v$ and $v_i$.
Since $\deg v\le \Delta<2(\Delta-2)$, the cliques $K_1$ and $K_2$ intersect. Moreover, since $v_1\nsim v$ and $v_2\nsim v$,
we have $v_1,v_2\not\in V(K_1\cup K_2)$.  Thus, the clique $K_1\join v\in \KK_r(G)$
intersects distinct cliques $K_1\join v_1,K_2\join v_2\in \KK_r(G)$, contradicting Lemma~\ref{lemma__no_three_intersecting_cliques}.

For the second part, suppose for a contradiction that $G+\{v_{1,1}v_{1,2},v_{2,1}v_{2,2}\}$ contains a clique $K$ of size $\Delta$.  Since $v_{1,1}$ is not a $\Delta$-troublemaker
for $v_{1,2}$ and $v_{2,1}$ is not a $\Delta$-troublemaker for $v_{2,2}$, we necessarily have $v_{1,1},v_{1,2},v_{2,1},v_{2,2}\in V(K)$.  For $i,j\in\{1,2\}$,
let $K_{i,j}$ be a clique in $G$ containing $v_{1,i}$ and $v_{2,j}$ of largest possible size; clearly, $|V(K_{i,j})|\ge |V(K)|-2=\Delta-2\ge r$,
and thus $K_{i,j}\in \KK_r(G)$.  Moreover, the four cliques $K_{1,1}$, $K_{1,2}$, $K_{2,1}$, and $K_{2,2}$ distinct,
since $v_{1,1}\nsim v_{1,2}$ and $v_{2,1}\nsim v_{2,2}$.  However, the cliques $K_{1,2}$ and $K_{2,1}$ both intersect the clique $K_{1,1}$, contradicting Lemma~\ref{lemma__no_three_intersecting_cliques}.
\end{proof}

As a final preparatory step, we need the following observation on correspondence coloring of cliques.
\begin{lemma}\label{lemma__colocliq}
Let $H$ be a clique of size $k\ge 1$ and let $(L,\Pi)$ be a $(k-1)$-correspondence assignment for~$H$.
If $H$ is $(L,\Pi)$-colorable, then at least one of the following claims holds:
\begin{itemize}
\item[(a)] There exists a vertex $v\in V(H)$ such that $|L(v)|\ge k$, %or
\item[(b)] there exist distinct vertices $z,v\in V(H)$ and a color $\alpha\in L(z)$ that does not block any color in $L(v)$ at $v$, or
\item[(c)] there exist distinct vertices $z_1,z_2,v\in V(H)$ and colors $\alpha_1\in L(z_1)$ and $\alpha_2\in L(z_2)$ 
such that $\{(z_1,\alpha_1),(z_2,\alpha_2)\}\not\in \Pi(z_1z_2)$ and the colors $\alpha_1$ at $z_1$ and $\alpha_2$ at $z_2$ together block at most one
color in $L(v)$ at $v$.
\end{itemize}
\end{lemma}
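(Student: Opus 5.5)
The plan is to argue by contraposition. I assume that none of (a), (b), (c) holds and show that $H$ has no $(L,\Pi)$-coloring. Since (a) fails, every list has size exactly $k-1$. Since (b) fails, for every ordered pair of distinct vertices $z,v$, every color $\alpha\in L(z)$ blocks some color of $L(v)$ at $v$.

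Now suppose for a contradiction that $\varphi$ is an $(L,\Pi)$-coloring of $H$. Fix any vertex $v\in V(H)$. I count the colors of $L(v)$ blocked at $v$ by the $k-1$ other vertices of $H$, each colored by $\varphi$.
\begin{itemize}
\item By the failure of (b), each $\varphi(z)$ with $z\neq v$ blocks some color $\beta_z\in L(v)$ at $v$.
\item For distinct $z_1,z_2\neq v$, the edge $z_1z_2$ exists because $H$ is a clique. Since $\varphi$ is a proper coloring, $\{(z_1,\varphi(z_1)),(z_2,\varphi(z_2))\}\notin\Pi(z_1z_2)$.
\item So the failure of (c) implies that these two colors together block at least two colors at $v$. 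As each blocks exactly one, this gives $\beta_{z_1}\neq\beta_{z_2}$.
\end{itemize}
Hence the colors $\beta_z$ for $z\in V(H)\setminus\{v\}$ are $k-1$ pairwise distinct elements of $L(v)$, which has size $k-1$. Therefore every color of $L(v)$ is blocked, including $\varphi(v)$, a contradiction.

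The small cases fit the same argument. For $k=1$, the list of the single vertex has size $0$, so no coloring exists. For $k=2$, the pairwise-distinctness step is vacuous, and the single neighbor already blocks the unique color of $L(v)$.

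There is no real obstacle here. The only care needed is to use that a proper coloring of a clique supplies, for every pair $z_1,z_2$, exactly the non-conflicting pair of colors required in (c). It also helps to note that "together block at most one color" fails exactly when both colors block distinct colors at $v$.
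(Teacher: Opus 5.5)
Your proof is correct and is essentially the paper's argument. The paper straightens the star centred at $v$ so that the colour $\varphi(z)$ blocks at $v$ is $\varphi(z)$ itself, then applies pigeonhole to $\varphi$ on $k$ vertices and $k-1$ colours; your blocked colours $\beta_z$ are exactly the coordinate-free form of this, so you get the same count without renaming colours.
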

\begin{proof}
Suppose that neither (a) nor (b) holds, and thus $(L,\Pi)$ is a full exact $(k-1)$-correspondence assignment.
Let $v$ be an arbitrary vertex of $H$ and let $T$ be the star with the vertex set $V(H)$ and the center $v$.
By Observation~\ref{obs__straightening}, we can assume that all vertices have the same list $\Gamma$ and that $T$ is straight in the correspondence assignment $(L,\Pi)$.

By the assumptions, $H$ has an $(L,\Pi)$-coloring $\varphi$.  Since $|\Gamma|=k-1<|V(H)|$,
there exist distinct vertices $z_1,z_2\in V(H)$ such that $\varphi(z_1)=\varphi(z_2)$.
Since the edges incident with $v$ are straight and the correspondence assignment $(L,\Pi)$ is full, we have $z_1\neq v\neq z_2$;
and moreover, the colors $\varphi(z_1)$ and $\varphi(z_2)$ together block only one color at $v$, showing that (c) holds.
\end{proof}

We are now ready to restrict the neighborhoods in cores of size $\Delta-1$.

\begin{lemma} \label{lemma__no_bad_vertex_in_D-1_clique}
    Let $\Delta\ge 20$ be an integer and let $(G,L,\Pi)$ be a minimal $\Delta$-counterexample.
    Let $D$ be a $\Delta$-isolated near-clique in $G$ and let $C$ be a core of $D$.
    If $|C|=\Delta-1$, then every vertex $u\in V(G)\setminus C$ has at most $85$ neighbors in $C$.
\end{lemma}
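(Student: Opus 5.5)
The plan is to argue by contradiction with a reduction that deletes $C$ and adds a few edges, as announced in the introduction (Claim~\ref{cl__nomatch}). Suppose some $u\in V(G)\setminus C$ has a set $N$ of at least $86$ neighbors in $C$, and let $W=C\setminus N$.

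\textbf{Local structure of $C$.} By Observation~\ref{obs__mindeg}, every $x\in C$ has $\deg x\ge \Delta-1$, and $x$ has exactly $\Delta-2$ neighbors inside $C$. So $x$ has one or two neighbors outside $C$. In particular, each vertex of $N$ has at most one outside neighbor other than $u$. By Observation~\ref{obs__nns}, $u$ has at least $\tfrac{\Delta-3}{4}-1$ non-neighbors in $C$, so $W$ is large. The vertex of $D\setminus C$, if it exists, is handled separately as a special case.

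\textbf{The reduction.} Straighten the star centred at a vertex of $C$ using Observation~\ref{obs__straightening}, so that all of $C$ uses a common palette $\Gamma$ of size $\Delta-1$ and every edge of $C$ is the identity. Since $(L,\Pi)$ is full, any $(L,\Pi)$-coloring $\psi$ of $G-C$ leaves each $x\in C$ with $|L_\psi(x)|\ge \Delta-1-(\text{number of outside neighbors of }x)$. A proper coloring of $C$ from $\Gamma$ is then impossible only if the blocked colors are arranged very rigidly. By the greedy criterion (Observation~\ref{obs__greedy}) together with Lemma~\ref{lemma__colocliq}, this rigidity means the following: no vertex of $C$ has positive excess, and no two outside colors block the same color at a vertex in the way needed for option (b) or (c).

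I would therefore build $G'$ from $G-C$ by adding one or two edges $uw'$, where $w'$ is an outside neighbor of some $w\in W$ with $w'\nsim u$. On $uw'$ I choose the matching to pair $\alpha\in L(u)$ with $\beta\in L(w')$ exactly when $\alpha$ blocks at $N$ (equivalently, after straightening, in $\Gamma$) the same palette color that $\beta$ blocks at $w$. Any $(L,\Pi)$-coloring of $G'$ then forces $u$ and $w'$ to block different palette colors. Using the many vertices of $N$ and $W$, this freedom lets us give two vertices of $C$ the same palette color, or find a vertex of $C$ whose excess becomes positive. Either way, Observation~\ref{obs__greedy} extends the coloring to $C$.

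\textbf{Legality of $G'$.} The added edges only raise degrees of vertices that lost at least one neighbor in $C$, so $\Delta(G')\le\Delta$. To keep $\omega(G')\le \Delta-1$, I choose $w'$ not to be a $\Delta$-troublemaker for $u$ and apply Corollary~\ref{cor__troublemaker_safe}. Minimality of $(G,L,\Pi)$ then gives an $(L,\Pi)$-coloring of $G'$, and hence of $G-C$, with the extra property built in.

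\textbf{Main obstacle.} The hardest part is the counting and case analysis that guarantees good choices of $v\in N$, $w\in W$ and $w'$. The vertex $w'$ must avoid being adjacent to $u$ or equal to it, avoid being $u$'s unique troublemaker, and avoid colliding with the outside neighbors of the vertices of $N$ that the argument uses. One must also handle the case where the desired pairing yields a matching that forces too much, for instance when every vertex of $W$ has only $u$-adjacent outside neighbors. Lemma~\ref{lemma__no_two_vertices_can_see_together_many_vertices_in_max_clique} should control how many $(C,7)$-tight vertices can interfere. Each such obstruction excludes only a bounded number of vertices of $N$, and the constant $85$ should come from summing these exclusions. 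I would first carry out the case where $u$ is the only tight vertex, then patch the remaining configurations, possibly with two added edges for the second part of Corollary~\ref{cor__troublemaker_safe}.
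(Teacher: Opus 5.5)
There is a genuine gap. The step that should turn the constraint built into $G'$ into a coloring of $C$ is never carried out, and the set-up it relies on is not valid.

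First, Observation~\ref{obs__straightening} straightens only a tree. A star centred in $C$ gives you a common palette and straight edges at the centre, but the matchings on the other edges of $C$, and on the edges from $u$ to $N$, remain arbitrary. So ``the palette color that $\alpha$ blocks at $N$'' is not well defined, since it varies over $N$. Likewise, ``giving two vertices of $C$ the same palette color'' is neither forbidden nor automatically useful. This is exactly why the paper straightens only the small tree formed by the path $x'_1x'_2x'_3x'_4$ and the pendant edges $x_ix'_i$, and why Lemma~\ref{lemma__colocliq}(c) carries the condition on $\Pi(z_1z_2)$.

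Second, by keeping $u$ in $G'$ you lose the move the paper uses to finish. In the paper, $u$ and $u'$ are non-adjacent, both adjacent to the target vertex, and $|L(u)|$ is large, so Observation~\ref{obs__blocking_at_most_one_color} applied to $u,u'$ makes one excess positive for free. With $u$ already colored, every vertex of $N$ of degree $\Delta$ also has its second outside neighbor colored, and may sit at excess $-1$. Yet you need one vertex of $C$ with excess $\ge 1$ and another with excess $\ge 0$. The natural use of $\psi(u)\neq\psi(w')$ along a straightened path $u\,v\,w\,w'$ is to color $w$ with $\psi(u)$ or $v$ with $\psi(w')$. That raises only one excess by one, and can itself be blocked by the other outside neighbor of $v$ or $w$. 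Nothing in the proposal supplies the rest. Also, two added edges sharing $u$ are not covered by Corollary~\ref{cor__troublemaker_safe}, which needs four distinct vertices.

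Finally, you have no plan for the case where the reduction is unavailable. In the paper this case is the core of the proof and the source of the constant $85$.
\begin{itemize}
\item The paper lets $S$ be the vertices outside $C\cup\{u\}$ with a neighbor in $U=N$.
\item Claim~\ref{cl__atmostone} shows that at most one vertex of $U$ has degree $\Delta-1$.
\item Lemma~\ref{lemma__no_two_vertices_can_see_together_many_vertices_in_max_clique} shows that each $s\in S$ has at most $7$ neighbors in $U$, because $u$ is already a $(C,7)$-tight neighbor of every vertex of $U$.
\item Together these give $|S|\ge\lceil 85/7\rceil=13$.
\item If $S$ contains two disjoint pairs that are non-adjacent and not troublemakers, the paper deletes $C\cup\{u\}$ and adds these two disjoint edges inside $S$ (Claim~\ref{cl__nomatch}).
\item Otherwise $\omega(G[S])\ge\lceil 11/2\rceil=6$. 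A $6$-clique $K\subseteq S$ is then handled directly by uncoloring $K$, applying Lemma~\ref{lemma__colocliq}, and finishing with the $u,u'$ move.
\end{itemize}
Your ``summing exclusions'' contains no mechanism of this kind. The bad case you flag yourself, where every candidate $w'$ is adjacent to $u$ or is its troublemaker, is left open.
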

    \begin{proof}
        Let $U$ be the set of neighbors of $u$ in $C$, and for contradiction suppose that $|U|\ge 86$.
	Since $\omega(G)\le \Delta-1$, the vertex $u$ has a non-neighbor $u'$ in $C$.

	\begin{claim}\label{cl__atmostone}
	At most one vertex of $U$ has degree $\Delta-1$.
	\end{claim}
	\begin{subproof}
	Suppose for a contradiction that distinct vertices $v_1,v_2\in U$ have degree $\Delta-1$.
	Since $(G,L,\Pi)$ is a minimal $\Delta$-counterexample, there exists a partial $(L,\Pi)$-coloring $\psi_1$ of $G$
	assigning colors to all vertices not in $C\cup \{u\}$.  Note that $v_1$ and $v_2$ have $\psi_1$-excess at least $0$.
	Moreover, $\deg^{-\psi_1} v_2=|C|$, $|L_{\psi_1}(u')|\ge \deg^{-\psi_1} u'-1=|C|-2$, and $|L_{\psi_1}(u)|\ge |U|-1$,
	and thus
	$$|L_{\psi_1}(u')|+|L_{\psi_1}(u)|>\deg^{-\psi_1} v_2.$$
	By Observation~\ref{obs__blocking_at_most_one_color}, we can extend $\psi_1$ to a partial $(L, \Pi)$-coloring $\psi$
	by choosing colors for $u$ and $u'$ so that the $\psi$-excess of $v_2$ is positive.
	By Observation~\ref{obs__greedy}, we can extend $\psi$ to an $(L,\Pi)$-coloring of $G$, coloring $v_1$ and $v_2$ last.
	This is a contradiction.
	\end{subproof}

	Let $S$ be the set of vertices in $V(G)\setminus (C\cup \{u\})$ with at least one neighbor in $U$.
	Since $|C|=\Delta-1$ and each vertex of $U$ is adjacent to $u$, each vertex of $U$ of degree $\Delta$ has exactly one neighbor in $S$.
	On the other hand, since the vertex $u$ is a $(C,7)$-tight neighbor of each vertex of $U$, Lemma~\ref{lemma__no_two_vertices_can_see_together_many_vertices_in_max_clique}
	implies that each vertex of $S$ has at most $7$ neighbors in $U$. Thus, by Claim~\ref{cl__atmostone}, we have
	$$|S|\ge \lceil \tfrac{|U|-1}{7}\rceil\ge 13.$$
	For each vertex $s\in S$, choose a neighbor $s'\in U$ arbitrarily.  Since each vertex of $U$ has at most one neighbor in $S$,
	$\{ss':s\in S\}$ is a matching, and $s$ is the unique neighbor of $s'$ in $S$.  Let $S'=\{s':s\in S\}$.
	Let $F$ be the auxiliary graph obtained from $G[S]$ by adding an edge between any distinct vertices $x,y\in S$ such that $y$ is a $\Delta$-troublemaker for $x$.
        \begin{claim}\label{cl__nomatch}
            The complement of $F$ does not contain any matching of size two.
        \end{claim}
	\begin{subproof}
	Suppose for a contradiction that $x_1,x_2,x_3,x_4\in S$ are distinct vertices such that $x_1\nsim_F x_2$ and $x_3\nsim_F x_4$.
	Let $T$ be the tree consisting of the path $x'_1x'_2x'_3x'_4$ and the edges $x_ix'_i$ for $i\in [4]$.
	By Observation~\ref{obs__straightening}, we can assume that $T$ is straight in the correspondence assignment $(L, \Pi)$,
	and moreover, all the vertices of $T$ have the same list $\Gamma$ of size $\Delta-1$.

	Let $G'$ be the graph obtained from $G-(C\cup\{u\})$ by adding the edges $x_1x_2$ and $x_3x_4$.
	Let $(L,\Pi')$ be the correspondence assignment obtained from $(L,\Pi)$ by
	defining $\Pi'(x_ix_{i+1})=\{\{(x_i,\gamma),(x_{i+1},\gamma)\}:\gamma\in\Gamma\}$ for $i\in\{1,3\}$.
	Thus, $(L,\Pi')$ is a full exact $(\Delta-1)$-correspondence assignment and the edges $x_1x_2$ and $x_3x_4$ are straight.
	Clearly $\Delta(G')\le \Delta$, and by Corollary~\ref{cor__troublemaker_safe} and the choice of $F$, we have $\omega(G')\le \Delta-1$.
	Since $(G,L,\Pi)$ is a minimal $\Delta$-counterexample, the graph $G'$ has an $(L,\Pi')$-correspondence coloring~$\psi$,
	which we view as a partial $(L,\Pi)$-coloring of $G$.

	Note that $x_2$ is the only neighbor of $x'_2$ in $\dom(\psi)$ and the edge $x_2x'_2$ is straight
	in the full correspondence assignment $(L,\Pi)$, and thus $L_\psi(x'_2)=\Gamma\setminus \psi(x_2)$.
	Since the edge $x_1x_2$ is straight in the correspondence assignment $(L,\Pi')$, we have $\psi(x_1)\neq\psi(x_2)$,
	and thus $\psi(x_1)\in L_\psi(x'_2)$.  By symmetric arguments we also have
	$\psi(x_2)\in L_\psi(x'_1)$, $\psi(x_3)\in L_\psi(x'_4)$, and $\psi(x_4)\in L_\psi(x'_3)$.

        Let us first try to color the vertex $x'_2$ by the color $\psi(x_1)\in L_\psi(x'_2)$
	and the vertex $x'_4$ by the color $\psi(x_3)\in L_\psi(x'_4)$.  Suppose that this succeeds, i.e., that the color $\psi(x_1)$ at $x'_2$ does not block the color $\psi(x_3)$ at $x'_4$,
	and let $\psi_1$ be the corresponding extension of $\psi$.  Since the edges of $T$ are straight in the correspondence assignment $(L,\Pi)$, this ensures that the vertices $x'_1$ and $x'_3$
	have non-negative $\psi_1$-excess.  Note that $|L_{\psi_1}(u')|\ge |C|-4$, $|L_{\psi_1}(u)|\ge |U|-3$, and $\deg^{-\psi}(x'_3)=|C|-2$,
	and thus
	\begin{equation}
	|L_{\psi_1}(u')|+|L_{\psi_1}(u)|>\deg^{-\psi_1} x'_3.\label{eq__canenb}
	\end{equation}
	By Observation~\ref{obs__blocking_at_most_one_color}, we can extend $\psi_1$ to a partial $(L, \Pi)$-coloring $\psi_2$
        by choosing colors for $u$ and $u'$ so that the $\psi_2$-excess of $x'_3$ is positive.
        By Observation~\ref{obs__greedy}, we can extend $\psi_2$ to an $(L,\Pi)$-coloring of~$G$, coloring the vertices $x'_1$ and $x'_3$ last.
        This is a contradiction.

	Therefore, we can assume that the color $\psi(x_1)$ at $x'_2$ blocks the color $\psi(x_3)$ at $x'_4$.
	Symmetrically, we can also assume that for every $i\in \{1,2\}$ and $j\in\{3,4\}$,
	the color $\psi(x_i)$ at $x'_{3-i}$ blocks the color $\psi(x_j)$ at $x'_{7-j}$.

	Let us extend $\psi$ to a partial coloring $\psi_3$ by giving $x'_1$ the color $\psi(x_2)$ and $x'_2$ the color $\psi(x_1)$
	(this is possible, since $\psi(x_1)\neq\psi(x_2)$ and the edge $x'_1x'_2$ is straight).  Note that this blocks only the color $\psi(x_4)$ at $x'_3$
	and only the color $\psi(x_3)$ at $x'_4$, and thus $x'_3$ and $x'_4$ have non-negative $\psi_3$-excess.
	A calculation analogous to (\ref{eq__canenb}) now shows that by Observation~\ref{obs__blocking_at_most_one_color}, we can extend $\psi_3$ to a partial $(L, \Pi)$-coloring $\psi_4$
        by choosing colors for $u$ and $u'$ so that the $\psi_4$-excess of $x'_3$ is positive.
        By Observation~\ref{obs__greedy}, we can extend $\psi_4$ to an $(L,\Pi)$-coloring of $G$, coloring the vertices $x'_4$ and $x'_3$ last.
        This is again a contradiction.
	\end{subproof}

	Since $\overline{F}$ does not contain any matching of size two, we have $\omega(F)\ge |V(F)|-2=|S|-2\ge 11$.
	By Corollary~\ref{cor__troublemaker_safe} and the construction of $F$, the set $E(F)\setminus E(G[S])$ forms a matching,
	and thus $\omega(G[S])\ge \lceil \omega(F)/2\rceil\ge 6$.

	Let $K\subseteq S$ be a set of $6$ vertices inducing a clique in $G$.
	Since $(G,L,\Pi)$ is a minimal $\Delta$-counterexample, the graph $G-(C\cup \{u\})$ has an $(L,\Pi)$-coloring $\psi_0$.
	Let $\psi$ be the partial $(L,\Pi)$-coloring of $G$ obtained from $\psi_0$ by uncoloring the vertices of $K$.
	Thus, the clique $G[K]$ is $(L_\psi,\Pi_\psi)$-colorable.  Moreover, since $(L_\psi,\Pi_\psi)$ is a $\minusjedna$-correspondence assignment
	for the graph $G'=G[C\cup K\cup\{u\}]$ and each vertex $x\in K$ has a neighbor $x'\not\in K$ in $G'$, the assignment $(L_\psi,\Pi_\psi)$ restricted to the clique $G[K]$
	is a $5$-correspondence assignment.  Hence, the claim (a), (b), or (c) from the statement of Lemma~\ref{lemma__colocliq} holds.
	Let $v\in K$ be the vertex corresponding to this claim, and let $z_1,z_2\in K\setminus\{v\}$ be distinct vertices
	chosen arbitrarily if (a) holds, chosen so that $z_1=z$ and $z_2$ is arbitrary if (b) holds, and chosen as the two vertices mentioned
	in the claim if (c) holds.  In all three cases, we can extend $\psi$ to a partial $(L,\Pi)$-coloring $\psi_1$ by choosing the colors of vertices $z_1$ and $z_2$ so that $|L_{\psi_1}(v)|\ge |K\setminus \{z_1,z_2\}|$.

	Let $u_1$ and $u_2$ be arbitrarily chosen distinct vertices of $K\setminus \{v,z_1,z_2\}$.
	Note that $|L_{\psi_1}(z'_1)|\ge |C|-1$, $|L_{\psi_1}(u_1)|\ge |K|-3=3$, and $\deg^{-\psi_1} u'_1=|C|+1$, and thus
	$$|L_{\psi_1}(z'_1)|+|L_{\psi_1}(u_1)|>\deg^{-\psi_1} u'_1-1.$$
	By Observation~\ref{obs__blocking_at_most_one_color}, we can extend $\psi_1$ to a partial $(L, \Pi)$-coloring $\psi_2$
        by choosing colors for $u_1$ and $z'_1$ so that the $\psi$-excess of $u'_1$ is non-negative.

	Similarly, note that $|L_{\psi_2}(z'_2)|\ge |C|-2$, $|L_{\psi_2}(u_2)|\ge |K|-4\ge 2$, and $\deg^{-\psi_2} u'_2=|C|$, and thus
	$$|L_{\psi_2}(z'_2)|+|L_{\psi_2}(u_2)|>\deg^{-\psi_2} u'_2-1.$$
	By Observation~\ref{obs__blocking_at_most_one_color}, we can extend $\psi_2$ to a partial $(L, \Pi)$-coloring $\psi_3$
        by choosing colors for $u_2$ and $z'_2$ so that the $\psi_3$-excess of $u'_2$ is non-negative.

	Next, let us extend $\psi_3$ to a partial partial $(L, \Pi)$-coloring $\psi_4$ by coloring the vertices of $K'=K\setminus \{u_1,u_2,z_1,z_2\}$ greedily,
	assigning the color to $v$ last.  This is possible, since $(L_{\psi_3},\Pi_{\psi_3})$ restricted to the clique $G[K']$
	is a $(|K'|-1)$-correspondence assignment and $|L_{\psi_3}(v)|\ge |K'|$ by the choice of $\psi_1$.

	Now note that $|L_{\psi_4}(u')|\ge |C|-4$, $|L_{\psi_4}(u)|\ge |U|-3\ge 83$, and $\deg^{-\psi_4} u'_2=|C|-2$, and thus
        $$|L_{\psi_4}(u')|+|L_{\psi_4}(u)|>\deg^{-\psi_4} u'_2.$$
        By Observation~\ref{obs__blocking_at_most_one_color}, we can extend $\psi_4$ to a partial $(L, \Pi)$-coloring $\psi_5$
        by choosing colors for $u$ and $u'$ so that the $\psi_5$-excess of $u'_2$ is positive.

	Finally, we can extend $\psi_5$ to an $(L,\Pi)$-coloring of $G$ by Observation~\ref{obs__greedy}, coloring the vertices $u'_1$ and~$u'_2$ last.
	This is a contradiction.
    \end{proof}

Using Lemmas~\ref{lemma__no_two_vertices_can_see_together_many_vertices_in_max_clique} and \ref{lemma__no_bad_vertex_in_D-1_clique},
we can now prove the second key result of this section, which essentially states that for a core of any large $\Delta$-isolated near-clique, there are many disjoint opportunities to
obtain vertices with positive excess in the probabilistic part of the argument.
Let $G$ be a graph and let $C$ and $X$ be subsets of its vertices.
For positive integers $m$ and $\Delta$, we say that $X$ is a \emph{$(\Delta,m)$-benefactor for $C$} if
\begin{itemize}
\item $X\cap C$ consists of a single vertex $v$,
\item the vertices in $X\setminus \{v\}$ are $(C,m)$-loose neighbors of $v$, and
\item $|X\setminus\{v\}|=2+\deg v-\Delta$.
\end{itemize}

\begin{theorem} \label{thm__disjoint_triples}
    Let $\Delta$ be a positive integer and let $(G,L,\Pi)$ be a minimal $\Delta$-counterexample.
    Let $D$ be a $\Delta$-isolated near-clique in $G$, let $C$ be a core of $D$, and let $p=\Delta-|C|$.
    If $\Delta\ge 4p+16$, then there exists a set $\WW$ of at least $\tfrac{1}{100}\Delta$ pairwise
    disjoint $(\Delta,85)$-benefactors for $C$.
\end{theorem}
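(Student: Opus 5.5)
We will construct $\WW$ greedily: each $v\in C$ with enough loose neighbors yields a benefactor, and since each chosen benefactor conflicts with only boundedly many others, a maximal family is large. The basic count is as follows. Fix $v\in C$. It has $\deg v-(|C|-1)=\deg v-\Delta+p+1$ neighbors outside $C$. By Observation~\ref{obs__mindeg}, $\deg v\in\{\Delta-1,\Delta\}$, so $2+\deg v-\Delta\in\{1,2\}$.

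\emph{Case $p\ge 2$.} Since $\Delta\ge 4p+16\ge 4p+8$, Lemma~\ref{lemma__no_two_vertices_can_see_together_many_vertices_in_max_clique} says that at most one neighbor of $v$ outside $C$ is $(C,7)$-tight. Hence $v$ has at least $\deg v-\Delta+p\ge 2+\deg v-\Delta$ neighbors that are $(C,7)$-loose, and therefore $(C,85)$-loose.

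\emph{Case $p=1$.} Here $|C|=\Delta-1$. Lemma~\ref{lemma__no_bad_vertex_in_D-1_clique} (valid as $\Delta\ge 20$) shows that every vertex outside $C$ is $(C,85)$-loose. So all $\deg v-\Delta+2$ outside neighbors of $v$ qualify.

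\emph{Case $p\le 0$.} This cannot happen, since $|C|\le\omega(G)\le\Delta-1$.

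Thus every $v\in C$ admits a $(\Delta,85)$-benefactor $X_v$, consisting of $v$ together with $2+\deg v-\Delta\le 2$ of its loose neighbors.

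To make the family disjoint, we pick benefactors greedily, at most one per vertex of $C$. The only possible overlap between $X_v$ and $X_w$ for $v\ne w$ is a shared loose vertex outside $C$. Each such loose vertex has at most $85$ neighbors in $C$. So one chosen benefactor (with at most two loose vertices) can block at most $2\cdot 85=170$ other vertices of $C$, plus itself. Hence a maximal disjoint family has size at least
\[
\frac{|C|}{171}=\frac{\Delta-p}{171}.
\]
Using $p\le(\Delta-16)/4$, we get $\Delta-p\ge\tfrac34\Delta$, so the family has size at least $\tfrac{3\Delta}{4\cdot171}$. This falls short of $\tfrac{\Delta}{100}$, since $171\cdot\tfrac43\approx 228$.

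The main obstacle is therefore sharpening this counting constant. The first thing to try is to exploit the freedom in choosing the loose neighbors. In the case $p\ge2$, $v$ typically has $p$ or more loose candidates but needs only two, so we can choose ones not yet used.

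A cleaner fix is to count degrees. A vertex $v$ is blocked only if all but fewer than $2+\deg v-\Delta$ of its loose neighbors are already used. Each used loose vertex is adjacent to at most $85$ vertices of $C$, and only $2|\WW|$ loose vertices are used. So if $|\WW|<\Delta/100$, the number of vertices of $C$ with at least one used neighbor is less than $170\Delta/100$. That alone does not contradict $|C|\ge\tfrac34\Delta$ either.

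To close the gap, we can refine using loose vertices outside $D$ when $p\ge 2$. We can prefer candidates $u$ that have many non-neighbors in $C$. Then we use the finer fact that a loose vertex is counted by at most $85$ vertices of $C$, and a blocked vertex $v$ needs almost all of its candidates used. When $p$ is large, we aim to count incidences: the number of (blocked $v$, used neighbor) pairs is at least roughly $(p-1)$ per blocked $v$ and at most $85\cdot 2|\WW|$. This gives a bound of order $|C|(p-1)/170$, which exceeds $\Delta/100$ once $p\ge 3$.

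The small cases $p\in\{1,2\}$ are where the bound $\Delta/171$-ish must be improved. For these I would check whether $85$ can be replaced by $7$ for the tight/loose split in the case $p=2$. That would give a blocking number of $15$ per benefactor, and $|C|/15\ge\Delta/100$ comfortably. For $p=1$, I would use that a vertex with exactly one outside neighbor shares it only with vertices adjacent to it, and try to bound that by the structure from Lemma~\ref{lemma__no_bad_vertex_in_D-1_clique}.
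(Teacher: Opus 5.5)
Your framework is the same as the paper's: take a maximal family of pairwise disjoint benefactors and bound how many vertices of $C$ each chosen benefactor prevents from forming a new one. Your check that every $v\in C$ has at least $2+\deg v-\Delta$ outside neighbors that are $(C,85)$-loose is correct. But the argument is not finished. Charging $2\cdot 85$ vertices of $C$ to each benefactor gives only $|C|/171<\Delta/100$, as you note, and for $p=1$ you offer no replacement.

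Your tentative fix for $p=2$ does work, and in fact for every $p\ge 2$. Since $v$ has at most one $(C,7)$-tight neighbor, at least $p+\deg v-\Delta\ge 2+\deg v-\Delta$ of its outside neighbors are $(C,7)$-loose. So you may work with a maximal family of benefactors whose outside vertices are $(C,7)$-loose. Every uncovered $v\in C$ then has a used $(C,7)$-loose neighbor, whence $|C|\le 15|\WW|$ and $|\WW|\ge\tfrac{3}{4}\Delta/15$. This makes your incidence count for $p\ge 3$ unnecessary. For $p=1$, however, there is no freedom at all: a vertex of degree $\Delta$ in $C$ has exactly two outside neighbors and must use both. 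Your remark about vertices sharing an outside neighbor gives no usable bound.

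The missing idea is to apply Lemma~\ref{lemma__no_two_vertices_can_see_together_many_vertices_in_max_clique} to the vertex $v_X\in X\cap C$ of each chosen benefactor $X$. Both vertices of $X\setminus C$ are neighbors of $v_X$, so at most one of them is $(C,7)$-tight. Write $T=C\cap\bigcup\WW$ and $S=(\bigcup\WW)\setminus C$. Then $X\setminus C$ has at most $84+6=90$ neighbors in $C\setminus T$, not $170$. Also use $|C|=\Delta-1$ rather than the weaker $|C|\ge\tfrac34\Delta$ you used. By maximality every $v\in C\setminus T$ has a neighbor in $S$, so $\Delta-1-|\WW|\le 90|\WW|$, i.e.\ $|\WW|\ge(\Delta-1)/91\ge\Delta/100$.

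The paper runs this count uniformly in $p$. Each $v\in C\setminus T$ has at least $\max(p-1,1)$ neighbors in $S$. Double counting the edges between $C\setminus T$ and $S$ then yields $\max(p-1,1)\,(\Delta-p-k)\le 90k$ with $k=|\WW|$, which implies $k\ge\Delta/100$ when $\Delta\ge 4p+16$.
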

\begin{proof}
    Let $\WW$ be an inclusionwise-maximal set of pairwise disjoint $(\Delta,85)$-benefactors for $C$.
    Let $k=|\WW|$, $S=\bigl(\bigcup \WW\bigr)\setminus C$, $T=C\cap\bigcup \WW$,
    and for each $X\in \WW$, let $b(X)$ denote the number of edges of $G$ with one endpoint in $X\setminus C$ and the other endpoint in $C\setminus T$.
    Note that $|X\setminus C|\le 2$, the vertices of $X\setminus C$ are $(C,85)$-loose, and by Lemma~\ref{lemma__no_two_vertices_can_see_together_many_vertices_in_max_clique},
    at most one of them is $(C,7)$-tight.  Moreover, each of them is adjacent to the vertex contained in $C\cap X$.  Consequently,
    $$b(X)\le 90.$$

    Let us count the number $\beta$ of edges between $C \setminus T$ and $S$ in two different ways.
    Consider a vertex $v\in C\setminus T$, and note that $v$ has $p+1+\deg v-\Delta$ neighbors outside of $C$.
    By Lemmas~\ref{lemma__no_two_vertices_can_see_together_many_vertices_in_max_clique} and \ref{lemma__no_bad_vertex_in_D-1_clique},
    at least $\max(p,2)+\deg v-\Delta$ of them are $(C,85)$-loose.
    By the maximality of $\WW$, if $\deg v=\Delta-1$, then all $(C,85)$-loose neighbors of $v$ belong to $S$,
    and if $\deg v=\Delta$, then all but at most one of them do, as otherwise $v$ together with $2+\deg v-\Delta$ of its neighbors
    would form a $(\Delta,85)$-benefactor for $C$ disjoint from those in $\WW$.  In either case, we see that $v$ has at least
    $\max(p-1,1)$ neighbors in $S$.  Therefore,
    $$\beta\ge \max(p-1,1)\cdot (|C|-|T|)=\max(p-1,1)\cdot (\Delta-p-k).$$
    On the other hand, we have
    $$\beta=\sum_{X\in \WW} b(X)\le 90k.$$
    By combining the inequalities, we get
    $$k\ge \frac{\max(p-1,1)\cdot (\Delta-p)}{\max(p-1,1)+90}.$$
    Since $\Delta\ge 4p+16$, this implies that $k\ge \tfrac{1}{100}\Delta$.
\end{proof}

% Sparse-dense decomposition
\newpage 
\section{Sparse-dense decomposition} \label{section_04}

We now construct a \emph{sparse-dense decomposition} of a minimal $\Delta$-counterexample, partitioning its vertex set into large $\Delta$-isolated near-cliques
and a set of \emph{sparse} vertices, i.e., vertices with many non-edges among its neighbors.
More precisely, for a real number $\sigma$, a vertex $v$ of a graph $G$ is \emph{$\sigma$-sparse} if $G[N(v)]$ has at least $\sigma$ non-edges,
and \emph{$\sigma$-dense} otherwise. 
\begin{definition} \label{def__sparse-dense_decomposition}
    Let $\sigma$ and $\rho$ be positive real numbers.
    A \emph{sparse-dense $(\Delta,\sigma,\rho)$-decomposition} of a graph $G$ is a set $\DD$ of pairwise vertex-disjoint
    $\Delta$-isolated near-cliques in $G$, such that
    \begin{itemize}
    \item[(a)] each near-clique $D\in\DD$ satisfies $|D|>\Delta-\rho$ and $\delta(G[D])\ge \tfrac{3}{4}\Delta$, and
    \item[(b)] all vertices in $V(G)\setminus \bigcup \DD$ are $\sigma$-sparse.
    \end{itemize}
\end{definition}

\noindent The near-cliques of the sparse-dense decomposition arise from the following lemma.

\begin{lemma} \label{lemma__union_of_two_cliques_is_near_clique}
    Let $\Delta\ge 10$ be an integer, let $r=\tfrac{3}{4}\Delta+1$, and let $(G,L,\Pi)$ be a minimal $\Delta$-counterexample.
    For distinct cliques $C, C' \in \KK_r(G)$, if $C \cap C' \neq \emptyset$, then $V(C\cup C')$ is a near-clique in $G$.
\end{lemma}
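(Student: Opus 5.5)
The plan is to show that the complement of $H=G[C\cup C']$ has no matching of size two, using K\H{o}nig's theorem together with Lemmas~\ref{lemma__P4_2K2_d1_corr_col} and~\ref{lemma__C4_d1_corr_col}. Let $K=V(C\cap C')$, $A=V(C)\setminus V(C')$ and $A'=V(C')\setminus V(C)$. Since $C$ and $C'$ are distinct inclusionwise-maximal cliques, both $A$ and $A'$ are non-empty. Every vertex of $K$ is universal in $H$. The sets $A\cup K$ and $A'\cup K$ are cliques, so every non-edge of $H$ goes between $A$ and $A'$. Thus $\overline{H}$ is a bipartite graph (plus isolated vertices) with parts $A$ and $A'$. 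By K\H{o}nig's theorem, a maximum independent set of $\overline{H}$ has size $|V(H)|-\nu$, where $\nu$ is the maximum size of a matching in $\overline{H}$. Hence $\omega(H)=|V(H)|-\nu$. It therefore suffices to show $\nu\le 1$, which gives $\omega(G[C\cup C'])\ge |C\cup C'|-1$, i.e., $V(C\cup C')$ is a near-clique.

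Suppose for a contradiction that $\overline{H}$ has a matching of size two. Because $\overline{H}$ is bipartite between $A$ and $A'$, this matching has the form $xx'$, $yy'$ with $x,y\in A$, $x',y'\in A'$, $x\nsim x'$ and $y\nsim y'$. We have $x\sim y$ and $x'\sim y'$, since these pairs lie in the cliques $C$ and $C'$ respectively. The only undetermined adjacencies are $xy'$ and $x'y$. According to whether zero, one, or two of these are edges, $G[\{x,y,x',y'\}]$ is isomorphic to $2K_2$, $P_4$, or $C_4$. All vertices of $K$ are adjacent to all of $x,y,x',y'$ and to each other. Therefore the induced subgraph $H'=G[K\cup\{x,y,x',y'\}]$ is isomorphic to $K_t\vee F$ with $t=|K|$ and $F\in\{2K_2,P_4,C_4\}$.

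By Lemma~\ref{lemma__interssize}, $t=|V(C\cap C')|\ge \tfrac12\Delta+1\ge 6$, using $\Delta\ge 10$. So Lemma~\ref{lemma__P4_2K2_d1_corr_col} (for $2K_2$ and $P_4$) or Lemma~\ref{lemma__C4_d1_corr_col} (for $C_4$) shows that $H'$ is $\minusjedna_{H'}$-correspondence colorable. This contradicts Observation~\ref{obs__forbidden_d1_corr_col_ind_subgraph}, so $\nu\le 1$.

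The only step needing care is the case analysis on the four-vertex graph: checking that two disjoint non-edges of $H$ always produce one of the three configurations $2K_2$, $P_4$, $C_4$ with a common universal clique. This relies on the bipartite structure of $\overline{H}$, which forces $x\sim y$ and $x'\sim y'$. The threshold $\Delta\ge 10$ is exactly what makes $t\ge 6\ge 5$, so the earlier lemmas apply.
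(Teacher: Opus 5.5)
Your proof is correct, and it takes a slightly different route from the paper's. The paper's proof is shorter. It notes that $H=G[V(C\cup C')]$ has at least $6$ universal vertices by Lemma~\ref{lemma__interssize}. It then applies Lemma~\ref{lemma__d1_corr_col} to $H$ itself: if $\omega(H)\le |V(H)|-2$, then $H$ would be $\minusjedna_H$-correspondence colorable, contradicting Observation~\ref{obs__forbidden_d1_corr_col_ind_subgraph}.

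In effect, you unpack that lemma for the special case of a union of two cliques, in two ways.
\begin{enumerate}
\item Lemma~\ref{lemma__d1_corr_col} locates an induced $\overline{K_3}$, $2K_2$, $P_4$ or $C_4$ via the cograph argument of Observation~\ref{obs__omegami}. You instead use that $\overline{H}$ is bipartite between $A$ and $A'$, so two disjoint non-edges directly give an induced $2K_2$, $P_4$ or $C_4$.
\item Lemma~\ref{lemma__d1_corr_col} precolors the rest of the graph greedily. You instead apply Observation~\ref{obs__forbidden_d1_corr_col_ind_subgraph} to the smaller induced subgraph $K_t\vee F$, which absorbs that step.
\end{enumerate}

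Your route has a small advantage. Since $\overline{K_3}$ cannot occur in a union of two cliques, Lemma~\ref{lemma__K3_d1_corr_col} is never needed, and it is the only one requiring $t\ge 6$. So $t\ge 5$, i.e.\ $\Delta\ge 8$, would already suffice for your argument. The paper's route reuses a general lemma that it needs elsewhere anyway.

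K\"onig's theorem is more than you need. A bipartite graph with no two disjoint edges has all its edges in a single star. Deleting the center of that star from $\overline{H}$ leaves an independent set of size $|V(H)|-1$, i.e.\ a clique of that size in $H$.
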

\begin{proof}
    By Lemma~\ref{lemma__interssize}, we have $|V(C\cap C')|\ge \tfrac{1}{2}\Delta+1\ge 6$, and thus $H=G[V(C\cup C')]$ has at least $6$ universal vertices.
    By Observation~\ref{obs__forbidden_d1_corr_col_ind_subgraph}, the induced subgraph $H$ of $G$ is not $\minusjedna_H$-correspondence colorable,
    which by Lemma~\ref{lemma__d1_corr_col} implies that $\omega(H)\ge |V(H)|-1$.  Therefore, $V(C\cup C')$ is indeed a near-clique in $G$.
\end{proof}

\noindent Next, let us rephrase Theorem~\ref{thm__many_nones} in an easier to use form.

\begin{lemma}\label{lemma__fewednei}
  Let $\omega\ge 13$ be a real number, let $\Delta\ge \omega+1$ be an integer, let $(G,L,\Pi)$ be a minimal $\Delta$-counterexample,
  let $v$ be a vertex of $G$, and let $H=G[N(v)]$.  If $\omega(H)<\omega$,
  then $H$ has more than $$\frac{(\Delta-\omega-2)(\Delta+\omega-15)}{4}$$ non-edges.
\end{lemma}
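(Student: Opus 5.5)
The plan is to apply Theorem~\ref{thm__many_nones} to $H=G[N(v)]$ with $n=\deg v$, and then simplify the resulting bound using $n\ge \Delta-1$ and $\omega(H)<\omega$.

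First we check the hypotheses of Theorem~\ref{thm__many_nones}. For every induced subgraph $B$ of $H$, the graph $B\vee v$ equals $G[V(B)\cup\{v\}]$, because $v$ is adjacent to every vertex of $N(v)$. This is an induced subgraph of $G$, so by Observation~\ref{obs__forbidden_d1_corr_col_ind_subgraph} it is not $\minusjedna$-correspondence colorable. Next, $H$ is not a clique. By Observation~\ref{obs__mindeg} we have $n=\deg v\ge \Delta-1\ge \omega$, while $\omega(H)<\omega$, so $\omega(H)<n$. Theorem~\ref{thm__many_nones} therefore gives at least
$$\frac{(n-\omega(H)-1)(n+\omega(H)-14)}{4}$$
non-edges in $H$.

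It remains to bound this expression from below. Write $w=\omega(H)$ and $f(n,w)=(n-w-1)(n+w-14)$.

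Monotonicity in $n$: $\partial f/\partial n = 2n-15$, which is positive since $n\ge \Delta-1\ge 13$. So we may replace $n$ by $\Delta-1$, giving $f\ge (\Delta-w-2)(\Delta+w-15)$.

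Monotonicity in $w$: as a function of $w$ this is a downward parabola, $g(w)=(\Delta-w-2)(\Delta+w-15)$. It is decreasing once $w\ge \tfrac{13}{2}$, so for $w$ in that range we can raise $w$ to $\omega$. The inequality is strict because $w<\omega$ and $g$ is strictly decreasing there. For $w\ge 13/2$ we then get $g(w)>g(\omega)=(\Delta-\omega-2)(\Delta+\omega-15)$, which is the claim after dividing by $4$.

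Small cliques: the main care point is $w<13/2$, where $g$ is not monotone. There I would compare $g(w)$ with $g(13-w)$, using the symmetry of the parabola about $w=13/2$. Since $13-w\le \omega$ when $\omega\ge 13$ (so the hypothesis $\omega\ge13$ enters here), and $g$ is decreasing on $[13/2,\omega]$, we get $g(w)=g(13-w)\ge g(\omega)$.

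Strictness in that case needs one more remark. Either $13-w<\omega$, and strict monotonicity gives $g(w)>g(\omega)$ directly. Or $13-w=\omega$, which forces $w=0$ and $\omega=13$; but $w\ge1$ because $H$ is nonempty, so this cannot happen.

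The positivity of the factors, for instance $\Delta-\omega-2$ possibly being $\le 0$ when $\Delta=\omega+1$, does not need separate treatment. In that case the claimed bound is $\le 0$, and the argument above still gives a strict inequality against it.
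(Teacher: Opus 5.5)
Your proposal is correct and matches the paper's proof: it applies Theorem~\ref{thm__many_nones} via Observations~\ref{obs__mindeg} and~\ref{obs__forbidden_d1_corr_col_ind_subgraph}, replaces $|V(H)|$ by $\Delta-1$, and compares values of $(\Delta-x-2)(\Delta+x-15)$ using its symmetry about $x=6.5$ together with $\omega\ge 13$. Your explicit strictness check via $\omega(H)\ge 1$ spells out a detail the paper leaves implicit.
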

\begin{proof}
    By Observation~\ref{obs__mindeg}, the graph $H$ has at least $\Delta-1$ vertices, and since $\omega(H)<\omega\le \Delta-1$,
    the graph $H$ is not a clique.
    By Observation~\ref{obs__forbidden_d1_corr_col_ind_subgraph}, for every $B\le H$, the graph $B\vee v$ (which is an induced subgraph of $G$)
    is not $\minusjedna$-correspondence colorable.
    Therefore, Theorem~\ref{thm__many_nones} implies that $H$ has at least
    \begin{align*}
    \frac{(|V(H)|-\omega(H)-1)(|V(H)|+\omega(H)-14)}{4}&\ge \frac{(\Delta-\omega(H)-2)(\Delta+\omega(H)-15)}{4}\\
    &>\frac{(\Delta-\omega-2)(\Delta+\omega-15)}{4}
    \end{align*}
    non-edges.  Note that the last inequality uses the fact that the function $f(x)=(\Delta-x-2)(\Delta+x-15)$
    is increasing for $x<6.5$, decreasing for $x> 6.5$, $f(0)=f(13)$, and $\omega\ge 13$.
\end{proof}

We are now ready to establish the existence of sparse-dense decompositions of minimal $\Delta$-counter\-examples.

\begin{theorem} \label{thm__sparse-dense_decomposition}
    Let $\Delta\ge 40$ be an integer and let $\sigma>0$ and $\rho$ be real numbers such that $\tfrac{4\sigma}{\Delta}+2\le\rho\le \tfrac{1}{4}\Delta$.
    If $(G,L,\Pi)$ is a minimal $\Delta$-counterexample, then $G$ has a sparse-dense $(\Delta,\sigma,\rho)$-decomposition.
\end{theorem}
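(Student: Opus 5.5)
The plan is to build $\DD$ from the large cliques of $G$. Set $r=\tfrac34\Delta+1$ and consider $\KK_r(G)$. By Lemma~\ref{lemma__no_three_intersecting_cliques}, each clique of $\KK_r(G)$ meets at most one other clique of $\KK_r(G)$. So the cliques of $\KK_r(G)$ split into singletons and intersecting pairs.

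For each singleton $C$ let $D_C=V(C)$. For each intersecting pair $\{C,C'\}$ let $D=V(C\cup C')$; this is a near-clique by Lemma~\ref{lemma__union_of_two_cliques_is_near_clique}. These sets are pairwise disjoint. I then keep only those with $|D|>\Delta-\rho$, and let $\DD$ be this family.

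Next I check condition (a) and $\Delta$-isolation for each $D\in\DD$. Every vertex of $D$ lies in a clique of size at least $r$ inside $D$, so $\delta(G[D])\ge r-1\ge\tfrac34\Delta$. For isolation, suppose $u\notin D$ has more than $\lceil\tfrac34\Delta\rceil$ neighbors in $D$. Since $D$ has a core $C$ of size $|D|$ or $|D|-1$, $u$ has at least $\tfrac34\Delta$ neighbors in $C$. These neighbors together with $u$ form a clique of size about $\tfrac34\Delta+1$, which extends to a maximal clique in $\KK_r(G)$ containing $u$. That clique meets the clique(s) composing $D$ but is different from them, because $u\notin D$. This contradicts Lemma~\ref{lemma__no_three_intersecting_cliques} or the maximality of the pairing. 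The rounding here needs some care, which is presumably why the threshold is $\lceil\tfrac34\Delta\rceil$ and why $\Delta\ge 40$ is assumed.

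The main step is (b): every vertex $v\notin\bigcup\DD$ must be $\sigma$-sparse. Suppose instead $v$ is $\sigma$-dense, so $H=G[N(v)]$ has fewer than $\sigma$ non-edges. I apply Lemma~\ref{lemma__fewednei} with $\omega=\Delta-\rho+1$. This requires $\omega\ge 13$, which holds since $\rho\le\Delta/4$ and $\Delta\ge 40$, and $\Delta\ge\omega+1$ follows from $\rho\ge 2$. If $\omega(H)<\omega$, then $H$ has more than $\tfrac14(\rho-3)(2\Delta-\rho-14)$ non-edges. Using $\rho\ge 4\sigma/\Delta+2$ and $\rho\le\Delta/4$, this should exceed $\sigma$, although the constants are tight. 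The hypothesis presumably gives exactly $(\rho-2)\Delta/4\ge\sigma$, so I may need to replace $\omega$ by $\Delta-\rho+2$ or use the strict inequalities to absorb the slack; this is the step I expect to be the fiddly obstacle.

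So $\omega(H)\ge\Delta-\rho+1$, and $v$ lies in a clique $K$ of size greater than $\Delta-\rho+1\ge r$. Hence $v$ belongs to some maximal clique in $\KK_r(G)$, and therefore to a near-clique $D$ constructed above with $|D|\ge|K|>\Delta-\rho$. That $D$ was kept in $\DD$, so $v\in\bigcup\DD$, a contradiction. This completes the plan.
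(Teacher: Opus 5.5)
The step you flagged as fiddly does fail as written, and both remedies you suggest make it worse. With $\omega=\Delta-\rho+1$, Lemma~\ref{lemma__fewednei} only yields more than $\tfrac14(\rho-3)(2\Delta-\rho-14)$ non-edges. The hypothesis, however, only guarantees $\sigma\le\tfrac14(\rho-2)\Delta$, and for small $\rho$ that is not enough:
\begin{itemize}
\item for $\rho\le 3$ the bound is nonpositive; for example, $\rho=\tfrac52$, $\sigma=\tfrac18\Delta$ satisfies the hypotheses;
\item for $\rho=4$, $\sigma=\tfrac12\Delta$ it gives only $\tfrac12\Delta-\tfrac92$.
\end{itemize}
Strict inequalities cannot recover this shortfall. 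Taking $\omega=\Delta-\rho+2$ replaces the factor $\rho-3$ by $\rho-4$, which is worse. So, as written, (b) is not established over the whole parameter range the theorem claims.

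The fix is to lower $\omega$ instead. Apply Lemma~\ref{lemma__fewednei} with $\omega=\Delta-\rho$; this is allowed, since $\Delta-\rho\ge\tfrac34\Delta\ge 13$ and $\rho\ge 2$. If $\omega(H)<\Delta-\rho$, then $H$ has more than $\tfrac14(\rho-2)(2\Delta-\rho-15)\ge\tfrac14(\rho-2)\Delta\ge\sigma$ non-edges, using $\rho+15\le\tfrac14\Delta+15\le\Delta$. Hence a $\sigma$-dense $v$ lies in a clique of size at least $\Delta-\rho+1$. This is all your remaining argument uses: that size is $>\Delta-\rho$, and it is $\ge r$ because $\rho\le\tfrac14\Delta$. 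You never needed a clique larger than $\Delta-\rho+1$.

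With this correction your proof is complete. It differs from the paper's only in the filtering rule. The paper keeps the $D_Q$ that contain a $\sigma$-dense vertex, then uses this same estimate (with $\omega=\Delta-\rho$) to show $|D_Q|\ge\Delta-\rho+1$. It also needs a separate application with $\omega=\tfrac34\Delta$ for vertices in no large clique. You instead keep the $D_Q$ with $|D_Q|>\Delta-\rho$. This makes the size condition in (a) automatic and lets a single application of the lemma handle every vertex outside $\bigcup\DD$. Your minimum-degree and $\Delta$-isolation arguments are fine.
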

\begin{proof}
    Let $r=\tfrac{3}{4} \Delta+1$.
    Let $I$ be the intersection graph of $\KK_r(G)$,
    i.e., the auxiliary graph with vertex set $\KK_r(G)$ and an edge between two vertices if and only if the corresponding cliques have nonzero intersection.
    For each component $Q$ of $I$, let $D_Q=\bigcup \{V(C):C\in V(Q)\}$; note that the component $Q$ has size at most two by Lemma~\ref{lemma__no_three_intersecting_cliques},
    and that $D_Q$ is a near-clique in $G$ by Lemma~\ref{lemma__union_of_two_cliques_is_near_clique}.
    Let
    $$\DD=\{D_Q:\text{$Q$ is a component of $I$ such that $D_Q$ contains a $\sigma$-dense vertex}\},$$
    and let $S=V(G)\setminus \bigcup\DD$.
    We claim that $\DD$ is a sparse-dense $(\Delta,\sigma,\rho)$-decomposition of $G$.
    We need to verify that the conditions (a) and (b) from Definition~\ref{def__sparse-dense_decomposition} hold and that the near-cliques in $\DD$ are $\Delta$-isolated.

    Consider any vertex $v\in S$.  If $v\in D_Q$ for some component $Q$ of $I$, then the vertex $v$ is $\sigma$-sparse by the choice of $\DD$.
    Otherwise, note that the graph $H=G[N(v)]$ satisfies $\omega(H)<r-1=\tfrac{3}{4}\Delta$, since $v$ is not contained in any (maximal) clique
    of size at least $r$.  Therefore, Lemma~\ref{lemma__fewednei} implies that $H$ has more than
    $$\frac{(\Delta/4-2)(7\Delta/4-15)}{4}=\frac{7\Delta^2-116\Delta+480}{64}>\frac{\Delta^2}{16}>\sigma$$
    non-edges, where the inequalities hold since $\Delta\ge 40$ and $\tfrac{4\sigma}{\Delta}+2\le \tfrac{1}{4}\Delta$. 
    It follows that $v$ is $\sigma$-sparse, and thus $S$ consists only of $\sigma$-sparse vertices.  Therefore, (b) holds.

    Consider now any near-clique $D\in \DD$.  Note that $G[D]$ is the union of at most two cliques from $\KK_r(G)$, and thus
    $\delta(G[D])\ge r-1=\tfrac{3}{4}\Delta$.  Moreover, by the choice of $\DD$, there exists a $\sigma$-dense vertex
    $v\in D$.  We claim that the graph $H=G[N(v)]$ satisfies $\omega(H)\ge \Delta-\rho$.
    Indeed, otherwise Lemma~\ref{lemma__fewednei} with $\omega=\Delta-\rho$ would imply that $H$ has more than
    $$\frac{(\rho-2)(2\Delta-\rho-15)}{4}\ge \frac{(\rho-2)\Delta}{4}\ge \sigma$$
    non-edges (the first inequality uses that $\rho+15\le \tfrac{1}{4}\Delta+15\le \Delta$), and thus $v$ would be $\sigma$-sparse.  It follows that $v$ is contained in a maximal clique $C$ of size at least
    $\Delta-\rho+1\ge r$.  This clique belongs to $\KK_r(G)$ and intersects $D$, and thus $V(C)\subseteq D$.  Therefore, $|D|\ge \Delta-\rho+1$.
    It follows that (a) holds.

    Finally, let us argue that each near-clique $D\in\DD$ in $\Delta$-isolated.
    Consider any vertex $z\in V(G)\setminus D$, and let $F=G[N(z)\cap D]$.  If $\omega(F)\ge \tfrac{3}{4}\Delta$, then
    $z$ would be contained in a maximal clique of size at least $\omega(F)+1\ge r$ intersecting $D$.  However, by the construction,
    the vertices of all such cliques are contained in $D$, contradicting $z\not\in D$.  Therefore, we have $\omega(F)<\tfrac{3}{4}\Delta$,
    and since $\omega(F)$ is an integer, it follows that $\omega(F)\le \lceil \tfrac{3}{4}\Delta\rceil-1$.
    On the other hand, the set $V(F)$ is a subset of the near-clique~$D$, and thus $\omega(F)\ge |V(F)|-1$.
    It follows that $z$ has $|V(F)|\le \omega(F)+1\le \lceil \tfrac{3}{4}\Delta\rceil$ neighbors in $D$.
    Consequently, the near-clique $D$ is $\Delta$-isolated.
\end{proof}

% Probability part
\newpage

\section{Probabilistic analysis} \label{section_05} % TO DO (+): Consider to change the title.

Let us now finish the proof of Theorem~\ref{thm__bk_conjecture_is_true_asym_for_DP_coloring} by a probabilistic argument.
Let us start by summarizing the results we need from the previous sections.
For a minimal $\Delta$-counterexample $(G,L,\Pi)$, a \emph{setting} is a triple $(\DD,\CC,\{\WW_C\}_{C\in \CC})$, where
\begin{itemize}
\item $\DD$ is a sparse-dense $\bigl(\Delta,\tfrac{1}{120}\Delta^2,\tfrac{1}{28}\Delta\bigr)$-decomposition of $G$,
\item $\CC=\{C_D:D\in\DD\}$, where for each $D\in \DD$, the set $C_D$ is a core of the near-clique $D$, and
\item for each clique $C\in \CC$, $\WW_C$ is a set of exactly $\lceil\tfrac{1}{100}\Delta\rceil$ pairwise disjoint $(\Delta,85)$-benefactors for $C$.
\end{itemize}

\noindent Theorems~\ref{thm__sparse-dense_decomposition} (with $\sigma=\tfrac{1}{120}\Delta^2$ and $\rho=\tfrac{1}{28}\Delta$) and \ref{thm__disjoint_triples} (with $p<\rho$) have the following consequence.
\begin{corollary}\label{cor__summary}
Let $\Delta\ge 40$ be an integer.  For every minimal $\Delta$-counterexample $(G,L,\Pi)$, there exists a setting.
\end{corollary}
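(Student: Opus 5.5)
We obtain the setting by applying Theorem~\ref{thm__sparse-dense_decomposition} and Theorem~\ref{thm__disjoint_triples} in turn, checking only that their numerical hypotheses hold. Let $\sigma=\tfrac{1}{120}\Delta^2$ and $\rho=\tfrac{1}{28}\Delta$.

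\textbf{Step 1: the decomposition.} We first check the hypothesis $\tfrac{4\sigma}{\Delta}+2\le\rho\le\tfrac14\Delta$ of Theorem~\ref{thm__sparse-dense_decomposition}. The upper bound $\rho=\tfrac{1}{28}\Delta\le\tfrac14\Delta$ is immediate. For the lower bound we have $\tfrac{4\sigma}{\Delta}=\tfrac{1}{30}\Delta$, so we need $\tfrac{1}{30}\Delta+2\le\tfrac{1}{28}\Delta$. This is equivalent to $\Delta\ge 840$. The theorem therefore yields a sparse-dense $(\Delta,\sigma,\rho)$-decomposition $\DD$ once $\Delta\ge 840$.

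If the corollary is genuinely meant for all $\Delta\ge 40$, this step does not go through directly for $40\le\Delta<840$. In that range I would note that $\Delta\ge\thebound$ whenever the corollary is actually used, so it suffices to state it for $\Delta\ge 840$, or else to adjust the constants. I expect this reconciliation of constants to be the main (bookkeeping) obstacle. The remaining steps are routine.

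\textbf{Step 2: the cores.} For each $D\in\DD$ we fix a core $C_D$; one exists because $D$ is a near-clique. Condition (a) of the decomposition gives $|C_D|\ge|D|-1>\Delta-\rho-1$. Setting $p=\Delta-|C_D|$, we get $p<\rho+1$. Since $p$ is an integer, this gives $p\le\lceil\rho\rceil$, and in any case $p\le\tfrac{1}{28}\Delta+1$.

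\textbf{Step 3: the benefactors.} To apply Theorem~\ref{thm__disjoint_triples} to $D$ and $C_D$, we need $\Delta\ge 4p+16$. It suffices that $\Delta\ge\tfrac17\Delta+20$, which holds for $\Delta\ge 24$. The theorem then supplies at least $\tfrac{1}{100}\Delta$ pairwise disjoint $(\Delta,85)$-benefactors for $C_D$. Since their number is an integer, it is at least $\lceil\tfrac1{100}\Delta\rceil$, and we let $\WW_{C_D}$ be any subfamily of exactly $\lceil\tfrac1{100}\Delta\rceil$ of them.

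Together, $\DD$, $\CC=\{C_D:D\in\DD\}$ and the families $\WW_{C_D}$ form a setting.
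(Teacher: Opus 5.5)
Your proposal is the paper's argument: the paper derives the corollary in one line by citing Theorem~\ref{thm__sparse-dense_decomposition} with $\sigma=\tfrac{1}{120}\Delta^2$ and $\rho=\tfrac{1}{28}\Delta$, and then Theorem~\ref{thm__disjoint_triples} for each core. The paper uses $p<\rho$, which comes from the proof of the decomposition theorem; your $p<\rho+1$, obtained from condition (a) alone, works equally well. Your remark on the constants is also correct: the hypothesis $\tfrac{4\sigma}{\Delta}+2\le\rho$ holds only for $\Delta\ge 840$, so the paper's derivation does not cover $40\le\Delta<840$ either. This is harmless, because the corollary is only used for $\Delta\ge 3\cdot 10^9$.
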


\subsection{Transversal of the setting}

For a fixed sparse-dense decomposition, we first pick a subset of vertices
which (among other related properties) intersects every core and the
neighborhood of every sparse vertex in roughly $\tfrac{1}{5}\Delta$ vertices.  A
random subset with each vertex chosen independently with probability around $1/5$
has this property almost surely;
this is sufficient, by the use of the Lovász Local Lemma.

More precisely, let $\Delta$ be a positive integer and let $(\DD,\CC,\{\WW_C\}_{C\in \CC})$ be a setting for a minimal $\Delta$-counterexample $(G,L,\Pi)$.
For a core $C\in \CC$ and a set $X\in \WW_C$, we say that a set $S\subseteq V(G)$
\emph{$C$-bisects} $X$ if $X\cap S=X\setminus C$.
We say that the set $S$ is a \emph{transversal} for the setting if it satisfies the following conditions.
\begin{itemize}
\item[(i)] Each vertex $v\in V(G)$ has at most $\tfrac{1}{5}\Delta$ neighbors in $S$.
\item[(ii)] For each vertex $v\in V(G)\setminus\bigcup \DD$, at least $\tfrac{1}{4800}\Delta^2$ of the non-edges between the neigbors of $v$
have both endpoints in $S$.
\item[(iii)] For each $C\in \CC$, $|C\cap S|\ge \tfrac{1}{6}\Delta$.
\item[(iv)] For each $C\in \CC$, the set $S$ $C$-bisects at least $\tfrac{\Delta}{10^4}$ elements of $\WW_C$.
\end{itemize}

\begin{lemma}\label{lemma__extransvers}
Let $\Delta$ be a positive integer and let $(\DD,\CC,\{\WW_C\}_{C\in \CC})$ be a setting for a minimal $\Delta$-counterexample $(G,L,\Pi)$.
If $\Delta\ge 10^8$, then there exists a transversal for the setting.
\end{lemma}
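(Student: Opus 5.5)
Choose $S$ by including each vertex of $G$ independently with probability $q$, slightly below $\tfrac15$; I would try $q=\tfrac{9}{50}$, with $q=0.19$ as an alternative if the constants are tight. For each condition and each relevant vertex or core, define a bad event, bound its probability by Theorem~\ref{tool__chernoff} (or Theorem~\ref{tool__mcdiarmid} for (ii)), and combine via Theorem~\ref{tool__lovasz_local_lemma}. With independent vertex choices, the lopsided condition holds whenever $\EE_A$ contains every event whose determining vertex set meets that of $A$.

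The bad events are as follows.
\begin{itemize}
\item $A_v$ for (i): $|N(v)\cap S|>\Delta/5$. Here $\ex{|N(v)\cap S|}\le q\Delta$, and Chernoff with $t=(1/5-q)\Delta$ gives probability $\exp(-\Omega(\Delta))$. This event is determined by $N(v)$.
\item $B_C$ for (iii): $|C\cap S|<\Delta/6$. Since $|C|\ge\Delta-\rho-1\ge\tfrac{26}{28}\Delta$, we get $\ex{|C\cap S|}\ge q\cdot\tfrac{26}{28}\Delta>\Delta/6$ for $q=\tfrac{9}{50}$. Chernoff again gives $\exp(-\Omega(\Delta))$. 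This event is determined by $C$.
\item $W_C$ for (iv): each $X\in\WW_C$ is $C$-bisected independently with probability $(1-q)q^{|X\setminus C|}\ge (1-q)q^2\approx0.027$, since the $X$ are disjoint. The expected number of bisected benefactors is therefore at least $0.027\cdot\Delta/100>2\Delta/10^4$, and Chernoff gives $\exp(-\Omega(\Delta))$. This event is determined by $\bigcup\WW_C$, which has size $O(\Delta)$.
\item $E_v$ for (ii), for a sparse vertex $v$: $v$ has at least $\Delta^2/120$ non-edges among its at most $\Delta$ neighbors, and each survives with probability $q^2$. The expected number of surviving non-edges is $\ge q^2\Delta^2/120\ge 0.0324\Delta^2/120>\Delta^2/4800\cdot 1.29$.
\end{itemize}

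For $E_v$ the count is not a sum of independent indicators, so I apply Theorem~\ref{tool__mcdiarmid} with trials indexed by $u\in N(v)$. Let $d_u\le\Delta$ be the number of non-edges at $u$ inside $N(v)$; then $c_u=d_u$ and $\sum c_u^2\le\Delta\sum d_u\le\Delta\cdot 2\binom{\Delta}{2}\le\Delta^3$. With $t=\Theta(\Delta^2)$ this gives probability $\exp(-\Omega(\Delta^4/\Delta^3))=\exp(-\Omega(\Delta))$. A sharper alternative, if needed, is to bound $\sum c_u^2\le \Delta\cdot 2m$ in terms of the actual number $m$ of non-edges.

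Each event is determined by a set of $O(\Delta)$ vertices. Each vertex lies in $N(w)$ for at most $\Delta$ vertices $w$, and in at most one core or benefactor family, because the near-cliques are disjoint and each benefactor vertex outside $C$ is a neighbor of $C$. I expect the benefactor point to be the main bookkeeping obstacle: a vertex may lie in $\bigcup\WW_C$ for many cores $C$. It is adjacent to a vertex of each such $C$, so there are at most $\Delta$ of them. Hence each event shares vertices with at most $d=O(\Delta^2)$ others, say $d\le 10\Delta^2$.

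It then suffices that each probability is at most $1/(40\Delta^2)$, which holds for $\Delta\ge10^8$ once the explicit Chernoff exponents are written down. The weakest exponent is in (iv): there the mean is about $2.7\cdot10^{-4}\Delta$ and the deviation about half of it, so the exponent is about $c\Delta$ with $c\sim10^{-5}$. That exceeds $\ln(40\Delta^2)\approx 40$ when $\Delta\ge10^8$, and this is presumably where the bound $10^8$ comes from. Theorem~\ref{tool__lovasz_local_lemma} then yields an $S$ avoiding all bad events, which is a transversal.
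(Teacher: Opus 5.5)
Your plan is the paper's proof: sample each vertex independently with probability slightly below $\tfrac15$ (the paper uses $\tfrac{11}{60}$), bound the bad events for (i), (iii), (iv) by Chernoff and the one for (ii) by McDiarmid, and apply the Lovász Local Lemma with $|\EE_A|=O(\Delta^2)$ determined by overlapping vertex sets. Your dependency count (at most $\Delta+1$ cores $C$ with a given vertex in $C\cup\bigcup\WW_C$) is correct.

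One correction on the constants is needed. The estimate $\sum_u c_u^2\le\Delta^3$ that you list first does not work at $\Delta=10^8$. With $m=\tfrac{1}{120}\Delta^2$ and $q=\tfrac{9}{50}$ it gives an exponent of only about $7.6\cdot10^{-9}\Delta\approx 0.76$, and no $q<\tfrac15$ brings it near $\ln(40\Delta^2)\approx 40.5$. So your ``sharper alternative'' $\sum_u c_u^2\le 2m\Delta$ is required, not optional; it is exactly what the paper uses. With it, the exponent is $(q^2-\tfrac1{40})^2m/\Delta\ge 4.5\cdot10^{-7}\Delta$, about $45.6$ at $\Delta=10^8$, which is enough but not by much.

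Consequently, (ii) is the bottleneck that dictates $\Delta\ge10^8$, not (iv). The exponent in (iv) is about $5\cdot10^{-5}\Delta$, so it has enormous slack. Also, with $q=\tfrac{9}{50}$ and your rounding $|C|\ge\tfrac{26}{28}\Delta$, condition (iii) has margin only $\Delta/2100$ and exponent about $68$ at $\Delta=10^8$. This still suffices, but the $\tfrac{26}{28}$ rounding is needlessly lossy, since you already have $|C|>\tfrac{27}{28}\Delta-1$.
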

\begin{proof}
Let $S$ be the set of vertices that selects each vertex of $G$ independently at random with probability $\tfrac{11}{60}=\tfrac{1}{5}-\tfrac{1}{60}$.
\begin{itemize}
\item[(i)] For each vertex $v\in V(G)$, let $S(v)$ be the number of neighbors of $v$ in $S$ and let $A_{v,\mathrm{i}}$ be the event that $S(v)>\tfrac{1}{5}\Delta$.
Note that $S(v)$ is a sum of independent Bernoulli variables and $\ex{S(v)}=\tfrac{11}{60}\deg v\le \tfrac{11}{60}\Delta$, and
thus a Chernoff Bound (Theorem~\ref{tool__chernoff} with $\lambda=\tfrac{11}{60}\Delta$ and $t=\tfrac{1}{60}\Delta$) implies
$$\Pr{A_{v,\mathrm{i}}}\le e^{-\frac{1}{1380}\Delta}\le \frac{1}{16\Delta^2}.$$
Let us also define $g(A_{v,\mathrm{i}})=N(v)$, denoting the set of vertices whose outcomes determine $A_{v,\mathrm{i}}$.
\item[(ii)] For each vertex $v\in V(G)\setminus\bigcup \DD$, let $S'(v)$ be the number of non-edges between the neighbors of $v$ in $S$
and let $A_{v,\mathrm{ii}}$ be the event that $S'(v)<\tfrac{1}{4800}\Delta^2$.  Let $M$ be the set of all non-edges between the neighbors of $v$ in $G$ and let $m=|M|$. We have $m\ge \tfrac{1}{120}\Delta^2$,
and thus $\Pr{A_{v,\mathrm{ii}}}\le\Pr{S'(v)<\tfrac{1}{40}m}$.  Moreover, $\ex{S'(v)}=(\tfrac{11}{60})^2m=\tfrac{1}{40}m+\tfrac{31}{3600}m$.  Let $u_1$, \ldots, $u_{\deg v}$ be the neighbors of $v$
and for $i\in[\deg v]$, let $m_i$ denote the number of non-edges in $M$ incident with $u_i$.
Note that
$$\sum_{i=1}^{\deg v} m_i^2\le\Delta\sum_{i=1}^{\deg v} m_i=2m\Delta.$$
The random variable $S'(v)$ is determined by the independent indicator random variables for the events $u_i\in S$, and for each $i\in[\deg v]$, changing whether $u_i$ belongs to $S$ changes the value of $S'(v)$
at most by $m_i$.  By McDiarmid's Inequality (Theorem~\ref{tool__mcdiarmid}),
\begin{align*}
\Pr{A_{v,\mathrm{ii}}}&\le \Pr{S'(v)<\ex{S'(v)}-\tfrac{31}{3600}m}\le \exp\Bigl(-\frac{961 m^2}{648\cdot 10^4\cdot 2m\Delta}\Bigr)\\
&\le \exp\Bigl(-\frac{961}{15552\cdot 10^5}\Delta\Bigr)\le \frac{1}{16\Delta^2}.
\end{align*}
Let us also define $g(A_{v,\mathrm{ii}})=N(v)$.
\item[(iii)] For each $C\in \CC$, let $A_{C,\mathrm{iii}}$ be the event that $|C\cap S|<\tfrac{1}{6}\Delta$.
$$\ex{|C\cap S|}=\tfrac{11}{60}|C|\ge \tfrac{11}{60}\bigl(\Delta-\tfrac{1}{28}\Delta\bigr)=\tfrac{99}{560}\Delta=\tfrac{1}{6}\Delta+\tfrac{17}{1680}\Delta.$$
A Chernoff Bound (Theorem~\ref{tool__chernoff} with $\eta=\tfrac{99}{560}\Delta$ and $t=\tfrac{17}{1680}\Delta$) gives
$$\Pr{A_{C,\mathrm{iii}}}\le e^{-\frac{289}{997920}\Delta}\le \frac{1}{16\Delta^2}.$$
Let us also define $g(A_{C,\mathrm{iii}})=C$.
\item[(iv)] For each $C\in \CC$, let $S(C)$ be the number of elements of $\WW_C$ $C$-bisected by $S$, and let $A_{C,\mathrm{iv}}$ be the event that $S(C)<\tfrac{\Delta}{10^4}$.
Let $w_C=|\WW_C|= \lceil \tfrac{1}{100}\Delta\rceil$; we have $\Pr{A_{C,\mathrm{iv}}}\le \Pr{S(C)<\tfrac{1}{100}w_C}$.
Note that
$$\ex{S(C)}\ge \frac{49}{60}\cdot \Bigl(\frac{11}{60}\Bigr)^2\cdot w_C=\frac{5929}{216\cdot 10^3}w_C=\frac{1}{100}w_C+\frac{3769}{216\cdot 10^3}w_C.$$
Moreover, for distinct elements of $\WW_C$, the events whether they are $C$-bisected by $S$ are independent.
Hence, a Chernoff Bound (Theorem~\ref{tool__chernoff} with $\eta=\frac{5929}{216\cdot 10^3}w_C$ and $t=\frac{3769}{216\cdot 10^3}w_C$)
gives
$$\Pr{A_{C,\mathrm{iv}}}\le e^{-\frac{14205361}{2561328\cdot 10^3}w_C}\le e^{-\frac{14205361}{2561328\cdot 10^5}\Delta}\le \frac{1}{16\Delta^2}.$$
Let us also define $g(A_{C,\mathrm{iv}})=\bigcup \WW_C$.
\end{itemize}
Let $\EE$ be the set of the events discussed in the four cases. For each $A\in \EE$, let $\EE_A$ be the set of events $A'\in \EE$ such that $g(A)\cap g(A')\neq\emptyset$.
Note that $|\EE_A|\le 4\Delta^2$.  Indeed, we have
$$|g(A)|\le \max\bigl(\{\deg v:v\in V(G)\}\cup \{|C|:C\in \CC\}\cup \{3w_C:C\in\CC\}\bigr)\le \Delta,$$
and $\EE_A$ contains
\begin{itemize}
\item events $A_{v,\mathrm{i}}$ for at most $|g(A)|\cdot \Delta\le \Delta^2$ vertices $v$ with a neighbor in $g(A)$,
\item events $A_{v,\mathrm{ii}}$ for at most $|g(A)|\cdot \Delta\le \Delta^2$ vertices $v$ with a neighbor in $g(A)$,
\item events $A_{C,\mathrm{iii}}$ for at most $|g(A)|\le \Delta$ cores $C\in \CC$ such that a vertex of $g(A)$ belongs to $C$, and
\item events $A_{C,\mathrm{iv}}$ for at most $|g(A)|+|g(A)|\cdot \Delta\le \Delta+\Delta^2$ cores $C\in\CC$ such that
either a vertex of $g(A)$ or a neighbor of a vertex of $g(A)$ belongs to $C$.
\end{itemize}

Observe that $A$ is mutually independent of $\EE\setminus\EE_A$, and thus for every set $\EE'\subseteq \EE\setminus \EE_A$,
$$\Pr{A\Bigm|\bigwedge_{A'\in \EE'} \lnot A'}=\Pr{A}\le \frac{1}{16\Delta^2}.$$
By  the Lovász Local Lemma (Theorem~\ref{tool__lovasz_local_lemma}), there exists a choice for the set $S$ such that none of the events in $\EE$ occur, and thus $S$ is a transversal for the setting.
\end{proof}

\subsection{Uniform coloring of the transversal}

Let $\Delta$ be a positive integer and let $G$ be a graph of maximum degree at most $\Delta$.
Let $(L,\Pi)$ be a correspondence assignment for $G$, let $S$ be a set of vertices of $G$,
and let $\psi$ be an $(L,\Pi)$-coloring of~$G[S]$.
Let $v$ be a vertex of $G$ not belonging to $S$ and let $M$ be a set of non-edges of $G$ between the vertices of $N(v)\cap S$.
A non-edge $xy\in M$ is \emph{$(v,\psi)$-monochromatic} if $\psi(x)$ and $\psi(y)$ together block at most one color at $v$.
Our first goal will be to show that, under a uniformly random choice of $\psi$ subject to a few additional assumptions, there is a good chance that
there are at least $2+\deg v-\Delta$ $(v,\psi)$-monochromatic non-edges in $M$ (if that is the case, we say that $\psi$ \emph{$(\Delta,M)$-saves $v$}).
Assuming that $|L(v)|=\Delta-1$, this ensures that $v$ has a positive $\psi$-excess and $\psi$ (or even any extension of $\psi$) can be extended to $v$ greedily.

Our plan for proving this is to perform resampling (see Lemma~\ref{tool__resampling}) of $\psi$ at each non-edge $m\in M$, arguing that we are somewhat
likely to make $m$ $(v,\psi)$-monochromatic.  Of course, we would like to avoid turning previously processed $(v,\psi)$-monochromatic non-edges back into non-$(v,\psi)$-monochromatic ones
by the resampling.  Thus, we want to ignore $m$ when it shares an endpoint with such a $(v,\psi)$-monochromatic non-edge.  This brings a concern that
we could be forced to ignore too many edges of $M$, motivating the following property:  For a positive real number $\beta$, we say that the set $M$
is \emph{$(v,\Delta,\beta)$-rich} if for every set $M'\subseteq M$ of size less than $2+\deg v-\Delta$, there are at least $\beta$ non-edges in $M$ that do
not share an endpoint with any non-edge in $M'$.

As a further complication, we need to show that the claim holds somewhat independently of what happens in the rest of the graph,
so that we can apply the lopsided Lovász Local Lemma.  For this, we need to estimate the conditional probability that $\psi$ $(\Delta,M)$-saves $v$
given that the colors of non-neighbors of $v$ are fixed somehow.  More precisely, for a positive real number $q$, we say that \emph{the uniform $(L,\Pi)$-coloring of $G[S]$ $(\Delta,M)$-fails
$v$ with probability at most $q$} if for every partial $(L,\Pi)$-coloring $\sigma$ of $G[S]$ such that $N(v)\cap \dom(\sigma)=\emptyset$,
$$\Pr{\text{$\psi$ does not $(\Delta,M)$-save $v$}\mid\text{$\psi$ extends $\sigma$}}\le q.$$

\begin{lemma}\label{lemma__can_free_vertex}
Let $\Delta$ be a positive integer, let $\beta\ge 1$, $\gamma>0$, and $\delta>0$ be real numbers
and
$$q=(1+\beta p)e^{-(\beta-1)p}\le e(1+\beta p)e^{-\beta p},$$
where $p=\tfrac{\delta}{\Delta^2}$.
Let $G$ be a graph of maximum degree at most $\Delta$ and $(L,\Pi)$ a correspondence assignment for $G$ such that $|L(u)|\le \Delta$ holds for every $u\in V(G)$.
Let $S$ be a set of vertices of $G$, $v$ a vertex of $G$ not belonging to $S$, and $M$ a set of non-edges of $G$ between vertices of $N(v)\cap S$.
If
\begin{itemize}
\item[(a)] every vertex of $G$ has at most $\gamma$ neighbors in $S$,
\item[(b)] the set $M$ is $(v,\Delta,\beta)$-rich, and
\item[(c)] $|L(u)|\ge \tfrac{\Delta+4\gamma+\delta-4}{2}$ holds for every $u\in N(v)\cap S$,
\end{itemize}
then the uniform $(L,\Pi)$-coloring of $G[S]$ $(\Delta,M)$-fails $v$ with probability at most $q$.
\end{lemma}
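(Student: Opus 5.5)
Let $k=2+\deg v-\Delta\in\{0,1,2\}$; if $k\le 0$ there is nothing to prove, so assume $k\in\{1,2\}$. Fix a partial coloring $\sigma$ of $G[S]$ with $N(v)\cap\dom(\sigma)=\emptyset$. Conditioning a uniform $(L,\Pi)$-coloring of $G[S]$ on extending $\sigma$ yields a uniform $(L_\sigma,\Pi_\sigma)$-coloring of $G[S]-\dom(\sigma)$. The lists of vertices in $N(v)\cap S$ are unaffected, because $\dom(\sigma)$ avoids $N(v)$ but may contain their neighbours; we only use that $|L_\sigma(u)|\ge |L(u)|-\gamma$ for such $u$, by (a).

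The plan is a sequential resampling process in the spirit of Lemma~\ref{tool__resampling}, applied to this conditioned uniform coloring. We process the non-edges of $M$ in a fixed order. We maintain a set $M'$ of non-edges that are already $(v,\psi)$-monochromatic, with $|M'|<k$. At each step we pick the next non-edge $xy\in M$ that shares no endpoint with $M'$; richness (b) guarantees at least $\beta$ such steps while $|M'|<k$.

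At such a step we apply an $f$-invariant resampling with $f(\psi)=\{x,y\}$. Since $f$ is constant, invariance is automatic, and $\psi$ stays uniform by Lemma~\ref{tool__resampling}. Since $x\nsim y$, the new colors $(\psi(x),\psi(y))$ are uniform over $L'(x)\times L'(y)$. Here $L'$ denotes the lists after removing colors blocked by the currently colored neighbours of $x,y$, other than $x,y$ themselves and vertices outside $S$. By (a) and (c),
$$|L'(x)|,|L'(y)|\ge \tfrac{\Delta+4\gamma+\delta-4}{2}-\gamma-\gamma' ,$$
where $\gamma'$ accounts for the current neighbours; roughly, $|L'(x)|,|L'(y)|\ge \tfrac{\Delta+\delta}{2}$ up to lower-order terms.

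We count pairs making $xy$ monochromatic. Let $a$ (resp.\ $b$) be the number of colors of $L'(x)$ (resp.\ $L'(y)$) blocking a color at $v$. Colors blocking nothing at $v$ give at least $(|L'(x)|-a)|L'(y)|$ good pairs, and similarly for $y$. Colors blocking the same color of $L(v)$ give at least $a+b-|L(v)|$ good pairs, by pigeonhole, since $|L(v)|\le\Delta$. A short optimization should show that the fraction of good pairs is at least
$$\frac{|L'(x)|+|L'(y)|-\Delta}{|L'(x)||L'(y)|}\ge\frac{\delta}{\Delta^2}=p,$$
which is where the constant $\tfrac{\Delta+4\gamma+\delta-4}{2}$ in (c) is calibrated. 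I expect this bookkeeping, namely that the list-size lower bound survives removal of colors blocked by up to $2\gamma$-ish neighbours of $x,y$ in $S$, to be the main place requiring care. Pairs that are monochromatic but conflict with nothing else are fine: $\psi$ remains a proper coloring because we resample only within legal colorings.

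Resampling $xy$ cannot destroy monochromaticity of a non-edge in $M'$, since they are disjoint. Each step thus succeeds with probability at least $p$, regardless of the history, and the successes stochastically dominate independent Bernoulli$(p)$ trials. Failure means fewer than $k\le2$ successes among at least $\beta$ trials. The attempts after one success are still at least $\beta$, by richness with $|M'|=1$, so the failure probability is at most
$$\Pr{\mathrm{Bin}(\lceil\beta\rceil,p)\le 1}\le (1-p)^{\beta}+\beta p(1-p)^{\beta-1}\le(1+\beta p)e^{-(\beta-1)p}=q.$$
Since the final coloring is uniform given $\sigma$, this gives the claim.
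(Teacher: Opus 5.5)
Your per-step estimate and the final binomial computation are fine. The step ``since $f$ is constant, invariance is automatic, and $\psi$ stays uniform'' is wrong, though, and it is the crux of the lemma.

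Which non-edge you resample depends on $M'$, hence on the current coloring and the history, so your subset selector is not constant. Lemma~\ref{tool__resampling} preserves uniformity only for $f$-invariant resamplings: $f$ must be a function of the current coloring, and the resampling must not change its value. Resampling $x,y$ from all of $L'(x)\times L'(y)$ violates this. A new color at $x$ can make an earlier-processed non-edge $zx$ monochromatic, which changes which non-edges are eligible.

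This genuinely biases the output. Suppose $m_2$ shares an endpoint with $m_1$ and is resampled only when $m_1$ failed. Conditioned on that failure, the coloring is uniform on colorings in which $m_1$ is not monochromatic. Resampling the endpoints of $m_2$ then moves mass back onto colorings in which $m_1$ is monochromatic, so the overall mixture overweights them. You therefore bound the failure probability of some non-uniform coloring. The lemma, and its later use in the lopsided Local Lemma, needs the bound for the uniform coloring conditioned on extending $\sigma$.

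The missing idea is to make the resampling genuinely invariant, which is what the paper does.
\begin{itemize}
\item Straighten the edges at $v$ so that all lists in $N(v)\cap S$ lie in a common $\Gamma\supseteq L(v)$ with $|\Gamma|=\Delta$. Call a non-edge monochromatic when its endpoints receive the same color, which is stronger than $(v,\psi)$-monochromatic.
\item Fix the order $m_1,\dots,m_c$. Let $f_i(\varphi)=\emptyset$ if $x_i$ or $y_i$ is incident with a $\varphi$-monochromatic non-edge of $M_{i-1}=\{m_1,\dots,m_{i-1}\}$, and $f_i(\varphi)=\{x_i,y_i\}$ otherwise.
\item When $f_i(\varphi)\neq\emptyset$, the $f_i$-invariant modifications are exactly the recolorings of $x_i,y_i$ that avoid $X_i=\{\varphi(z):zx_i\in M_{i-1}\}$ at $x_i$ and the analogous set $Y_i$ at $y_i$.
\item By (a), $|X_i|,|Y_i|\le\gamma-2$. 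Add the at most $\gamma$ colors blocked at each endpoint by its neighbors in $S$, and you lose at most $4\gamma-4$ from $|L(x_i)\cap L(y_i)|\ge|L(x_i)|+|L(y_i)|-\Delta$.
\item Then (c) gives $|L_i(x_i)\cap L_i(y_i)|\ge\delta$, so each step succeeds with probability at least $p$.
\end{itemize}
This is what the $4\gamma$ in (c) is calibrated for; your bookkeeping used only about $2\gamma$ of it, which signals the missing restriction. The stronger same-color notion also matters. Under $(v,\psi)$-monochromaticity, invariance would force you to exclude at $x_i$ every color that blocks nothing at $v$, and there need not be few of those.
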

\begin{proof}
We can assume that $\deg v\in\{\Delta-1,\Delta\}$, as otherwise every $(L,\Pi)$-coloring of $G[S]$ trivially $(\Delta,M)$-saves $v$.
Let $M=\{m_1, \ldots, m_c\}$ and for each $i\in [c]$, let $m_i=x_iy_i$ and $M_i=\{m_1,\ldots,m_i\}$.  Also, let $M_0=\emptyset$.

By Observation~\ref{obs__straightening}, we can assume that all edges of $G$ incident with $v$
are straight and that the lists of all vertices of $N(v)\cap S$ are subsets of the same set $\Gamma\supseteq L(v)$ of size $\Delta$.
For a partial $(L,\Pi)$-coloring $\varphi$ of $G[S]$ with $N(v)\cap S\subseteq \dom(\varphi)$, we say that a non-edge $m=xy\in M$ is \emph{$\varphi$-monochromatic} if $\varphi(x)=\varphi(y)$;
clearly, a $\varphi$-monochromatic non-edge is also $(v,\varphi)$-monochromatic.
For $i\in\{0,1,\ldots,c\}$, we define $\mu_i(\varphi)$ as the set of $\varphi$-monochromatic non-edges in $M_i$.
For $i\in [c]$, let $f_i$ be the subset selector defined as follows:
If $x_i$ or $y_i$ is incident with a $\varphi$-monochromatic non-edge belonging to $M_{i-1}$,
then $f_i(\varphi)=\emptyset$, otherwise $f_i(\varphi)=\{x_i,y_i\}$.

Let us fix any partial $(L,\Pi)$-coloring $\sigma$ of $G[S]$ such that $N(v)\cap \dom(\sigma)=\emptyset$.
Let $S'=S\setminus\dom(\sigma)$ and let $\psi'$ be a uniformly random $(L^\sigma,\Pi^\sigma)$-coloring of $G[S']$;
by Observation~\ref{obs__combine}, $\sigma\cup\psi'$ is a uniformly random $(L,\Pi)$-coloring of $G[S]$ that extends $\sigma$.
Therefore, to obtain the conclusion of the lemma, we need to show that
\begin{equation}\label{eq__toprove}
\Pr{\text{$\psi'$ does not $(\Delta,M)$-save $v$}}\le q.
\end{equation}
By Lemma~\ref{tool__resampling}, we can obtain the uniformly random $(L^\sigma,\Pi^\sigma)$-coloring $\psi'$ of $G[S']$ by the following process:
Let $\psi_0$ be a uniformly random $(L^\sigma,\Pi^\sigma)$-coloring of $G[S']$.
For $i\in[c]$, let $\psi_i$ be an $f_i$-invariant resampling of
$\psi_{i-1}$. Finally, let $\psi'=\psi_c$.

The choice of the subset selectors $f_i$ implies that $\mu_{i-1}(\psi_{i-1})\subseteq \mu_i(\psi_i)$ holds for each $i\in[c]$.
\begin{claim}\label{cl__probincr}
For each index $i\in[c]$, if $f_i(\psi_{i-1})=\{x_i,y_i\}$, then
$$\Pr{|\mu_i(\psi_i)|\ge |\mu_{i-1}(\psi_i)|+1}\ge p.$$
\end{claim}
\begin{subproof}
Recall that $f_i(\psi_{i-1})=\{x_i,y_i\}$ means that neither of the endpoints $x_i$ and $y_i$ of the non-edge $m_i$ is incident with a $\psi_{i-1}$-monochromatic non-edge belonging to $M_{i-1}$.
Let $X_i=\{\psi_{i-1}(z):zx_i\in M_{i-1}\}$ and $Y_i=\{\psi_{i-1}(z):zy_i\in M_{i-1}\}$;
since $|N(v)\cap S\setminus\{x_i,y_i\}|\le \gamma-2$, we have $|X_i|,|Y_i|\le \gamma-2$.
Let $\psi'_{i-1}$ be the restriction of $\psi_{i-1}$ to $S'\setminus\{x_i,y_i\}$,
and let $(L_i,\Pi_i)$ be the correspondence assignment for the induced subgraph $G[\{x_i,y_i\}]$
such that
\begin{align*}
L_i(x_i)&=L^{\sigma\cup \psi'_{i-1}}(x_i)\setminus X_i,\\
L_i(y_i)&=L^{\sigma\cup \psi'_{i-1}}(y_i)\setminus Y_i,
\end{align*}
and $\Pi_i$ is null (since $x_iy_i$ is a non-edge).
Observe that an $\{x_i,y_i\}$-modification of $\psi_{i-1}$ is $f_i$-invariant if and only if its restriction to $\{x_i,y_i\}$ is an $(L_i,\Pi_i)$-coloring of $G[\{x_i,y_i\}]$;
e.g., if we ended up recoloring the vertex $x_i$ to the color $\psi_{i-1}(z)\in X_i$ for some non-edge $zx_i\in M_{i-1}$, then the non-edge $zx_i$ would become $\psi_{i-1}$-monochromatic
and the value of $f_i$ would change to $\emptyset$.  Therefore, $\psi_i=\psi'_{i-1}\cup \psi''_i$, where $\psi''_i$ is a uniformly random $(L_i,\Pi_i)$-coloring of $G[\{x_i,y_i\}]$.
Since $\Delta(G[S])\le \gamma$ by (a) and $|X_i|,|Y_i|\le \gamma-2$, by (c) we have
$$|L_i(x_i)\cap L_i(y_i)|\ge |L(x_i)\cap L(y_i)|-4\gamma+4\ge |L(x_i)|+|L(y_i)|-|\Gamma|-4\gamma +4\ge \delta.$$
Thus,
\[\Pr{|\mu_i(\psi_i)|\ge |\mu_{i-1}(\psi_i)|+1}=\Pr{\psi_i(x_i)=\psi_i(y_i)}=\frac{|L_i(x_i)\cap L_i(y_i)|}{|L_i(x_i)|}\cdot \frac{1}{|L_i(y_i)|}\ge \frac{\delta}{\Delta^2}=p.\qedhere\]
\end{subproof}
We have
$$\Pr{\text{$\psi'$ does not $(\Delta,M)$-save $v$}}\le \Pr{|\mu_n(\psi_n)|\le 1+\deg v-\Delta},$$
which is the probability that the process ends with the set $M'=\mu_n(\psi_n)$ containing at most $1+\deg v-\Delta$ edges.
If that is the case, then since the set $M$ is $(v,\Delta,\beta)$-rich by (b), it follows that there are at least $\beta$ indices~$i$ for which the subset selector $f_i$
returns a non-empty set during the process.  For each of these indices, we have $|\mu_i(\psi_i)|\ge |\mu_{i-1}(\psi_i)|+1$ with probability at least $p$ by Claim~\ref{cl__probincr},
and since $|\mu_n(\psi_n)|\le 1+\deg v-\Delta\le 1$, this can only occur at most once.  Hence,
$$\Pr{|\mu_n(\psi_n)|\le 1+\deg v-\Delta}\le (1-p)^\beta+\beta p(1-p)^{\beta-1}=(1-p+\beta p)\cdot (1-p)^{\beta-1}\le (1+\beta p)e^{-(\beta-1)p}=q,$$
confirming (\ref{eq__toprove}).
\end{proof}

In particular, we immediately get the following consequence.

\begin{lemma}\label{lemma__can_color_sparse}
Let $\Delta$ be a positive integer, let $(\DD,\CC,\{\WW_C\}_{C\in \CC})$ be a setting for a minimal $\Delta$-counterexample $(G,L,\Pi)$,
let $S$ be a transversal for this setting, and let $\psi$ be a uniformly random $(L,\Pi)$-coloring of $G[S]$.
If $\Delta\ge 10^6$, then for every vertex $v\in V(G)\setminus \bigl(S\cup\bigcup\DD\bigr)$
and every partial $(L,\Pi)$-coloring $\sigma$ of $G[S]$ such that $N(v)\cap \dom(\sigma)=\emptyset$, we have
$$\Pr{\exc_\psi(v)\le 0\mid \text{$\psi$ extends $\sigma$}}\le \frac{1}{4\Delta^2}.$$
\end{lemma}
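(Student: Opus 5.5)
We apply Lemma~\ref{lemma__can_free_vertex} to the vertex $v$, with $M$ the set of non-edges between neighbors of $v$ that have both endpoints in $S$. First, we may assume $\deg v\in\{\Delta-1,\Delta\}$. Indeed, by Observation~\ref{obs__mindeg} and $|L(v)|=\Delta-1$, any vertex of smaller degree has $\exc_\psi(v)\ge |L(v)|-|N(v)\cap\dom(\psi)|-\deg^{-\psi}v\ge \Delta-1-\deg v>0$ automatically. Next, if $\psi$ $(\Delta,M)$-saves $v$, then $\exc_\psi(v)>0$. To see this, take $2+\deg v-\Delta$ pairwise disjoint $(v,\psi)$-monochromatic non-edges; disjointness can be ensured by the way the monochromatic set is built in Lemma~\ref{lemma__can_free_vertex}, or we simply choose such a subfamily. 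Each of these blocks at most one color while using two colored neighbors, so $|L_\psi(v)|\ge \Delta-1-|N(v)\cap S|+(2+\deg v-\Delta)$, which gives $\exc_\psi(v)\ge 1$. Since $v\notin S$, all of $S$ is colored and $\deg^{-\psi}v=\deg v-|N(v)\cap S|$. Hence the desired probability is bounded by the failure probability $q$ of Lemma~\ref{lemma__can_free_vertex}, and it remains to choose the parameters and check (a)--(c).

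We take $\gamma=\tfrac{1}{5}\Delta$, so (a) holds by transversal property (i). For (c), note that $|L(u)|=\Delta-1$ for all $u$, which is at most $\Delta$. We need $\Delta-1\ge \tfrac{\Delta+\frac45\Delta+\delta-4}{2}$, i.e., $\delta\le \tfrac{1}{5}\Delta+2$, so we choose $\delta=\tfrac{1}{5}\Delta$. This gives $p=\tfrac{1}{5\Delta}$.

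For (b), we use transversal property (ii): $|M|\ge \tfrac{1}{4800}\Delta^2$. A set $M'$ of fewer than $2+\deg v-\Delta\le 2$ non-edges, i.e.\ at most one non-edge $xy$, touches at most $2\gamma\le \tfrac{2}{5}\Delta$ non-edges of $M$, since each endpoint has at most $\gamma$ neighbors (hence non-neighbors relevant in $M$ are among $N(v)\cap S$, at most $\gamma$ vertices). Thus $M$ is $(v,\Delta,\beta)$-rich with $\beta=\tfrac{1}{4800}\Delta^2-\tfrac25\Delta\ge \tfrac{1}{4801}\Delta^2$ for large $\Delta$.

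Then $\beta p\ge \tfrac{\Delta}{5\cdot 4801}$, and $q\le e(1+\beta p)e^{-\beta p}$. The main (only) obstacle is the numerical check that $e(1+x)e^{-x}\le \tfrac{1}{4\Delta^2}$ for $x\approx \Delta/24005$ once $\Delta\ge 10^6$. Here $x\ge 41$, giving $e^{-x}(1+x)e\approx e^{-37}$, which is far below $\tfrac{1}{4}\cdot 10^{-12}\approx e^{-29}$. For larger $\Delta$ the left side decays exponentially while the right side decays only polynomially, so the bound persists; a careful monotonicity check in $\Delta$ finishes it.
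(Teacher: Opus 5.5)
Your strategy is the paper's: apply Lemma~\ref{lemma__can_free_vertex} with $M$ the set of all non-edges of $G[N(v)\cap S]$ and $\gamma=\delta=\tfrac15\Delta$. You get richness by discarding the fewer than $\tfrac25\Delta$ non-edges of $M$ that meet a single non-edge, using $|N(v)\cap S|\le\tfrac15\Delta$. However, your justification of the step ``$\psi$ $(\Delta,M)$-saves $v$ $\Rightarrow$ $\exc_\psi(v)>0$'' does not work as written.

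When $\deg v=\Delta$, the two $(v,\psi)$-monochromatic non-edges given by the lemma may share an endpoint, say $xy$ and $xz$. Then you cannot choose a disjoint subfamily. The internal construction in the proof of Lemma~\ref{lemma__can_free_vertex} is not part of its statement, and the final set of monochromatic non-edges there is not a matching anyway. The missing ingredient is that $(L,\Pi)$ is full, which is part of the definition of a $\Delta$-counterexample. So every color at a neighbor of $v$ blocks exactly one color at $v$, and ``together block at most one color'' means ``block the same color''. That relation is transitive, so $x,y,z$ block a single color and you still save two, giving $\exc_\psi(v)\ge 1$. Fullness is genuinely needed: without it, $\psi(x)$ could block nothing while $\psi(y),\psi(z)$ block two different colors. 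Both $xy$ and $xz$ would then be monochromatic, yet the excess could stay at $0$.

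There is also a small numerical slip. The inequality $\tfrac{1}{4800}\Delta^2-\tfrac25\Delta\ge\tfrac{1}{4801}\Delta^2$ needs $\Delta\ge\tfrac25\cdot 4800\cdot 4801\approx 9.2\cdot 10^6$, which the hypothesis $\Delta\ge 10^6$ does not cover. There are two easy fixes:
\begin{itemize}
\item Keep $\beta=\tfrac{1}{4800}\Delta^2-\tfrac25\Delta$ directly. Then $\beta p=\tfrac{\Delta}{24000}-\tfrac{2}{25}\ge 41.5$, so your estimate $x\ge 41$ is actually correct.
\item Take $\beta=\tfrac{1}{5000}\Delta^2$ as the paper does. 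This is valid once $\Delta\ge 48000$ and gives $\beta p=\Delta/25000\ge 40$, with $e\cdot 41\,e^{-40}\approx 5\cdot 10^{-16}<\tfrac14\cdot 10^{-12}$.
\end{itemize}
Finally, your low-degree case is vacuous by Observation~\ref{obs__mindeg}.
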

\begin{proof}
Let $\gamma=\tfrac{1}{5}\Delta$; since $S$ is a transversal, each vertex of $G$ has at most $\gamma$ neighbors in $S$.
Let $M$ be the set of all non-edges of $G[N(v)\cap S]$; since $S$ is a transversal, we have $|M|\ge \tfrac{1}{4800}\Delta^2$. Let $\beta=\tfrac{1}{5000}\Delta^2$.
For every non-edge $xy\in M$, less than $|N(v)\cap S|\le \tfrac{1}{5}\Delta$ non-edges of $M$
are incident with each of $x$ and $y$.
Hence, the number of non-edges in $M$ incident neither with $x$ nor with $y$ is at least
$$|M|-\tfrac{2}{5}\Delta\ge \tfrac{1}{4800}\Delta^2-\tfrac{2}{5}\Delta\ge \beta,$$
and thus $M$ is $(v,\Delta,\beta)$-rich.

\noindent Let $\delta=\tfrac{\Delta}{5}$ and note that for every $u\in V(G)$, we have
$$|L(u)|\ge \Delta-1>\frac{\Delta+4\gamma+\delta-4}{2}.$$
By Lemma~\ref{lemma__can_free_vertex}, the uniform $(L,\Pi)$-coloring of $G[S]$ $(\Delta,M)$-fails $v$ with probability at most
$$e\cdot \Bigl(1+\frac{\beta\delta}{\Delta^2}\Bigr)\cdot \exp\Bigl(-\frac{\beta\delta}{\Delta^2}\Bigr)=
  e\cdot \bigl(1+\tfrac{1}{25000}\Delta\bigr)\cdot \exp\bigl(-\tfrac{1}{25000}\Delta\bigr)\le \frac{1}{4\Delta^2},$$
and thus 
\[\Pr{\exc_\psi(v)\le 0\mid\text{$\psi$ extends $\sigma$}}\le \frac{1}{4\Delta^2}.\qedhere\]
\end{proof}

Let $\Delta$ be a positive integer, let $(\DD,\CC,\{\WW_C\}_{C\in \CC})$ be a setting for a minimal $\Delta$-counterexample $(G,L,\Pi)$,
let $S$ be a transversal for this setting, and let $\psi$ be a uniformly random $(L,\Pi)$-coloring of~$G[S]$.
Next, we need to similarly argue that each core $C\in \CC$ almost surely contains enough (at least two) vertices with positive $\psi$-excess,
and thus $\psi$ can be extended to $C$ using Observation~\ref{obs__greedy}.  Using Lemma~\ref{lemma__can_free_vertex}, it is easy to see
that each element of $\WW_C$ $C$-bisected by $S$ contributes such a vertex with a constant probability.
Since the events for distinct elements of $\WW_C$ are not independent, we need to use another resampling argument to
obtain the desired result.

More precisely, let $\varphi$ be an $(L,\Pi)$-coloring of $G[S]$, let $X$ be an element of $\WW_C$ which is $C$-bisected by $S$, and let $v$ be the vertex in $X\cap C$.
A \emph{$(C,\varphi)$-saving pair} for the set $X$ is a pair $(a,x)$ of vertices of $G$ such that
$a\in C\cap S$, $x\in X\setminus\{v\}$, and the colors $\varphi(a)$ and $\varphi(x)$ together block at most one color at $v$.
If there exists such a $(C,\varphi)$-saving pair, we say that the vertex $a$ is a \emph{$(C,\varphi)$-helper} for $X$.
We say that the set $X$ is \emph{$(C,\varphi)$-successful} if there exist at least $|X|-1$ distinct $(C,\varphi)$-saving pairs for $X$.
In that case, there exists at least one set $A\subseteq C\cap S$ such that
\begin{itemize}
\item the vertices of $A$ are $(C,\varphi)$-helpers for $X$, and 
\item there exist are at least $|X|-1$ distinct $(C,\varphi)$-saving pairs $(a,x)$ for $X$ such that $a\in A$;
\end{itemize}
we say that such a set $A$ \emph{$(C,\varphi)$-saves} $X$.
For a set $\WW'\subseteq \WW_C$, we define $\numsucc_C(\WW',\varphi)$ as the number of elements of $\WW'$
which are $C$-bisected by $S$ and $(C,\varphi)$-successful.

We are going to resample a uniformly random $(L,\Pi)$-coloring of $G[S]$ on the sets $(X\setminus C)\cup (C\cap S)$ for each element $X\in \WW_C$
which is $C$-bisected by $S$ one by one, arguing that there is a good probability~$X$ becomes successful.
However, as in the proof of Lemma~\ref{lemma__can_free_vertex}, we need to take care not to cancel previously processed successful sets,
and thus (by the means of a suitably defined subset selector $f$), we avoid resampling on the part of $C\cap S$ responsible for their success
(this motivates the definition of a \emph{witness set} below).  We also need to ensure that the resampling is $f$-invariant,
which turns out to be equivalent to removing certain colors (whose usage would change either the success status or the witness set
of a previously processed element of $\WW_C$) from the lists of some of the vertices (this motivates the definition of the \emph{witness protection zone}
and the \emph{witness-forbidden colors}).

Let us now formally state the required definitions. Let $\prec$ be an arbitrary fixed ordering of the vertices of $G$,
and let $\prec_l$ be the corresponding lexicographic ordering of the subsets of $V(G)$; that is, we have $A\prec_l A'$ if $A'\neq\emptyset$ and
\begin{itemize}
\item $A=\emptyset$, or
\item $A\neq\emptyset$ and $\min_\prec A\prec\min_\prec A'$, or
\item $A\neq\emptyset$, $\min_\prec A=\min_\prec A'$, and $A\setminus \{\min_\prec A\}\prec_l A'\setminus \{\min_\prec A'\}$.
\end{itemize}
Consider any element $X$ of $\WW_C$ which is $C$-bisected by $S$, and let $v$ be the unique vertex in $X\cap C$.
As before, let $\varphi$ be an $(L,\Pi)$-coloring of $G[S]$.
\begin{itemize}
\item If the set $X$ is $(C,\varphi)$-successful, then the \emph{$(C,\varphi)$-witness} for $X$ is the minimal
set $A_X\subseteq C\cap S$ (in the ordering $\prec_l$) that $(C,\varphi)$-saves $X$.  Note that $|A_X|\le |X|-1\le 2$.
The \emph{$(C,\varphi)$-witness protection zone} for $X$ consists of all vertices $b\in (C\cap S)\setminus A_X$ such that $b\prec \max_{\prec} A_X$.
\item If $X$ is not $(C,\varphi)$-successful, then the \emph{$(C,\varphi)$-witness} for $X$ is the set $A_X\subseteq C\cap S$ consisting of all $(C,\varphi)$-helpers for $X$.
Since $X$ is not $(C,\varphi)$-successful, we have $|A_X|<|X|-1\le 2$.
In this case, the \emph{$(C,\varphi)$-witness protection zone} for $X$ is equal to $(C\cap S)\setminus A_X$.
\end{itemize}
In both cases, for a vertex $b$ in the $(C,\varphi)$-witness protection zone for $X$, we say that a color $\alpha\in L(b)$ is \emph{$(C,\varphi,X)$-witness-forbidden} at $b$
if there exists a vertex $x\in X\setminus C$ such that $\varphi(x)$ and the color $\alpha$ at $b$ block the same color at $v$.  We need two observations.
\begin{itemize}
\item The correspondence assignment $(L,\Pi)$ is full by an assumption from the definition of a $\Delta$-counterexample,
and thus for each vertex $x\in X\setminus\{v\}$, there exists exactly one such color $\alpha\in L(b)$.
Consequently, there are at most $|X|-1\le 2$ colors that are $(C,\varphi,X)$-witness-forbidden at $b$.
\item Since $b$ is in the $(C,\varphi)$-witness protection zone, it is not a $(C,\varphi)$-helper for $X$ (when $X$ is $(C,\varphi)$-successful, this is by the minimality of $A_X$
in the lexicographic ordering $\prec_l$).  Moreover, recoloring $b$ turns it into a helper for $X$ if and only if the new color of $b$ is $(C,\varphi,X)$-witness-forbidden.
\end{itemize}
Based on these definitions, we can now define the desired subset selector and state the conditions
describing the corresponding invariant resampling.  For a set $\WW'\subseteq \WW_C$ containing only
elements which are $C$-bisected by $S=\dom(\varphi)$ and another such element $Y\in \WW_C\setminus \WW'$,
we let
$$f_{C,\WW',Y}(\varphi)=(Y\setminus C)\cup \Bigl((C\cap S)\setminus \bigcup_{X\in \WW'} A_X\Bigr).$$
Let us also define a correspondence assignment $(L_{C,\WW',Y,\varphi},\Pi_{C,\WW',Y,\varphi})$ as follows.
We start from the correspondence assignment $(L,\Pi)$, and for each set $X\in \WW'$ and each vertex $b\in f_{C,\WW',Y}(\varphi)$ belonging to the
$(C,\varphi)$-witness protection zone for $X$, we remove all (at most two) $(C,\varphi,X)$-witness-forbidden colors at $b$ from the list of $b$.
When a color is removed, we also remove the incident correspondences.

\begin{lemma}\label{lemma__invariant}
Let $\Delta$ be a positive integer, let $(\DD,\CC,\{\WW_C\}_{C\in \CC})$ be a setting for a minimal $\Delta$-counterexample $(G,L,\Pi)$,
let $S$ be a transversal for this setting, and let $\varphi$ be an $(L,\Pi)$-coloring of $G[S]$.
Consider a core $C\in\CC$, a set $\WW'\subseteq \WW_C$ containing only elements which are $C$-bisected by $S$,
and another such element $Y\in \WW_C\setminus \WW'$, and let $f=f_{C,\WW',Y}$ and $(L',\Pi')=(L_{C,\WW',Y,\varphi},\Pi_{C,\WW',Y,\varphi})$.
An $f(\varphi)$-modification $\varphi'$ of $\varphi$ is $f$-invariant if and only if $\varphi'$ is an $(L',\Pi')$-coloring.
\end{lemma}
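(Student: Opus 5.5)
Both directions will be checked element by element over $X\in\WW'$. The key fact is that $f(\varphi')=f(\varphi)$ holds if and only if every $X\in\WW'$ has the same $(C,\varphi')$-witness as its $(C,\varphi)$-witness, i.e., $A'_X=A_X$. Indeed, $f$ depends on $\varphi$ only through the union of the witnesses. If the union is preserved, then each individual witness is preserved, as argued below. Throughout, write $Z=f(\varphi)$. Since $\varphi'$ is a $Z$-modification, it agrees with $\varphi$ outside $Z$. In particular it agrees on every $A_X$ with $X\in\WW'$, since $Z$ avoids $\bigcup_{X\in\WW'}A_X$, and on $X\setminus C$ for $X\in\WW'$, since the sets in $\WW_C$ are disjoint and so $X\setminus C$ is disjoint from $Y\setminus C$. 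Moreover $v_X\notin S$, so $v_X$ is never recolored.

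For a fixed $X\in\WW'$, the colors at $X\setminus C$ are unchanged, so whether a vertex $a\in C\cap S$ is a helper for $X$ depends only on its own color. By the second bullet observation preceding the lemma, a vertex $b\in(C\cap S)\setminus A_X$ is a helper under $\varphi'$ exactly when $\varphi'(b)$ is $(C,\varphi,X)$-witness-forbidden at $b$.

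For the ``if'' direction, assume that $\varphi'$ is an $(L',\Pi')$-coloring. The vertices of $A_X$ keep their colors, so they remain helpers, and the saving pairs inside $A_X$ persist. No vertex of the protection zone lying in $Z$ uses a forbidden color, and vertices outside $Z$ are unchanged, so no protection-zone vertex becomes a helper. If $X$ was successful, then $A_X$ still saves $X$. Any $\prec_l$-smaller saving set would have to use a vertex $b\prec\max A_X$ outside $A_X$ that is a helper. Such a $b$ lies in the protection zone and is not a helper, so the minimal witness is again $A_X$. If $X$ was unsuccessful, then the protection zone is all of $(C\cap S)\setminus A_X$, so the helper set is exactly $A_X$ again, and $X$ is still unsuccessful. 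Thus every witness is preserved and $f(\varphi')=f(\varphi)$. That $\varphi'$ is proper with respect to $\Pi$ follows because $\Pi'$ is a restriction of $\Pi$.

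For the ``only if'' direction, assume $f(\varphi')=f(\varphi)$, so $\bigcup A'_X=\bigcup A_X$. We want to show that no $b\in Z$ in the protection zone of some $X$ takes a forbidden color. Suppose some such $b$ did. Then $b$ becomes a helper for $X$ under $\varphi'$. If $X$ was unsuccessful, $b$ joins the helper set, so $A'_X\ni b\in Z$. If $X$ was successful, then $A_X$ still saves $X$ because its colors are unchanged, and replacing $\max A_X$ by $b$ (or adding $b$ appropriately) gives a $\prec_l$-smaller saving set. Hence the new witness differs from $A_X$ and is $\prec_l$-smaller, so it must contain a vertex $\prec\max A_X$ outside $A_X$, which is a new helper in $Z$. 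In either case $b\in\bigcup A'_X$ while $b\notin\bigcup A_X$, contradicting $f(\varphi')=f(\varphi)$.

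The main obstacle is exactly this lexicographic step. I will need to check carefully that whenever a protection-zone vertex becomes a helper, the new minimal witness really contains some vertex of $Z$ rather than only vertices of other witnesses $A_{X'}$. I plan to handle it by noting that the new witness is $\prec_l$-smaller than $A_X$, that its minimal differing element lies outside $A_X$ and is a helper, and that every helper outside $\bigcup A_{X'}$ lies in $Z$. If the differing vertex instead lies in some other $A_{X'}$, its color is unchanged and so it was already a helper under $\varphi$. That contradicts it being in the protection zone of $X$ for the unchanged $\varphi$, and this closes the case.
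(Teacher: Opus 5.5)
Your ``if'' direction is correct and essentially matches the paper. The gap is in the ``only if'' direction, in the case where $X$ is $(C,\varphi)$-unsuccessful.

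\textbf{The failing step.} You argue that $b$ joins the helper set and hence $b\in A'_X$. That presumes $X$ is still $(C,\varphi')$-unsuccessful, so that $A'_X$ is the set of all helpers. But recoloring vertices of $Z$ can make $X$ $(C,\varphi')$-successful. Then $A'_X$ is the $\prec_l$-minimal saving set, and it need not contain $b$. For instance, suppose $|X|-1=2$, $A_X=\{a\}$ gives a single saving pair, and exactly two vertices $b,c\in Z$ with $a,c\prec b$ take $X$-forbidden colors. Then $A'_X=\{a,c\}$, which does not contain $b$. So the step ``$b$ joins the helper set, so $A'_X\ni b$'' fails, and so does your closing claim $b\in\bigcup_{X''\in\WW'}A'_{X''}$. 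Your ``main obstacle'' paragraph only treats the successful case, so it does not catch this.

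\textbf{The repair.} It is short. Suppose $X$ becomes $(C,\varphi')$-successful and $A'_X\cap Z=\emptyset$. Then every vertex of $A'_X$ and of $X\setminus C$ has the same color under $\varphi$. So $A'_X$ already saves $X$ under $\varphi$, which is a contradiction. Hence $A'_X$ meets $Z$. Since $Z$ is disjoint from $\bigcup_{X''\in\WW'}A_{X''}$, this gives $f(\varphi')\neq f(\varphi)$. Your conclusion should accordingly be weakened to ``some vertex of $Z$ lies in $\bigcup_{X''\in\WW'}A'_{X''}$.''

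\textbf{A smaller omission.} In the successful case, your remark that the first differing element lies outside $A_X$ overlooks $A'_X=\{a_1\}\subsetneq A_X=\{a_1,a_2\}$. This case is excluded because $a_1$ keeps its color, so $\{a_1\}$ would already have saved $X$ under $\varphi$.

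\textbf{Comparison with the paper.} With these fixes, your route is valid but differs from the paper's.
\begin{itemize}
\item The paper avoids the case split by taking the $\prec$-smallest $b$ with $\varphi'(b)\notin L'(b)$. From $b\in f(\varphi')$ it gets $b\notin A'_X$, so $X$ must be $(C,\varphi')$-successful with $b\succ\max_\prec A'_X$. Minimality of $b$ then yields $A'_X\subseteq A_X$, so $X$ is $(C,\varphi)$-successful with $A_X=A'_X$. This contradicts $b$ lying in the protection zone.
\item Your approach instead shows directly that the union of witnesses changes. It needs no minimality of $b$, but it does need the extra observation above for an unsuccessful $X$ that becomes successful.
\end{itemize}
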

\begin{proof}
Let us fix an $f(\varphi)$-modification $\varphi'$ of $\varphi$, and
suppose first that $\varphi'$ is an $(L',\Pi')$-coloring.  Consider any set $X\in\WW'$.
\begin{itemize}
\item If $X$ is $(C,\varphi)$-successful, then this is ensured already by the colors of vertices in the $(C,\varphi)$-witness $A_X\subseteq (C\cap S)\setminus f(\varphi)$,
and the colors of these vertices are the same in $\varphi'$.
Thus, $X$ is also $(C,\varphi')$-sucessful.  Moreover, the choice of $L'$ ensures that no vertex $b\in (C\cap S)\setminus A_X$ such that $b\prec \max_\prec A_X$
is a $(C,\varphi')$-helper, and thus $A_X$ is also the $(C,\varphi')$-witness.
\item If $X$ is not $(C,\varphi)$-successful, then the $(C,\varphi)$-witness $A_X\subseteq (C\cap S)\setminus f(\varphi)$ contains all $(C,\varphi)$-helpers for $X$.
The coloring $\varphi'$ assigns the same colors to these vertices, and thus they are also $(C,\varphi')$-helpers.
Moreover, the choice of $L'$ ensures that no vertex in $(C\cap S)\setminus A_X$ is a $(C,\varphi')$-helper; consequently, $A_X$ is also the $(C,\varphi')$-witness.
\end{itemize}
Since all elements of $\WW'$ have their $(C,\varphi')$-witness equal to their $(C,\varphi)$-witness, it follows that $f(\varphi')=f(\varphi)$, and thus
$\varphi'$ is an $f$-invariant modification of $\varphi$.

Conversely, assume that $\varphi'$ is an $f$-invariant modification of $\varphi$, and suppose for a contradiction that $\varphi'$ is not an $(L',\Pi')$-coloring.
Let $b\in C\cap S$ be the smallest vertex (in the ordering $\prec$) such that $\varphi'(b)\not\in L'(b)$.
Since $L'(b)\neq L(b)$, we have $b\in f(\varphi)$; and moreover, since $\varphi'(b)\not\in L'(b)$,
there exists a set $X\in \WW'$ such that $b$ is in the $(C,\varphi)$-witness protection zone for $X$ and the color $\varphi'(b)$ is $(C,\varphi,X)$-witness-forbidden.
Hence, $b$ is a $(C,\varphi')$-helper for $X$.  Since $b\in f(\varphi)=f(\varphi')$, the vertex $b$ does not belong to the $(C,\varphi')$-witness $A'_X$ for $X$.
This means that $X$ is $(C,\varphi')$-successful, and by the lexicographical minimality of $A'_X$, we have $b\succ\max_\prec A'_X$.

Consider any vertex $b'\in A'_X$.  If $b'\not\in A_X$, then since $b$ is in the $(C,\varphi)$-witness protection zone for $X$ and $b'\prec b$,
the vertex $b'$ belongs to the $(C,\varphi)$-witness protection zone for $X$.  Moreover, by the minimality of $b$, we have $\varphi'(b')\in L'(b)$,
and thus the color $\varphi'(b')$ is not $(C,\varphi,X)$-witness-forbidden.  However, this implies that $b'$ is not a $(C,\varphi')$-helper for $X$,
contradicting the claim that $b'\in A'_X$.  Therefore, we have $b'\in A_X$ for every $b'\in A'_X$, i.e., $A'_X\subseteq A_X$.

It follows that $\varphi'$ assigns the same colors to the vertices of $A'_X$ as the coloring $\varphi$; and since already the colors of vertices
in $A'_X$ show that $X$ is $(C,\varphi')$-successful, the set $X$ must also be $(C,\varphi)$-successful.  Moreover, by the lexicographic minimality of $A_X$,
we have $A_X=A'_X$.  However, since $b\succ\max_\prec A'_X$, this would imply that $b$ is not the $(C,\varphi)$-witness protection zone for $X$, which is a contradiction.

It follows that if $\varphi'$ is an $f$-invariant modification of $\varphi$, then it is an $(L',\Pi')$-coloring.
\end{proof}

We are now ready to deal with the cores of the sparse-dense decomposition.

\begin{lemma}\label{lemma__can_color_core}
Let $\Delta$ be a positive integer, let $(\DD,\CC,\{\WW_C\}_{C\in \CC})$ be a setting for a minimal $\Delta$-counterexample $(G,L,\Pi)$,
let $S$ be a transversal for this setting, and let $\psi$ be a uniformly random $(L,\Pi)$-coloring of $G[S]$.
If $\Delta\ge \thebound$, then for every core $C\in \CC$ and every partial $(L,\Pi)$-coloring $\sigma$ of $G[S]$ such that
$\bigl(C\cup \bigcup \WW_C\bigr)\cap \dom(\sigma)=\emptyset$, we have
$$\Pr{\numsucc_C(\WW_C,\psi)\le 1\mid \text{$\psi$ extends $\sigma$}}\le \frac{1}{4\Delta^2}.$$
\end{lemma}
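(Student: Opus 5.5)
Fix $C\in\CC$ and $\sigma$ as in the statement. Let $S'=S\setminus\dom(\sigma)$ and let $\psi'$ be a uniformly random $(L^\sigma,\Pi^\sigma)$-coloring of $G[S']$. By Observation~\ref{obs__combine}, $\sigma\cup\psi'$ is distributed as $\psi$ conditioned on extending $\sigma$, so it suffices to bound $\Pr{\numsucc_C(\WW_C,\sigma\cup\psi')\le 1}$.

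By property (iv) of a transversal, let $Y_1,\dots,Y_k$ with $k\ge \Delta/10^4$ be elements of $\WW_C$ that are $C$-bisected by $S$. By the assumption on $\dom(\sigma)$, all of $C\cap S$ and all $Y_j\setminus C$ lie in $S'$. We generate $\psi'$ as in the proof of Lemma~\ref{lemma__can_free_vertex}. Start from a uniform coloring $\psi_0$. For $j=1,\dots,k$, let $\psi_j$ be an $f_j$-invariant resampling of $\psi_{j-1}$, where $f_j=f_{C,\WW'_j,Y_j}$ with $\WW'_j=\{Y_1,\dots,Y_{j-1}\}$. By Lemma~\ref{tool__resampling}, each $\psi_j$ is uniform. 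Since the witnesses of $Y_1,\dots,Y_{j-1}$ are untouched, and Lemma~\ref{lemma__invariant} shows that the invariant modifications are exactly the $(L',\Pi')$-colorings, every previously successful $Y_i$ stays successful. So the number of successful elements among $Y_1,\dots,Y_j$ is non-decreasing in $j$.

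\textbf{Key claim.} Conditioned on $\psi_{j-1}$, the element $Y_j$ becomes $(C,\psi_j)$-successful with probability at least some absolute constant $p_0>0$, say $p_0\ge 10^{-3}$.

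To prove it, observe that the resampling is a uniform $(L',\Pi')$-coloring of $G[F]$, where $F=(Y_j\setminus C)\cup R$ and $R=(C\cap S)\setminus\bigcup_{i<j}A_{Y_i}$. Colors outside $F$ are fixed, so we can pass to the lists $L_j$ obtained by removing colors blocked by fixed neighbors and the at most two witness-forbidden colors per earlier $Y_i$. Then $|R|\ge \Delta/6-2\Delta/10^4$. Each vertex of $Y_j\setminus C$ is $(C,85)$-loose, and each vertex of $F$ has at most $\Delta/5$ neighbors in $S$ by (i). Hence the lists of the vertices $x\in Y_j\setminus C$ keep at least $\Delta-1-\Delta/5$ colors.

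Straighten the star at $v$ (the vertex of $Y_j\cap C$) by Observation~\ref{obs__straightening}. A pair $(a,x)$ is saving when $\psi(a)=\psi(x)$ in the straightened names. We now condition on the colors of $R$ and choose the colors of the one or two vertices $x\in Y_j\setminus C$ last. These vertices have at most $85$ neighbors in $R$, and given everything else, each $x$ is uniform on its remaining list, whose size is at most $\Delta$. The set $\{\psi(a):a\in R\}$ of straightened colors has size at least about $|R|/(\text{multiplicity})$. A resampling-count argument shows that with constant probability it has $\Omega(\Delta)$ distinct values inside the available list of $x$. Then $x$ picks one of these colors with probability $\Omega(1)$. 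For $|Y_j|=3$ we need two saving pairs, which may share the same helper $a$ or use two helpers. Since the vertices of $Y_j\setminus C$ are chosen nearly independently (they may be adjacent, which costs at most one color), we get success probability at least $p_0^2$. Rather than tracking these cases separately, we will just take $p_0$ to be a small absolute constant that works for both.

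Given the key claim, the success indicators dominate independent Bernoulli$(p_0)$ variables, so
$$\Pr{\numsucc_C\le 1}\le (1-p_0)^k+kp_0(1-p_0)^{k-1}\le (1+kp_0)e^{-(k-1)p_0}.$$
With $k\ge\Delta/10^4$ and $\Delta\ge\thebound$, this is below $1/(4\Delta^2)$ provided $p_0\gtrsim 10^{-4}$ (roughly $kp_0\ge 40\ln\Delta$).

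The main obstacle is the key claim: a clean constant lower bound on the success probability under the restricted lists. The difficulty is that many vertices of $R$ may receive the same straightened color, which would shrink the set of colors available for saving. I would first try to avoid controlling the distribution of $\psi(R)$ at all. Instead, fix $x$'s color first and lower-bound the probability that some $a\in R$ hits it. Resampling the single vertex $a$ alone, conditioned on all else, makes $a$ uniform on a list of size at least $\Delta-1-\Delta/5-2-O(1)$. That list contains the required color with probability at least $1/\Delta$, so over $|R|\ge\Delta/6.1$ vertices we get about $1-e^{-1/6.1}$. To justify this, I would treat $R$ as independent single-vertex resamplings ordered by $\prec$, which is legitimate since $G[R]$ is a clique.
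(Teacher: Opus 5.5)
Your skeleton matches the paper's: conditioning via Observation~\ref{obs__combine}, sequential $f_{C,\WW'_j,Y_j}$-invariant resamplings, monotonicity from Lemma~\ref{lemma__invariant}, and the binomial tail at the end. The gap is that the key claim is never proved, and the step you propose for it fails.

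When you fix the straightened color $c$ of $x$ and resample a single $a\in R$ with everything else fixed, $a$ is uniform on its \emph{available} list. So it hits $c$ with probability $1/|\text{list}|$ only if $c$ is available at $a$, and with probability $0$ otherwise. You cannot assume availability. The color $c$ can be blocked at $a$ by any of its up to $\Delta/5$ colored neighbours in $S$. This includes the other vertices of $R$, because only the star at $v$ is straight and the matchings inside $C$ are arbitrary. It can also be blocked by $x$ itself, or removed as witness-forbidden.

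In correspondence coloring this can happen across all of $R$ at once. Suppose that, relative to the straightening at $v$, the color $\gamma$ at $a_s$ blocks $\gamma+1$ at $a_t$ whenever $s<t$ (indices modulo $|\Gamma|$). Then one vertex of $R$ colored $c-1$ removes $c$ from the lists of all later vertices of $R$. Nothing in your argument controls this, so the bound of about $1-e^{-1/6.1}$ is unfounded.

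Your first route has the same hole. Even if the straightened colors on $R$ took $\Omega(\Delta)$ distinct values, the available list of $x$ can miss up to $\Delta/5\ge |C\cap S|\ge |R|$ colors, so nothing forces an overlap.

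The paper closes this gap by resampling both ends of a \emph{non-edge} jointly. Let $M_j$ be the set of non-edges between $Y_j\setminus C$ and $C\cap f_j(\psi_{j-1})$. Resample a pair $\{x,a\}\in M_j$, skipping pairs that touch an already monochromatic non-edge. Since $x\nsim a$, the two conditional lists $L^*(x)$ and $L^*(a)$ are independent. The probability that they receive the same straightened color is $|L^*(x)\cap L^*(a)|/(|L^*(x)|\,|L^*(a)|)\ge \delta/\Delta^2$. This holds because both lists lie in a common $\Gamma$ of size at most $\Delta$ and each loses only $O(\gamma)$ colors, so their intersection stays of order $\Delta$.

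This is exactly Lemma~\ref{lemma__can_free_vertex}, applied with $\gamma=\Delta/5$, $\beta=\Delta/7$ (using that $x$ is $(C,85)$-loose) and $\delta=\Delta/6$. When $\deg v=\Delta$, two such successes are needed, so the per-step constant is only about $(1/42)^2/2$, slightly above $1/5000$, not $10^{-3}$. That is still enough, since $k\ge\Delta/10^4$ and $\Delta\ge\thebound$ give $kp_0\ge 60$.

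Your threshold $p_0\gtrsim 10^{-4}$ is too optimistic. At $\Delta=\thebound$ it gives $kp_0=30$ and a tail of roughly $3\cdot 10^{-12}$, far above $1/(4\Delta^2)$.
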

\begin{proof}
Let $m=\lceil\tfrac{1}{10^4}\Delta\rceil$.  Since $S$ is a transversal, there exist distinct elements $X_1,\ldots,X_m\in\WW_C$ which are $C$-bisected by $S$.
Let $\WW_0 = \emptyset$ and for each $i\in[m]$, let $\WW_i=\{X_1,\ldots, X_i\}$.
For each $i\in [m]$, let $v_i$ be the unique vertex in $X_i\cap C$, let $f_i$ denote the subset selector $f_{C,\WW_{i-1},X_i}$, and
for any $(L,\Pi)$-coloring $\varphi$ of $G[S]$, let $(L_{\varphi,i},\Pi_{\varphi,i})$ denote the correspondence assignment
$(L_{C,\WW_{i-1},X_i,\varphi},\Pi_{C,\WW_{i-1},X_i,\varphi})$.

By Observation~\ref{obs__combine} and Lemmas~\ref{tool__resampling} and \ref{lemma__invariant}, we can obtain a uniformly random $(L,\Pi)$-coloring $\psi$ of $G[S]$ extending $\sigma$
by the following process.  Let $\psi_0$ be a uniformly random $(L,\Pi)$-coloring of $G[S]$ extending $\sigma$.
For $i\in\{1,\ldots,m\}$, let $\psi_i$ be an $f_i$-invariant resampling of $\psi_{i-1}$;
that is, letting $\sigma_{i-1}$ be the restriction of $\psi_{i-1}$ to $S\setminus f_i(\psi_{i-1})$,
$\psi_i$ is a uniformly random $(L_{\psi_{i-1},i},\Pi_{\psi_{i-1},i})$-coloring of $G[S]$ extending $\sigma_{i-1}$.
Finally, let $\psi=\psi_m$.  Let us note that for each $i\in [m]$,
\begin{itemize}
\item since each $(C,\psi_{i-1})$-witness of an element of $\WW_{i-1}$ has size at most two and $S$ is a transversal, we have $|C\cap f(\psi_{i-1})|\ge |C\cap S|-2(m-1)\ge\tfrac{1}{6}\Delta-\tfrac{1}{5000}\Delta$, and
\item since each element $X\in \WW_{i-1}$ contributes at most two $(C,\psi_{i-1},X)$-witness-forbidden colors at each vertex $u\in f_i(\psi_{i-1})$, we have
$|L_{\psi_{i-1},i}(u)|\ge \Delta-1-2(m-1)\ge \Delta-\tfrac{1}{5000}\Delta-1$.
\end{itemize}

The choice of the subset selectors $f_i$ implies that $\numsucc_C(\WW_i,\psi_i)\ge \numsucc_C(\WW_{i-1},\psi_{i-1})$.
Moreover, if the set $X_i$ is $(C,\psi_i)$-successful, then $\numsucc_C(\WW_i,\psi_i)\ge \numsucc_C(\WW_{i-1},\psi_{i-1})+1$.
\begin{claim}\label{cl__probsucc}
For each $i\in [m]$, the probability that the set $X_i$ is $(C,\psi_i)$-successful is at least $\tfrac{1}{5000}$.
\end{claim}
\begin{subproof}
Let $M_i$ be the set of all non-edges with one endpoint in $X_i\setminus \{v_i\}$ and the other endpoint in $C\cap f_i(\psi_{i-1})$.
Observe that if the coloring $\psi_i$ $(\Delta,M_i)$-saves the vertex $v_i$, then the set $X_i$ is $(C,\psi_i)$-successful.
Thus, it suffices to use Lemma~\ref{lemma__can_free_vertex} to give a lower bound on the probability that $\psi_i$, a uniformly random $(L_{\psi_{i-1},i},\Pi_{\psi_{i-1},i})$-coloring of $G[S]$ extending $\sigma_{i-1}$,
$(\Delta,M_i)$-saves the vertex $v_i$.
\begin{itemize}
\item Since $S$ is a transversal, each vertex of $G$ has at most $\gamma=\tfrac{1}{5}\Delta$ neighbors in $S$.
\item Let $\beta=\tfrac{1}{7}\Delta$.
Since $X_i$ is a $(\Delta,85)$-benefactor for $C$, we have $2+\deg v_i-\Delta=|X_i\setminus\{v_i\}|$
and each vertex in $X_i\setminus\{v_i\}$ is incident with at least
$$|C\cap f_i(\psi_{i-1})|-85\ge \tfrac{1}{6}\Delta-\tfrac{1}{5000}\Delta-85\ge \beta+1$$
non-edges from $M_i$.  Consequently, the set $M_i$ is $(v,\Delta,\beta)$-rich.
\item Let $\delta=\tfrac{1}{6}\Delta$.  For each vertex $u\in N(v_i)\cap S$, we have
$$|L_{\psi_{i-1},i}(u)|\ge \Delta-\tfrac{1}{5000}\Delta-1\ge \tfrac{\Delta+4\gamma+\delta-4}{2}.$$
\end{itemize}
Let $p=\tfrac{\delta}{\Delta^2}=\tfrac{1}{6\Delta}$ and note that $\beta p=\tfrac{1}{42}$.
By Lemma~\ref{lemma__can_free_vertex}, the set $X_i$ is not $(C,\psi_i)$-successful
with probability at most 
$$(1+\beta p)e^{-(\beta-1)p}=\tfrac{43}{42}\cdot e^{-\frac{1}{42}}\cdot e^{\frac{1}{6\Delta}}\le 1-\tfrac{1}{5000},$$
as required.
\end{subproof}

Let $q=\tfrac{1}{5000}$.  The probability that $\numsucc_C(\WW_C,\psi)\le 1$ is bounded from above by the probability that
the set $X_i$ is $(C,\psi_i)$-successful for at most one index $i\in [m]$, which by Claim~\ref{cl__probsucc} is at most
\begin{align*}
(1-q)^m+mq(1-q)^{m-1}&\le (1+qm)\cdot (1-q)^{m-1}\le \frac{1+qm}{1-q}e^{-qm}\\
&\le 2q(m-1) e^{-qm}\le \frac{\Delta}{25\cdot 10^6}\cdot e^{-\frac{\Delta}{5\cdot 10^7}}\le \frac{1}{4\Delta^2}. \qedhere
\end{align*}
\end{proof}

\subsection{Putting things together}

We are now ready to finish the proof of our main result.

\begin{proof}[Proof of Theorem~\ref{thm__bk_conjecture_is_true_asym_for_DP_coloring}]
Let $\Delta\ge \thebound$ be an integer, and suppose for a contradiction that there exists a graph of maximum degree at most $\Delta$ and clique number at most $\Delta-1$
which is not $(\Delta-1)$-correspondence colorable.  Thus, there exists a minimal $\Delta$-counterexample $(G,L,\Pi)$.
By Corollary~\ref{cor__summary}, there exists a setting $(\DD,\CC,\{\WW_C\}_{C\in \CC})$ for this counterexample,
and by Lemma~\ref{lemma__extransvers}, there exists a transversal~$S$ for this setting.
Let $\psi$ be a uniformly random $(L,\Pi)$-coloring of $G[S]$.
\begin{itemize}
\item For each vertex $v\in V(G)\setminus \bigl(S\cup \bigcup \DD\bigr)$, let $A_v$ be the event that $\exc_\psi(v)\le 0$, and let $B_v$ be the set consisting of the neighbors of $v$ in $S$.
By Lemma~\ref{lemma__can_color_sparse}, for every $(L,\Pi)$-coloring $\sigma$ of $G[S\setminus B_v]$, we have
$$\Pr{A_v\mid \text{$\psi$ extends $\sigma$}}\le \frac{1}{4\Delta^2}.$$
Moreover, we have $|B_v|\le \tfrac{1}{5}\Delta$.
\item For each core $C\in \CC$, let $A_C$ be the event that $\numsucc_C(\WW_C,\psi)\le 1$ and let $B_C=\bigl(C\cup \bigcup\WW_C\bigr)\cap S$.
By Lemma~\ref{lemma__can_color_core}, for every $(L,\Pi)$-coloring $\sigma$ of $G[S\setminus B_C]$, we have
$$\Pr{A_C\mid \text{$\psi$ extends $\sigma$}}\le \frac{1}{4\Delta^2}.$$
Moreover, we have $|B_C|\le |C\cap S|+2|\WW_C|\le \tfrac{1}{5}\Delta+2\cdot\lceil \tfrac{1}{100}\Delta\rceil\le \tfrac{1}{4}\Delta$.
Let us remark that the bound $|C\cap S|\le \tfrac{1}{5}\Delta$ holds since $S$ is a transversal and $C\cap S\subseteq N(z)$ for any vertex $z\in C\setminus S$.
\end{itemize}
Let $I=\bigl(V(G)\setminus \bigl(S\cup \bigcup \DD\bigr)\bigr)\cup \CC$ and let $\EE=\{A_x:x\in I\}$ be the set of all events discussed above.
For each $x\in I$, let $\EE_{A_x}=\{A_y:y\in I,B_y\cap B_x\neq\emptyset\}$.
Note that for each $x\in I$, we have $|\EE_{A_x}|<\Delta^2$; indeed, the set $\EE_{A_x}$ contains
\begin{itemize}
\item events $A_v$ for at most $|B_x|\cdot \Delta\le \tfrac{1}{4}\Delta^2$ vertices $v\in V(G)\setminus \bigl(S\cup \bigcup \DD\bigr)$ with a neighbor in $B_x$, and
\item events $A_C$ for at most $|B_x|+|B_x|\cdot \Delta\le \tfrac{1}{4}\Delta(\Delta+1)$ cores $C\in\CC$ such that
either a vertex of $B_x$ or a neighbor of a vertex of $B_x$ belongs to $C$.
\end{itemize}
Consider any set $\EE'\subseteq \EE\setminus \EE_{A_x}$ and note that the validity of the events in $\EE'$ only depends on the colors of the vertices in $S\setminus B_x$.
Let $\Phi$ be the set of all $(L,\Pi)$-colorings $\sigma$ of $G[S\setminus B_x]$ for which none of the events in $\EE'$ occur for the $(L,\Pi)$-colorings of $G[S]$ that extend $\sigma$.
Thus
$$\sum_{\sigma\in \Phi} \Pr{\text{$\psi$ extends $\sigma$}\Bigm|\bigwedge_{A\in \EE'} \lnot A}=1,$$
and
\begin{align*}
\Pr{A_x\Bigm|\bigwedge_{y\in \EE'} \lnot A_y}&=\sum_{\sigma\in \Phi} \Pr{A_x\mid\text{$\psi$ extends $\sigma$}}\cdot \Pr{\text{$\psi$ extends $\sigma$}\Bigm|\bigwedge_{y\in \EE'} \lnot A_y}\\
   &\le  \frac{1}{4\Delta^2} \cdot \sum_{\sigma\in \Phi} \Pr{\text{$\psi$ extends $\sigma$}\Bigm|\bigwedge_{A\in \EE'} \lnot A}= \frac{1}{4\Delta^2}.
\end{align*}
By  the Lovász Local Lemma (Theorem~\ref{tool__lovasz_local_lemma}), with non-zero probability none of the events in $\EE$ holds.
Let us fix an $(L,\Pi)$-coloring~$\psi$ of $G[S]$ with this property.
It is now easy to see that $\psi$ extends to an $(L,\Pi)$-coloring of $G$.
Indeed, let $\{X_1, \ldots, X_t\}$ be the partition of $V(G)\setminus S$ such that
for each $i\in[t]$, either $X_i=\{v\}$ for a vertex $v\in V(G)\setminus \bigl(S\cup \bigcup \DD\bigr)$,
or $X_i=D\setminus S$ for a near-clique $D\in \DD$.
Let $\varphi_0=\psi$.  For each $i\in [t]$, we extend $\varphi_{i-1}$ to a partial $(L,\Pi)$-coloring $\varphi_i$ by giving colors to the vertices in $X_i$
as follows:
\begin{itemize}
\item If $X_i=\{v\}$ for a vertex $v\in V(G)\setminus \bigl(S\cup \bigcup \DD\bigr)$, then since $A_v$ is false,
we have $\exc_{\varphi_{i-1}}(v)\ge \exc_{\psi}(v)>0$.  Therefore, we can extend $\varphi_{i-1}$ to the vertex $v$ greedily.
\item Suppose now that $X_i=D\setminus S$ for a near-clique $D\in\DD$ with core $C\in \CC$.  Recall that $\delta(G[D])\ge \tfrac{3}{4}\Delta$.
If there exists a vertex $u\in D\setminus (C\cup S)$,
then let $\varphi'_{i-1}$ be a partial $(L,\Pi)$ coloring obtained from $\varphi_{i-1}$ by coloring $u$ greedily.  This is possible,
since $u$ has at least $\tfrac{3}{4}\Delta-|N(u)\cap S|\ge \tfrac{3}{4}\Delta-\tfrac{1}{5}\Delta\ge 2$ neighbors in $C\setminus S$ that
have not been colored yet.  Otherwise, let $\varphi'_{i-1}=\varphi_{i-1}$.

Since $A_C$ is false, the set $C\setminus S$ contains distinct vertices $v_1$ and $v_2$ with positive $\psi$-excess,
and thus also with positive $\varphi'_{i-1}$-excess. Therefore, we can extend $\varphi'_{i-1}$ to $C\setminus S$ by Observation~\ref{obs__greedy}, coloring $v_1$ and $v_2$ last.
\end{itemize}
This way, we obtain an $(L,\Pi)$-coloring $\varphi_t$ of $G$.  This contradicts the claim that $(G,L,\Pi)$ is a $\Delta$-counterexample.
\end{proof}

%===================================================================

\subsubsection*{Open access statement}%\mbox{}\\*
For the purpose of open access, a CC BY public copyright license is applied to any Author Accepted Manuscript (AAM) arising from this submission.

\end{document}